\tikzset{
	>=stealth',
	punktchain/.style={
		rectangle,
		rounded corners,
		draw=black, thick,
		minimum height=3em,
		text centered,
		on chain},
	line/.style={draw, thick, <-},
	element/.style={
		tape,
		top color=white,
		bottom color=blue!50!black!60!,
		minimum width=8em,
		draw=blue!40!black!90, very thick,
		text width=10em,
		minimum height=3.5em,
		text centered,
		on chain},
	every join/.style={->, thick,shorten >=1pt},
	decoration={brace},
	tuborg/.style={decorate},
	tubnode/.style={midway, right=2pt},
}
\def\C{\ensuremath{\mathbb{C}}}
\def\H{\ensuremath{\mathbb{H}}}
\def\Q{\ensuremath{\mathbb{Q}}}
\def\R{\ensuremath{\mathbb{R}}}
\def\Z{\ensuremath{\mathbb{Z}}}
\def\Alb{\mathop{\mathrm{Alb}}\nolimits}
\def\Aut{\mathop{\mathrm{Aut}}\nolimits}
\def\ch{\mathop{\mathrm{ch}}\nolimits}
\def\Coh{\mathop{\mathrm{Coh}}\nolimits}
\def\dim{\mathop{\mathrm{dim}}\nolimits}
\def\inf{\mathop{\mathrm{inf}}\nolimits}
\def\GL{\mathop{\mathrm{GL}}\nolimits}
\def\Hom{\mathop{\mathrm{Hom}}\nolimits}
\def\id{\mathop{\mathrm{id}}\nolimits}
\def\Ker{\mathop{\mathrm{Ker}}\nolimits}
\def\Kn{\mathop{\mathrm{K}_{\mathrm{num}}}}
\def\min{\mathop{\mathrm{min}}\nolimits}
\def\Pic{\mathop{\mathrm{Pic}}}
\def\rk{\mathop{\mathrm{rk}}}
\def\Stab{\mathop{\mathrm{Stab}}\nolimits}
\def\Db{\mathrm{D}^{b}}
\def\glt{\widetilde{\mathrm{GL}}^+_2(\R)}
\def\sbb{\sigma^{a,b}_{\alpha,\beta}}
\def\abs#1{\left\lvert#1\right\rvert}
\newtheorem*{rep@theorem}{\rep@title}
\newcommand{\newreptheorem}[2]{%
\newenvironment{rep#1}[1]{%
 \def\rep@title{#2 \ref{##1}}%
 \begin{rep@theorem}}%
 {\end{rep@theorem}}}
\newtheorem{Thm}{Theorem}[section]
\newtheorem{Prop}[Thm]{Proposition}
\newtheorem{Lem}[Thm]{Lemma}
\newtheorem{Cor}[Thm]{Corollary}
\newtheorem{Ques}[Thm]{Question}
\newtheorem{thm-int}{Theorem}
\theoremstyle{definition}
\newtheorem{Def-s}[Thm]{Definition}
\newtheorem{Def}[Thm]{Definition}
\newtheorem{Rem}[Thm]{Remark}
\newtheorem{Not}[Thm]{Notation}
\def\C{\ensuremath{\mathbb{C}}}
\def\H{\ensuremath{\mathbb{H}}}
\def\Q{\ensuremath{\mathbb{Q}}}
\def\R{\ensuremath{\mathbb{R}}}
\def\Z{\ensuremath{\mathbb{Z}}}
\def\cA{\ensuremath{\mathcal A}}
\def\cC{\ensuremath{\mathcal C}}
\def\cD{\ensuremath{\mathcal D}}
\def\cE{\ensuremath{\mathcal E}}
\def\cF{\ensuremath{\mathcal F}}
\def\cH{\ensuremath{\mathcal H}}
\def\cL{\ensuremath{\mathcal L}}
\def\cO{\ensuremath{\mathcal O}}
\def\cP{\ensuremath{\mathcal P}}
\def\cT{\ensuremath{\mathcal T}}
\def\nes{N\'eron--Severi }
\begin{document}

\title[Stability manifolds of varieties with finite Albanese morphisms]{Stability manifolds of varieties with finite Albanese morphisms}
\author{Lie Fu}
\address{L. F.:
Institute for Mathematics, Astrophysics and Particle Physics (IMAPP), Radboud University, PO Box 9010, 6500 GL, Nijmegen, Netherlands.}

\email{lie.fu@math.ru.nl}
\urladdr{https://www.math.ru.nl/$\sim$liefu/}

\author{Chunyi Li}
\address{C. L.:
Mathematics Institute, University of Warwick,
Coventry, CV4 7AL,
United Kingdom}
\email{C.Li.25@warwick.ac.uk}
\urladdr{https://sites.google.com/site/chunyili0401/}

\author{Xiaolei Zhao}
\address{X. Z.:
Department of Mathematics \\
South Hall 6607 \\
University of California \\
Santa Barbara, CA 93106, USA
}
\email{xlzhao@math.ucsb.edu}
\urladdr{https://sites.google.com/site/xiaoleizhaoswebsite/}

\keywords{Bridgeland stability conditions, irregular surfaces, Albanese morphism, abelian threefolds}
\subjclass[2020]{14F08, 14K05, 14J60, 18G80}

\begin{abstract} 
For a smooth projective complex variety whose Albanese morphism is finite, we show that every Bridgeland stability condition on its bounded derived category of coherent sheaves is geometric, in the sense that all skyscraper sheaves are stable with the same phase. Furthermore, we describe the stability manifolds of irregular surfaces and abelian threefolds with \nes rank one, and show that they are connected and contractible.
\end{abstract}
\date{\today}

\maketitle

\setcounter{tocdepth}{1}
\tableofcontents

\maketitle

\section{Introduction}

Let $X$ be a smooth projective variety over the field of complex numbers $\C$. Denote by $\Db(X)$ the bounded derived category of coherent sheaves on $X$. The notion of stability conditions on triangulated categories was introduced by Bridgeland in \cite{Bridgeland:Stab} (see Section \ref{subsect:GeneralityStabCond} for a recap).
Let $\Stab(X)$ be
the set of (full locally-finite numerical) stability conditions on $\Db(X)$. By the seminal result in \cite{Bridgeland:Stab}, $\Stab(X)$ is naturally endowed with a structure of complex manifold with local coordinates given by the central charge. We call $\Stab(X)$ the \emph{stability manifold} of $X$.

A complete description of the stability manifold has been worked out only for curves and abelian surfaces. More precisely, 
\begin{itemize}
    \item $\Stab(\mathbf{P}^1)\cong \C^2$,  see \cite{Okada:P1}.
    \item $\Stab(C)\cong \C\times \H$ for any smooth projective curve $C$ of genus $\geq 1$,  see \cite{Bridgeland:Stab,Macri:curves}; here $\H$ denotes the upper half-plane. 
    \item Abelian surfaces, see \cite{Bridgeland:K3,HMS:generic_K3s}.
\end{itemize}

In any of the following cases, a principal connected component of the stability manifold is determined, which is expected to be the whole stability manifold:
\begin{itemize}
  \item K3 surfaces with Picard rank one, see \cite{Bridgeland:K3,K3Pic1}.
  \item The projective plane $\mathbf P^2$, see \cite{P2stab}.
  \item Abelian threefolds with \nes rank one, see \cite{Dulip-Antony:I,Dulip-Antony:II, BMS:stabCY3s}.
\end{itemize}
Moreover, in each case above, the stability manifold (or the principal component) is simply connected and contains an open subset consisting of the so-called \textit{geometric} stability conditions, meaning that all skyscraper sheaves are stable with the same phase (see Definition \ref{def:geostab}). Note that the condition of equal phase turns out to be automatic, see Proposition \ref{prop:skyscrapersamephase}.

In this paper, we provide more instances of algebraic varieties whose stability manifolds have the same feature as the aforementioned examples. Our first main result reads as follows:

\begin{Thm}[{Corollary \ref{cor:albfiniteimplygeom}}]\label{thm:main-1}
Let $X$ be a connected smooth projective variety over $\C$. If the Albanese morphism of $X$ is finite, then every numerical stability condition on $\Db(X)$ is geometric.
\end{Thm}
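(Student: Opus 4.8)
The plan is to reduce the statement to the claim that every skyscraper sheaf $\cO_x$ is $\sigma$-stable; the coincidence of phases is then automatic by Proposition \ref{prop:skyscrapersamephase}. Write $a\colon X\to A:=\Alb(X)$ for the (finite) Albanese morphism. The first observation is that $\sigma$ is invariant under the autoequivalences $T_L:=(-)\otimes a^{*}L$ for $L\in\Pic^{0}(A)$. Indeed $a^{*}L$ is numerically trivial, so $T_L$ acts as the identity on $\Kn(X)$ and hence fixes the central charge $Z$; since $\Pic^{0}(A)$ is connected and the map $\Stab(X)\to\Hom(\Kn(X),\C)$ sending a stability condition to its central charge is a local homeomorphism (Bridgeland's deformation theorem), the orbit map $L\mapsto T_{L}\cdot\sigma$ takes values in the discrete fibre over $Z$ and is therefore constant. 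Thus $T_{L}\cdot\sigma=\sigma$ for all $L\in\Pic^{0}(A)$.

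The key structural input is a characterization of the $T_{L}$-invariant objects: an object $E\in\Db(X)$ satisfies $E\otimes a^{*}L\cong E$ for all $L\in\Pic^{0}(A)$ if and only if $E$ has $0$-dimensional support. One direction is clear, since a line bundle is locally trivial near a point, so finite-support objects are automatically invariant. For the converse I would push forward along the finite morphism $a$: by the projection formula $a_{*}E\otimes L\cong a_{*}E$ for all $L\in\Pic^{0}(A)=\hat A$, and applying the Fourier--Mukai equivalence $\Db(A)\xrightarrow{\sim}\Db(\hat A)$ with Poincar\'e kernel turns tensoring by $L$ into translation by the point $L\in\hat A$. Hence the transform of $a_{*}E$ is translation-invariant on $\hat A$; its cohomology sheaves are translation-invariant, so they are locally free homogeneous bundles (their non-locally-free locus and their support are translation-invariant closed subsets, hence empty or everything). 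Transforming back, a homogeneous bundle is an iterated extension of degree-zero line bundles, which go to shifted skyscrapers, so $a_{*}E$—and therefore $E$—has finite support. In particular the numerical class of any invariant object is an integer multiple of $\pt$.

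Now fix $x\in X$ and suppose $\cO_x$ is not $\sigma$-stable. Because $\sigma$ is $T_L$-invariant and $\cO_x\otimes a^{*}L\cong\cO_x$, uniqueness of Harder--Narasimhan filtrations shows that each HN factor of $\cO_x$ is $T_L$-invariant; the same connectedness argument applied to the finite set of stable factors shows the Jordan--H\"older factors are $T_L$-invariant as well. By the previous step every such factor has finite support. Grouping the factors according to the finitely many points of their supports and using that $\Hom^{\bullet}(\cO_{y},\cO_{y'})=0$ for $y\neq y'$, the filtration splits off a direct summand for each point; since $\cO_x$ is indecomposable with support $\{x\}$, all factors are in fact supported at the single point $x$. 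Consequently the entire HN/JH filtration lives in $\cD_x:=\Db_{\{x\}}(X)$, on which $\sigma$ restricts to a stability condition whose central charge takes values in the single line $\Z\cdot Z(\cO_x)$. As this image is discrete, the corresponding heart is of finite length, and because $\Kn(\cD_x)=\Z\cdot\pt$ has rank one it possesses a unique simple object $\mathbf S$, with $[\mathbf S]=\pm\pt$.

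It then remains to identify $\cO_x$ with a shift of $\mathbf S$, which finishes the proof: a shift of the unique simple object is $\sigma$-stable in $\cD_x$, and since any $\sigma$-destabilizing subobject of $\cO_x$ in $\Db(X)$ is itself invariant, hence lies in $\cD_x$, stability in $\cD_x$ upgrades to stability in $\Db(X)$. I expect this last, purely local, step to be the main obstacle. Using the equivalence of $\cD_x$ with the derived category of finite-length modules over $\widehat{\cO}_{X,x}\cong\C[[t_1,\dots,t_d]]$ (with $d=\dim X$), one must show that an object with $\End=\C$, no negative self-extensions (the defining properties of the simple object of a bounded heart), and class $\pm\pt$ is necessarily a shift of the residue field $\cO_x$. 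This is to be handled by a careful analysis of the local cohomology together with the rigidity $\Hom(\cO_x,\cO_x)=\C$ and $\Ext^{<0}(\cO_x,\cO_x)=0$ of the skyscraper. Granting it, $\cO_x\cong\mathbf S[k]$ is $\sigma$-stable, completing the reduction and hence the theorem.
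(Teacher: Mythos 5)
Your first half is the paper's own argument: the triviality of the $\Pic^0$-action on $\Stab(X)$ is exactly \cite[Corollary 3.5.2]{Polishchuk:families-of-t-structures}, which the paper cites at the same spot, and your Fourier--Mukai characterization of $\Pic^0$-invariant objects as objects with finite support, transported through the finite Albanese morphism via the projection formula, is precisely Lemma \ref{lem:FiniteSupport} combined with the push-forward step in Theorem \ref{thm:alwaysgeostab}. One caveat on your sketch of the invariance: the ``connected group, discrete fibre, hence constant orbit map'' argument needs the continuity of $L\mapsto T_L\cdot\sigma$ in the metric topology on $\Stab(X)$, i.e.\ uniform control of $\phi^{\pm}(E\otimes a^*L)$ over all $E$ simultaneously; this is not formal and is in effect the content of Polishchuk's theorem, so you should cite it rather than rederive it. Similarly, ``any $\sigma$-destabilizing subobject of $\cO_x$ is itself invariant'' is not justified: an arbitrary subobject of an invariant object need not be invariant; only the HN filtration and the multiset of Jordan--H\"older factors are canonically attached and hence invariant (and upgrading ``each factor is permuted'' to ``each factor is fixed'' uses a closed-and-open argument on $\Pic^0$, which both you and the paper elide harmlessly).

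The genuine gap is your endgame, and you have flagged part of it yourself. First, the inference ``$\Kn(\cD_x)=\Z\cdot\pt$ has rank one, hence the finite-length heart has a unique simple object $\mathbf S$'' is a non sequitur: the number of simple objects of a finite-length heart is not bounded by the rank of the numerical Grothendieck group (distinct simples may well have proportional, even equal, classes $m_i\pt$), and you give no argument excluding this. Second, the ``purely local'' classification --- that an object of $\Db_{\{x\}}(X)\simeq\Db$ of finite-length modules over $\C[[t_1,\dots,t_d]]$ with $\End=\C$, no negative self-extensions, and class $\pm\pt$ is a shift of the residue field --- is left entirely open, and it is exactly where the difficulty sits; nothing in your text reduces it to the rigidity properties you list. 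The paper avoids the whole local analysis with Lemma \ref{lem:arend}: if $\cO_p$ is not stable, there is a stable object $E\neq\cO_p$ supported at $p$ (a Jordan--H\"older factor, using your own splitting-by-support-points observation), and with $k,l$ the maximal and minimal nonvanishing cohomological degrees of $E$ one forms the composite
\begin{equation*}
E \longrightarrow \cH^k(E)[-k] \longrightarrow \cO_p[-k] \longrightarrow \cH^l(E)[-k] \longrightarrow E[l-k],
\end{equation*}
where the two middle arrows exist and are nonzero because a sheaf of finite length at $p$ surjects onto and receives an injection from $\cO_p$; the composite is nonzero since it is nonzero on the extreme cohomology, so stability of $E$ together with $\Hom(E,E[<0])=0$ forces $k=l$ and $E\cong\cO_p[-k]$, a contradiction. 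You should replace your last two steps (unique simple plus local classification) by this direct argument, or else actually prove both of your unproven claims --- as written, the proof is incomplete at its decisive point.
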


Here we briefly recall some examples of varieties with finite Albanese morphisms. The most basic ones are abelian varieties and curves of genus $\geq 1$. To produce new examples out of old ones, observe that this property is stable under products and is inherited by subvarieties and finite ramified covers. See \cite{Kawamata-AV} for more on the structure of a variety with finite Albanese morphism.

Also, note that a smooth projective surface $S$ of \nes rank one has finite Albanese morphism if and only if it is \textit{irregular}, that is, $q(S):=\dim H^1(S, \mathcal{O}_S)\neq 0$. One expects that a generic minimal surface of irregularity $q\geq 2$ is of this type. Here are some geometrically interesting examples: the Fano surface of lines on a generic cubic threefold (\cite{Roulleau:fanosurface}), the Fano surface of planes on a generic cubic fivefold.

\begin{Thm}[Corollary \ref{cor:irregsurf}]
\label{thm:main-2}
The stability manifold of an irregular surface of \nes rank one is connected and contractible. 
\end{Thm}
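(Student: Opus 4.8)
The plan is to reduce the statement to an explicit parametrization of the geometric locus and then invoke Theorem~\ref{thm:main-1}. Since $S$ is irregular of Picard rank one, its Albanese morphism is finite, so by Corollary~\ref{cor:albfiniteimplygeom} \emph{every} numerical stability condition on $\Db(S)$ is geometric. Hence it suffices to prove that the locus of geometric stability conditions --- which is now all of $\Stab(S)$ --- is homeomorphic to a product of contractible spaces.

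First I would recall the standard family of tilt-stability conditions. Let $H$ be the ample generator of $\NS(S)\cong\Z$. For $\alpha>0$ and $\beta\in\R$, tilting $\Coh(S)$ at the $H$-slope $\beta$ produces a heart $\Coh^\beta(S)$, and together with the central charge $\Zab(E)=-\int_S e^{-(\beta+i\alpha)H}\ch(E)$ this yields a geometric stability condition $\sab$ depending continuously on $(\alpha,\beta)$. Combined with the free $\glt$-action, one obtains a continuous map
\[
\Theta\colon \glt\times\H\longrightarrow\Stab(S),\qquad (g,\beta+i\alpha)\longmapsto g\cdot\sab,
\]
where $\H=\{\beta+i\alpha : \alpha>0\}$. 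The heart of the argument is to show that $\Theta$ is a homeomorphism onto the geometric locus. Injectivity uses that $\glt$ acts freely and that distinct parameters $(\alpha,\beta)$ lie in distinct $\glt$-orbits, which can be read off from the normalized central charge. Since both $\glt\times\H$ and $\Stab(S)$ are real manifolds of dimension $6$, invariance of domain then shows that $\Theta$ is open, so it remains only to prove surjectivity onto the geometric locus.

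This surjectivity is the main obstacle. Given an arbitrary geometric $\sigma$, I would first use the $\glt$-action to normalize the skyscraper sheaves to have phase $1$ and $Z(\mathcal{O}_x)=-1$; a positivity and support-property analysis then forces the central charge to equal $\Zab$ for some $\alpha>0$, $\beta\in\R$. The genuinely delicate point is to identify the heart of $\sigma$ with the tilted heart $\Coh^\beta(S)$: one must reconstruct the $t$-structure from the stability of points together with the location of (shifts of) sheaves, exactly as in the classification of the geometric chamber for K3 surfaces and $\mathbf{P}^2$. What makes this work globally here --- with no boundary walls and no need to pass to a single distinguished component --- is precisely the input of Theorem~\ref{thm:main-1}: the absence of any non-geometric stability condition means there are no spherical or exceptional objects creating point-like destabilizing walls, so the identification of the heart is unobstructed across the entire parameter space.

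Finally I would conclude. By Corollary~\ref{cor:albfiniteimplygeom} the geometric locus is all of $\Stab(S)$, so $\Theta$ is a homeomorphism $\glt\times\H\xrightarrow{\sim}\Stab(S)$. The upper half-plane $\H$ is convex, hence contractible, and $\glt$, being the universal cover of $\mathrm{GL}^+_2(\R)\simeq S^1$, is contractible as well. Therefore $\Stab(S)$ is homeomorphic to a product of two contractible spaces, and is in particular connected and contractible, as claimed.
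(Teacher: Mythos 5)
Your overall strategy (all stability conditions are geometric by Corollary~\ref{cor:albfiniteimplygeom}, then parametrize the geometric locus and contract) is the paper's strategy, but your parametrization of the geometric locus contains a genuine gap: the surjectivity of your map $\Theta\colon \glt\times\H\to\Stab(S)$ is false in general. With your normalization $Z(E)=-\int_S e^{-(\beta+i\alpha)H}\ch(E)$, the kernel of the central charge on $\Lambda_H\otimes\R$ is spanned by $\bigl(1,\beta,\tfrac{\beta^2+\alpha^2}{2}\bigr)$ in the coordinates $(H^2\rk, H\ch_1,\ch_2)$, and since the $\glt$-action preserves this kernel line, the image of $\Theta$ only realizes kernels $(1,\beta,c)$ with $c>\beta^2/2$. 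However, the correct existence threshold is not the parabola $c=\beta^2/2$ but the Le Potier function: the pair $(\Coh^\beta(S),Z_{\alpha,\beta})$ is a stability condition for all $\alpha>\Phi_{S,H}(\beta)$ (Theorem~\ref{thm:tiltfamily} and Remark~\ref{rem:familygeo}), where $\Phi_{S,H}(\beta)\leq \beta^2/2$ with equality only in special cases such as abelian surfaces. For a general irregular surface of Picard rank one (e.g.\ the Fano surface of a cubic threefold) one expects $\Phi_{S,H}(\beta)<\beta^2/2$ at many $\beta$, and then there exist geometric stability conditions with normalized kernel $(1,\beta,c)$, $\Phi_{S,H}(\beta)<c\leq \beta^2/2$, which lie in no $\glt$-orbit of your family. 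So your step ``a positivity and support-property analysis then forces the central charge to equal $\Zab$ for some $\alpha>0$'' fails: positivity on skyscrapers and torsion sheaves only pins the central charge down to the form $-\ch_2+aH\ch_1+bH^2\rk+i(cH\ch_1+dH^2\rk)$ with $c>0$, and excluding the region at or below the Le Potier curve requires the sharper argument of Proposition~\ref{prop:geomstabform}: deform the central charge, and use the existence of an $H$-semistable sheaf $F$ with $H\ch_1(F)=\beta_0H^2\rk(F)$ and $\ch_2(F)>\alpha_0H^2\rk(F)$ to produce $F[1]$ in the tilted heart with $Z_{\alpha_0,\beta_0}(F[1])\in\R_{>0}$, a contradiction.

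The conclusion survives this correction, but your final homeomorphism must read $\Stab(S)\cong\glt$-bundle over $\{(\alpha,\beta)\in\R^2\mid \alpha>\Phi_{S,H}(\beta)\}$ (Corollary~\ref{cor:contractible}), not $\glt\times\H$; the region above the graph of the upper semi-continuous function $\Phi_{S,H}$ is still contractible (flow vertically upward), so connectedness and contractibility follow as you intend. Two smaller points: your invariance-of-domain argument for openness is an unnecessary detour --- openness comes from Theorem~\ref{thm:spaceasamfd} together with openness of stability of the skyscrapers --- and your heuristic that geometricity of all stability conditions makes the heart identification ``unobstructed'' is not where the work lies; the heart identification $\cP((0,1])=\Coh^\beta(S)$ holds for \emph{every} geometric stability condition with normalized central charge (it is the central-charge region, not the heart, that needs the Le Potier input).
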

One should compare Theorem \ref{thm:main-2} with the case of abelian surfaces: as the derived category of an abelian surface has no spherical objects (see \cite[Lemma 15.1]{Bridgeland:K3}), \cite[Theorem 1]{HMS:generic_K3s} says in particular that the stability manifold of an abelian surface is connected and simply connected. 

\vspace{0.5cm}

Finally, we establish the following result on abelian threefolds.
Recall that for an $n$-dimensional polarized smooth projective variety $(X,H)$, $\Stab_H(X)$ denotes the stability manifold with respect to the surjection $\mathrm{K}(X)\twoheadrightarrow \Lambda_H$, where $\Lambda_H$ is the image of the map $ \mathrm{K}(X)\to \mathbb{R}^n$ sending a class $[E]$ to the vector $(H^n\rk(E),H^{n-1}\ch_1(E),H^{n-2}\ch_2(E),\dots,\ch_n(E))$.

\begin{Thm}[Corollary \ref{cor:ab3space}]
\label{thm:main-3}
Let $(A,H)$ be a polarized abelian threefold, then $\Stab_H(A)=\tilde {\mathfrak P}$ as that constructed in \cite[Theorem 9.1]{BMS:stabCY3s}. In particular, when $A$ has \nes rank one, $\tilde {\mathfrak P}$ is the whole stability manifold and it is connected and contractible.
\end{Thm}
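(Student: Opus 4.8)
The plan is to deduce the whole statement from Theorem \ref{thm:main-1} together with the explicit description of $\tilde{\mathfrak P}$ in \cite[Theorem 9.1]{BMS:stabCY3s}. Since $A$ is an abelian variety, its Albanese morphism is an isomorphism, hence finite, so Theorem \ref{thm:main-1} applies and tells us that \emph{every} numerical stability condition on $\Db(A)$ is geometric; by Proposition \ref{prop:skyscrapersamephase} the skyscraper sheaves are then stable of a common phase, which after acting by $\glt$ we may normalize to be $1$ with $Z(\mathcal{O}_x)=-1$. Recall that $\Stab_H(A)$ denotes the stability conditions whose central charge factors through the rank-four lattice of $H$-numerical invariants (generated at the level of algebraic cohomology by $1,H,H^2,H^3$); when $A$ has Picard rank one this lattice is all of $\Kn(A)$, so $\Stab_H(A)=\Stab(A)$, and the ``in particular'' clause will follow from the general identity. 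The component $\tilde{\mathfrak P}$ is, by construction, the $\glt$-orbit of the double-tilted family $\sab$, and every member of it is geometric.

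First I would prove the set-theoretic equality $\Stab_H(A)=\tilde{\mathfrak P}$ through the containment chain $\tilde{\mathfrak P}\subseteq \Stab_H(A)=\mathrm{Geo}_H\subseteq \tilde{\mathfrak P}$, where $\mathrm{Geo}_H$ is the geometric locus inside $\Stab_H(A)$. The first inclusion is \cite{BMS:stabCY3s}, the middle equality is exactly Theorem \ref{thm:main-1}, and the content lies in the last inclusion. For that I would classify geometric stability conditions: after the normalization above, the heart $\cP((0,1])$ contains all skyscrapers, and one shows---using stability of twisted line bundles, the boundedness forced by the support property, and the Bogomolov--Gieseker inequality established for $A$ in \cite{BMS:stabCY3s}---that this heart must be one of the double tilts of $\Coh(A)$ underlying $\sab$, and that the central charge must then be the corresponding $\Zab$. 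Hence such a $\sigma$ is a $\glt$-translate of a member of $\sab$, i.e.\ $\sigma\in\tilde{\mathfrak P}$. Since $\tilde{\mathfrak P}$ is moreover a connected component of $\Stab_H(A)$ by \cite{BMS:stabCY3s}, this also re-proves that there are no exotic components.

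It remains to establish connectedness and contractibility. I would use the parametrization from \cite[Theorem 9.1]{BMS:stabCY3s} to exhibit $\tilde{\mathfrak P}$ as foliated by $\glt$-orbits over a contractible reduced parameter domain $R$ cut out by the positivity and Bogomolov--Gieseker inequalities. Concretely, the $\glt$-action on $\Stab(A)$ is free, and I would check that the BMS data furnish a global continuous slice transverse to the orbits, so that the orbit map gives a homeomorphism $\glt\times R\xrightarrow{\ \sim\ }\tilde{\mathfrak P}$. As $\glt$ is contractible---being the universal cover of $\mathrm{GL}^+_2(\R)$, which is homotopy equivalent to $S^1$---and $R$ is contractible, the product, and therefore $\tilde{\mathfrak P}$, is connected and contractible.

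The main obstacle is the classification step in the second paragraph: upgrading the bare geometricity supplied by Theorem \ref{thm:main-1} into agreement of the \emph{entire} heart and central charge with the BMS data. Geometricity constrains only the skyscrapers, whereas one must exclude every exotic tilt of $\Coh(A)$ admitting them as stable objects; this is precisely where the threefold-specific input (the second tilt and the Bogomolov--Gieseker inequality for abelian threefolds) is indispensable, in contrast with the surface situation of Theorem \ref{thm:main-2}. A secondary, more technical point is verifying that the BMS family is a genuine global slice: freeness of the $\glt$-action is standard, but transversality and continuity of the induced inverse map, together with the identification of $R$ with an explicit contractible region, require care.
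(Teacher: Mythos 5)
Your outer frame (geometricity from Theorem \ref{thm:main-1}, then classify the geometric locus, then read off the topology from the $\glt$-slice of \cite[Theorem 9.1]{BMS:stabCY3s}) matches the paper's, but the proposal has a genuine gap exactly at the step you yourself flag as the main obstacle: the claim that a geometric stability condition's heart ``must be one of the double tilts of $\Coh(A)$'' is asserted, not proved, and the tools you name (stability of twisted line bundles, support-property boundedness, the Bogomolov--Gieseker inequality) would not suffice. The paper itself points out, in the Further Questions section, that on $\mathbf{P}^3$ there exist distinct \emph{geometric} stability conditions with the \emph{same} central charge; so geometricity plus agreement of $Z$ with $\zbb$ cannot pin down the heart by any general argument, and an abelian-specific input is indispensable. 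Twisted line bundles $\cO(mH)$ are also too sparse: the arguments that force the central charge into the form $\zbb$ and locate the parameter $\beta$ (e.g.\ squeezing $\beta'=\beta$ in the proof of Theorem \ref{thm:ab3space}) need stable objects of slope $\frac{p}{q}$ for \emph{all} rationals, which are the higher-rank simple semi-homogeneous bundles $E_{p/q}$, and their stability under every numerical stability condition is itself a nontrivial theorem (Corollary \ref{cor:EpqStable}), proved by transporting Theorem \ref{thm:alwaysgeostab} through the Fourier--Mukai equivalence $\Db(A)\simeq\Db(A^\vee)$ whose kernel is the universal semi-homogeneous family. Your proposal never invokes this, so the supply of stable test objects it works with is inadequate both for determining $Z$ and for controlling the heart.

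Moreover, the paper does not in fact classify hearts directly, which is what your containment chain requires; it compares \emph{slicings metrically}. Having matched the central charge with $\zbb$ and matched the phases of all objects of $\cC$ (skyscrapers and the $E_s[t]$), it proves Proposition \ref{prop:ab3phasebound}: every $\sbb$-stable object receives or emits a nonzero morphism from some $E\in\cC$ with phase gap at most $1$, essentially because $\chi(E_s,F)=\rk(E_s)\ch_3^s(F)$ and the cubic $f(x)$ has three real roots $s_1<s_2<s_3$ with alternating signs of $\ch_3^{\beta+s_i}(F)$ (Lemma \ref{lem:ch3sF}). This yields $\phi^\pm_\sigma(F)$ within $1$ of $\phi_{\sbb}(F)$ for all $\sbb$-stable $F$, whence $d(\sigma,\sbb)\le 1$ by Lemma \ref{lem:distsym}, and then Lemma \ref{lem:samecentralcharge} upgrades ``same central charge and $d\le 1$'' to $\sigma=\sbb$. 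None of these three ingredients appears in your proposal, and without them (or a worked-out substitute excluding exotic hearts) the second inclusion $\mathrm{Geo}_H\subseteq\tilde{\mathfrak P}$ remains unproved. The topological paragraph is fine as far as it goes, since \cite[Theorem 9.1]{BMS:stabCY3s} already provides $\mathrm{im}(\Sigma)$ as a slice of the $\glt$-action over the contractible region $\{\alpha>0,\ a>\frac{\alpha^2}{6}+\frac{1}{2}\abs{b}\alpha\}$, so no new transversality verification is needed there.
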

This completes the result of \cite{Dulip-Antony:I,Dulip-Antony:II, BMS:stabCY3s} on the stability manifold of abelian threefolds of \nes rank one.

\subsection*{Sketch of the proof} Theorem \ref{thm:main-1} mainly relies on the result in \cite{Polishchuk:families-of-t-structures} and the uniqueness of Harder--Narasimhan filtration. By \cite[Corollary 3.5.2]{Polishchuk:families-of-t-structures}, the natural action of the group Pic$^0(X)$ on the stability manifold is trivial. Hence the Harder--Narasimhan factors of every Pic$^0(X)$-invariant object, e.g. a skyscraper sheaf, must also be Pic$^0(X)$-invariant. It follows by the assumption on the Albanese morphism that such factors of a skyscraper sheaf may only be  skyscraper sheaves. In particular, all skyscraper sheaves are stable.
On the other hand, we prove in general that all skyscraper sheaves are of the same phase when they are all stable, see Proposition \ref{prop:skyscrapersamephase}. This seemingly-simple statement is probably known to some experts, but we found no proof in the literature. 

Theorem \ref{thm:main-2} on irregular surfaces then follows from the same observation as that in \cite[Section 10]{Bridgeland:K3}, which implies that every geometric stability condition can be constructed via tilting heart up to the $\glt$-action. 

The proof of Theorem \ref{thm:main-3} on abelian threefolds is more involved. First, for any abelian variety $A$, using a variant of the Fourier--Mukai transform between  $\Db(A)$ and $\Db(A^\vee)$, we show that all simple semi-homogeneous vector bundles are stable with respect to all stability conditions, see Corollary \ref{cor:EpqStable}. Second, specializing to abelian threefolds, we show that every stability condition is with the same central charge as that constructed in \cite{BMS:stabCY3s}. The actual difficulty of the whole argument is to prove that their heart structures are also the same. Proposition \ref{prop:ab3phasebound}, Lemma \ref{lem:distsym} and Lemma \ref{lem:samecentralcharge} are to deal with this issue. Roughly speaking, we show that there are enough morphisms from simple semi-homogeneous vector bundles to other stable objects so that a stability condition can be uniquely determined by the phases of all semi-homogeneous vector bundles and the central charge.

\subsection*{Acknowledgment.} We are very grateful to Arend Bayer,  Emanuele Macr\`i, and Paolo Stellari for pointing out references and enlightening comments. In particular, the shortcut proof of Lemma \ref{lem:arend} is due to Arend Bayer. We also thank the referees for helpful comments and references. L.~F.~ is supported by the Agence Nationale de la Recherche (ANR) under project numbers ANR-20-CE40-0023 and ANR-16-CE40-0011, he was also supported by the Radboud Excellence Initiative program. C.~L.~ is a University Research Fellow supported by the Royal Society URF$\backslash$R1$\backslash$201129 “Stability condition and application in algebraic geometry”.  X.~Z.~ is partially supported by the Simons Collaborative Grant 636187.

\section{Geometric stability conditions}
\subsection{Generalities on stability conditions}
\label{subsect:GeneralityStabCond}
Let $X$ be a smooth projective variety over $\C$. Denote by $\Db(X)$ the bounded derived category of coherent sheaves on $X$. We recall some basic notions of stability conditions.

\begin{Def}
A \emph{slicing} $\cP$ on $\Db(X)$ is a collection of full additive subcategories $\cP(\phi)\subset \Db(X)$ indexed by all $\phi\in \R$ such that
\begin{enumerate}
    \item $\cP(\phi)[1]=\cP(\phi+1)$;
    \item if $\phi_1>\phi_2$ and $F_i\in \operatorname{Obj}(\cP(\phi_i))$, then $\Hom(F_1,F_2)=0$;
    \item \label{c} for any object $E$ in $\Db(X)$, there are real numbers $\phi_1>\dots>\phi_m$, objects $E_i$ in $\Db(X)$, and a collection of distinguished triangles 
    \begin{equation*}
        \begin{tikzcd}[column sep=tiny]
0=E_0  \arrow[rr]& & E_1 \arrow[dl] \arrow[rr]& & E_2\arrow[r]\arrow [dl]& \cdots\arrow[r] &E_{m-1}\arrow [rr]& & E_m=E\arrow[dl]\\
& A_1 \arrow[ul,dashed, "+1"] && A_2\arrow[ul,dashed, "+1"] &&&& A_m \arrow[ul,dashed, "+1"]
\end{tikzcd}
    \end{equation*}
such that  $A_i=\mathrm{Cone}(E_{i-1}\rightarrow E_i)$ is an object in $\cP(\phi_i)$ for every $1\leq i\leq m$.
\end{enumerate}
\label{def:slicing}
\end{Def}
\begin{Rem}\label{rem:slicing}
Non-zero objects of $\cP(\phi)$ are called \emph{semistable} with phase $\phi$; simple objects of $\cP(\phi)$  are called
\emph{stable}. 
The sequence of maps in \eqref{c} is called the \emph{Harder--Narasimhan (HN) filtration} of $E$. It is not hard to see that the HN filtration is unique up to isomorphism.  We will call each $A_i$ the \emph{HN factor} of $E$. We denote $\phi_\cP^+(E)\coloneqq \phi_1$ and $\phi_\cP^-(E)\coloneqq \phi_m$; when $E$ is semistable, we denote $\phi_\cP(E)$ the phase of $E$ (or simply $\phi(E)$ when there is no ambiguity).
\end{Rem}

\begin{Not}
Let $\cP$ be a slicing. For every interval $I\subset \R$, we define a \textit{truncated HN factor} $\mathrm{HN}_{\cP}^I(E)$ of $E$ as follows. In practice, we would also write  $\leq \phi$ or $>\phi$ for the interval $(-\infty,\phi]$ or $(\phi,+\infty)$ respectively when it causes no confusion.

In the HN filtration \ref{def:slicing}\eqref{c} of $E$, let $a,b\in \{1,2,\dots,m\}$ be such that $\phi_i\in I$ when and only when $a\leq i\leq b$, then $\mathrm{HN}_\cP^I(E)$ is defined to be $\mathrm{Cone}(E_{a-1}\rightarrow E_b)$ (and to be $0$ if no $\phi_i$ is contained in $I$). We denote by $\cP(I)$ the full subcategory of all objects $E\in\Db(X)$ such that $\phi^\pm_{\cP}(E)\in I$.
\label{not:hntrunc}
\end{Not}

Denote by $\mathrm{K}(X)$ the Grothendieck group of $\Db(X)$.

\begin{Def}
A \emph{Bridgeland pre-stability condition} on $\Db(X)$ is a pair $\sigma=(\cP,Z)$, where 
\begin{itemize}
    \item $\cP$ is a slicing of $\Db(X)$;
    \item $Z:\mathrm{K}(X)\rightarrow \C$ is a group homomorphism, called the \emph{central charge};
\end{itemize}
such that for any non-zero object $E$ in $\cP(\phi)$, we have $Z([E])=m(E)e^{i\pi \phi}$ for some $m(E)\in \mathbb{R}_{>0}$.
\label{def:prestab}
\end{Def}

The set of all pre-stability conditions has a natural topology induced by the following generalised metric function: for two pre-stability conditions  $\sigma_1=(\cP_1, Z_1)$ and $\sigma_2=(\cP_2, Z_2)$, their distance is defined as
$$\mathrm{dist}(\sigma_1,\sigma_2)\coloneqq\sup_{\substack{E\in\Db(X)\\ E\neq 0 }}\left\{\abs{\phi_{\cP_1}^-(E)-\phi_{\cP_2}^-(E)},\abs{\phi_{\cP_1}^+(E)-\phi_{\cP_2}^+(E)}, \left\|Z_1-Z_2\right\|\right\}\in [0,+\infty].$$

We have the following two natural group actions on the set of pre-stability conditions; see \cite[Remark 5.14]{Emolo-Benjamin:lecture-notes} for more details.
\begin{itemize}
     \item The universal covering $\glt$ of the group $\GL_2^+(\mathbb{R})$ acts on the right of this set. This $\glt$-action does not affect the  (semi)stability of any object.
     \item The group $\Aut(\Db(X))$ of autoequivalences of $\Db(X)$ acts on the left of this set.
\end{itemize}

 Let  $\Lambda$ be a free abelian group of finite rank and let $\lambda\colon \mathrm{K}(X)\twoheadrightarrow \Lambda$ be  a surjective homomorphism. A typical choice (by default) for $\Lambda$ is the numerical Grothendieck group $\Kn(X)$ and $\lambda$ is the natural projection $\operatorname{K}(X)\twoheadrightarrow \Kn(X)$. 

\begin{Def}[\cite{Bridgeland:Stab, Kontsevich-Soibelman:stability}]
We say that a pre-stability condition $(\cP,Z)$  satisfies the \emph{support property} (with respect to $\Lambda$, or rather $\lambda$) if the central charge $Z$ factors through $\lambda\colon\mathrm{K}(X)\twoheadrightarrow \Lambda$ and there is  a quadratic form $Q_\Lambda$ on $\Lambda\otimes \mathbb R$ such that 
\begin{enumerate}
    \item the kernel $\Ker Z\subset \Lambda\otimes \mathbb R$ of the central charge is negative definite with respect to $Q_\Lambda$;
    \item for any semistable object $E$, we have $Q_{\Lambda}(E)\geq 0$.
\end{enumerate}
A pre-stability condition satisfying the support property is called a \emph{stability condition}, and we denote the set of all stability conditions  (with respect to $\Lambda$, or rather $\lambda$) as $\Stab_{\Lambda}(X)$.

When $\lambda$ factors through $\mathrm{K}(X)\rightarrow \mathrm{K}_{\mathrm{num}}(X)$, we will call the stability condition \emph{numerical}. 
\label{def:supportproperty}
\end{Def}

\begin{Thm}[{\cite[Theorem 7.1]{Bridgeland:Stab}, \cite[Theorem 1.2]{Arend:shortproof}}]
 The space Stab$_\Lambda(X)$ of stability conditions
is naturally a complex manifold of dimension $\rank(\Lambda)$, such that the map forgetting the slicing
$$\pi_Z: \mathrm{Stab}_\Lambda(X) \rightarrow \mathrm{Hom}_{\mathbb Z}(\Lambda, \mathbb C) \ \ \ \sigma =(Z,\cP) \mapsto Z 
$$
is a local biholomorphic isomorphism at every point of $\Stab_\Lambda(X)$.
\label{thm:spaceasamfd}
\end{Thm}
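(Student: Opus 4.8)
The statement contains two assertions: that $\Stab_\Lambda(X)$ carries a complex manifold structure of dimension $\rank(\Lambda)$, and that $\pi_Z$ realizes this structure as a local biholomorphism. The plan is to reduce both to a single topological fact, namely that $\pi_Z$ is a \emph{local homeomorphism} onto an open subset of $\Hom_\Z(\Lambda,\C)\cong\C^{\rank(\Lambda)}$. Granting this, the complex structure is transported from the target: each $\sigma$ has a neighbourhood $U$ on which $\psi\coloneqq\pi_Z|_U$ is a homeomorphism onto an open subset of a fixed complex vector space, and for two overlapping such charts $\psi_1,\psi_2$ the transition $\psi_2\circ\psi_1^{-1}$ sends $W$ to $\pi_Z(\psi_1^{-1}(W))=W$, i.e.\ it is a restriction of the identity of $\Hom_\Z(\Lambda,\C)$, hence holomorphic. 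These charts therefore endow $\Stab_\Lambda(X)$ with a complex manifold structure in which $\pi_Z$ is, in each chart, literally the identity onto its image, so it is tautologically a local biholomorphism. Thus the entire content is the deformation property of the forgetful map.

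The core is the following deformation lemma: given $\sigma=(Z,\cP)\in\Stab_\Lambda(X)$, there is $\epsilon_0>0$ such that every $W$ with $\|W-Z\|<\epsilon_0$ is the central charge of a unique stability condition $\tau=(W,\cQ)$ with $\mathrm{dist}(\sigma,\tau)$ small. I would build $\cQ$ by \emph{refiltering} the $\cP$-semistable pieces against $W$. The first step is to exploit the support property to make all estimates uniform: since $\Ker Z$ is negative definite for $Q_\Lambda$ while $Q_\Lambda\geq 0$ on every $\sigma$-semistable class, a standard compactness argument shows that the normalized classes $\lambda(E)/\abs{Z(E)}$ of $\sigma$-semistable objects range over a fixed compact subset $K\subset\Lambda\otimes\R$. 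Consequently a bound $\|W-Z\|<\epsilon_0$ in operator norm forces $\abs{W(E)-Z(E)}<\sin(\pi\eta)\,\abs{Z(E)}$ simultaneously for \emph{all} $\sigma$-semistable $E$, with $\eta\to 0$ as $\epsilon_0\to 0$. This is exactly where the support property replaces the local-finiteness hypothesis of the original argument. Fixing a window width $0<\epsilon<\tfrac12$, for each $\phi$ the subcategory $\cP((\phi-\epsilon,\phi+\epsilon))$ is quasi-abelian, and the above estimate shows that the restriction of $W$ defines, after the appropriate $\glt$-rotation, a stability function on it; I would then let $\cQ(\phi)$ consist of the objects that are $W$-semistable of the prescribed phase inside these windows and glue to obtain a global slicing.

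The main obstacle is to verify that this refiltering genuinely produces a \emph{slicing}, that is, that Harder--Narasimhan filtrations with respect to $W$ exist inside each quasi-abelian window. This is where finiteness is indispensable: one must show that the relevant chains of subobjects stabilize, so that HN factors exist and are unique. Here I would invoke that the support property implies Bridgeland's local finiteness, i.e.\ that the categories $\cP((\phi-\epsilon,\phi+\epsilon))$ are of finite length, whence HN filtrations exist by the standard theory of stability functions on quasi-abelian categories. Three further checks remain: that $(W,\cQ)$ again satisfies the support property, for which the same quadratic form $Q_\Lambda$ works since semistability is only perturbed slightly and $Q_\Lambda\geq 0$ is an open condition on the compactly-constrained semistable cone; that $\mathrm{dist}(\sigma,\tau)\to 0$ as $W\to Z$, which follows by tracking the phase shifts through the refiltering; and that $\cQ$ is the unique slicing compatible with $W$ that stays near $\cP$, which holds because any such slicing must split the $\cP$-windows in the same way once the phases are pinned by $W$.

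Finally I would package these estimates into the topological conclusion. Continuity of $\pi_Z$ is immediate from the definition of $\mathrm{dist}$. Local injectivity holds because two stability conditions within distance $<\tfrac12$ that share a central charge must have identical slicings, as equal phases and masses force equal HN factors. Openness is precisely the deformation lemma, which realizes a full neighbourhood of $Z$. Together these yield that $\pi_Z$ is a local homeomorphism, completing the reduction of the first paragraph. I note that the argument of \cite{Arend:shortproof} streamlines the construction of $\cQ$ by working directly with the mass and phase bounds afforded by the support property, largely bypassing the quasi-abelian bookkeeping; I would follow that route for the cleanest exposition.
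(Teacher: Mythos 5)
This theorem is not proved in the paper at all: it is imported verbatim from \cite{Bridgeland:Stab} and \cite{Arend:shortproof}, and your sketch follows exactly that cited route (deform the central charge, refilter $\sigma$-semistable objects inside thin quasi-abelian windows $\cP((\phi-\epsilon,\phi+\epsilon))$, and use the support property to make the estimate $|W(E)-Z(E)|<\sin(\pi\eta)|Z(E)|$ uniform over semistable $E$). Most of your reductions are sound: the compactness of $\{v\in\Lambda\otimes\R : Q_\Lambda(v)\geq 0,\ |Z(v)|=1\}$ does follow from $Q_\Lambda$ being negative definite on $\Ker Z$; the support property does imply local finiteness, so HN filtrations exist in the windows; local injectivity via ``same central charge plus small distance forces equal slicings'' is correct (indeed the paper itself proves a version of this as Lemma \ref{lem:samecentralcharge}); and transporting the complex structure so that $\pi_Z$ is the identity in charts is standard.

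There is, however, one genuine gap, and it sits at the crux of the modern proof you cite. Your verification that the deformed pair $\tau=(W,\cQ)$ satisfies the support property --- ``the same $Q_\Lambda$ works since semistability is only perturbed slightly and $Q_\Lambda\geq 0$ is an open condition'' --- does not work as stated. First, nonnegativity of a quadratic form is a \emph{closed} condition, not an open one. Second, and more substantively, the $W$-semistable objects produced by refiltering are genuinely new objects, not small perturbations of old ones: the class of such an object is a \emph{sum} of classes of its $\sigma$-HN factors, and $Q_\Lambda\geq 0$ is not preserved under sums of vectors in general. The actual argument in \cite{Arend:shortproof} (see also the appendix of \cite{BMS:stabCY3s}) requires the convexity statement that, because $\Ker Z$ is $Q_\Lambda$-negative-definite, the set of vectors with $Q_\Lambda\geq 0$ whose central charge lies in a fixed thin sector is convex, so a sum of classes of semistable factors with nearby phases again satisfies $Q_\Lambda\geq 0$; moreover the conclusion only holds on the connected component of $\{W : Q_\Lambda|_{\Ker W}\ \text{negative definite}\}$ containing $Z$, a restriction absent from your sketch. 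Without this step the deformed pair need not lie in $\Stab_\Lambda(X)$ at all, so the openness of $\pi_Z$ --- the entire content of your reduction in the first paragraph --- is left unproved.
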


\begin{Rem}
Note that the notion of pre-stability condition in this paper (as well as many others) is called stability condition in Bridgeland's original paper \cite{Bridgeland:Stab}. 

When the lattice $\Lambda$ is the numerical Grothendieck group, our definition of stability conditions is the same as that of the full locally-finite numerical stability conditions in \cite{Bridgeland:K3}. We denote the space of such stability conditions as $\Stab(X)$ and call it the \textit{stability manifold} of $X$. 
\end{Rem}

\subsection{Geometric stability conditions}
\begin{Def}\label{def:geostab}
A stability condition $\sigma$ on $\Db(X)$ is called \emph{geometric} (with respect to $X$) if for each point $p\in X$, the skyscraper sheaf $\cO_p$ is   $\sigma$-stable, and all skyscraper sheaves are of the same phase.
\end{Def}
The definition is similar to that of \cite[Definition 10.2]{Bridgeland:K3}. The only difference is that in \cite{Bridgeland:K3}, the author further assumes a `goodness' condition. In the next section
, we will see that the `goodness' condition is redundant at least in the surface case.
In the next proposition, we show that the condition of having the same phase is redundant for numerical stability conditions when $X$ is a smooth connected projective variety. The result might be known to some experts, but there is no proof written down as far as we know.

\begin{Prop}\label{prop:skyscrapersamephase}
Let $X$ be a connected smooth projective variety. Let $\sigma$ be a numerical stability condition on $\Db(X)$ such that every skyscraper sheaf is $\sigma$-stable. Then all skyscraper sheaves are of the same phase, in other words, $\sigma$ is geometric.
\end{Prop}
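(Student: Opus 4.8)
The plan is to deduce the equality of phases from the connectedness of $X$, once we know two things: that the central charge takes the same value on every skyscraper sheaf, and that membership in a fixed heart is an open condition in the tautological family of skyscrapers. The discreteness forced by stability then upgrades ``open'' into ``locally constant,'' and connectedness finishes the argument.

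First I would observe that all skyscraper sheaves have the same class in $\Kn(X)$. Indeed, for any $F\in\Db(X)$ the Euler pairing $\chi(F,\cO_p)=\int_X\ch(F)^\vee\ch(\cO_p)\td_X$ depends only on $\ch(\cO_p)=[\mathrm{pt}]$, which is independent of $p$; hence $[\cO_p]=v_0\in\Kn(X)$ for all $p$. Since $\sigma$ is numerical, $Z(\cO_p)=Z(v_0)=:Z_0$ is independent of $p$. As each $\cO_p$ is $\sigma$-stable, $Z_0=m_p\,e^{i\pi\phi(\cO_p)}$ with $m_p\in\R_{>0}$; comparing the two sides gives $m_p=|Z_0|$ and, crucially,
$$\phi(\cO_p)\in\phi_0+2\Z,\qquad\text{where }\phi_0:=\tfrac1\pi\arg Z_0.$$
Thus the phases of distinct skyscrapers can differ only by a nonzero even integer; the entire point is to exclude this.

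Next I would put the skyscrapers into a single flat family and exploit openness of the heart. Let $\Delta\subset X\times X$ be the diagonal; then $\cO_\Delta$ is flat over the second factor with $L\iota_p^*\cO_\Delta\cong\cO_p$ for every $p$, where $\iota_p\colon X\times\{p\}\hookrightarrow X\times X$. Fix $p$, write $\phi(\cO_p)=\phi_0+2a$, and consider the heart $\cA_a:=\cP((\phi_0+2a-1,\,\phi_0+2a])$ of the bounded t-structure associated with $\sigma$. As $\cO_p$ is stable of phase $\phi_0+2a$, it lies in $\cA_a$. Membership of the fibres of a family in a fixed heart is an open condition on the base — this is the content of the theory of families of t-structures, valid here since $\sigma$ satisfies the support property, in the spirit of \cite{Polishchuk:families-of-t-structures} — so there is an open neighbourhood $U\ni p$ with $\cO_q\in\cA_a$ for all $q\in U$. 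But $\cO_q$ is semistable of some phase $\phi(\cO_q)\in\phi_0+2\Z$, and $\cO_q\in\cA_a$ forces $\phi(\cO_q)\in(\phi_0+2a-1,\,\phi_0+2a]$. This half-open interval of length $1$ meets $\phi_0+2\Z$ in the single point $\phi_0+2a$, so $\phi(\cO_q)=\phi_0+2a=\phi(\cO_p)$. Hence $p\mapsto\phi(\cO_p)$ is locally constant.

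Finally, the loci $\{q:\phi(\cO_q)=\phi_0+2a\}$ for $a\in\Z$ are open by the previous step, pairwise disjoint, and cover $X$; since $X$ is connected, exactly one is nonempty, so all skyscrapers share one phase, and by Definition \ref{def:geostab} the condition $\sigma$ is geometric. The main obstacle is the openness step: the heart $\cA_a$ is in general a tilt of $\Coh(X)$ rather than $\Coh(X)$ itself, so ``$\cO_q\in\cA_a$'' is not a classical flatness condition, and one genuinely needs the family version of Bridgeland stability — equivalently, the local constancy of phase for flat families of $\sigma$-stable objects. It is worth stressing that a mere topological proximity of phases would not suffice: the decisive feature is the \emph{algebraic} separation of the admissible phases into the lattice $\phi_0+2\Z$, which is exactly what converts the openness of heart-membership into the desired rigidity.
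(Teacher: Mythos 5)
Your overall strategy (discreteness of the possible phases plus openness plus connectedness) is structurally coherent, and your first step is correct and matches the paper's starting observation: all skyscraper sheaves share the same class in $\Kn(X)$, so since $\sigma$ is numerical their phases lie in $\phi_0+2\Z$ and can differ only by nonzero even integers. The genuine gap is exactly the step you flag as the main obstacle: the claim that, for the flat family $\cO_\Delta$ over $X$, the locus $\{q\in X : \cO_q\in\cA_a\}$ is open is \emph{not} an available theorem for an arbitrary numerical stability condition. The machinery of constant families of t-structures (Abramovich--Polishchuk, and \cite{Polishchuk:families-of-t-structures}) requires the heart to be Noetherian (of finite type over the base), and Noetherianity of the heart $\cP((\phi_0+2a-1,\phi_0+2a])$ of a general Bridgeland stability condition is an open problem; the support property alone does not supply it. Likewise, openness of stability in families in the sense of the families-of-stability-conditions theory presupposes that $\sigma$ extends to the base with openness built into the definition, which is not known here --- and it cannot be assumed, because the proposition must apply to \emph{every} numerical stability condition, including potentially exotic ones not obtained by tilting. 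Note also that the openness statement the paper does use elsewhere (\cite[Proposition 5.27]{Emolo-Benjamin:lecture-notes}) is of the opposite kind: it varies $\sigma$ with the object fixed, whereas you need to vary the object with $\sigma$ fixed.

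The paper avoids families altogether by an elementary argument: if two points of a smooth connected curve $C\subset X$ had different phases, one chooses points $p_1,\dots,p_m$, $q_1,\dots,q_m$ on $C$ with $\cO_C(-p_1-\dots-p_m)\cong\cO_C(-q_1-\dots-q_m)\eqqcolon\cL$ and $\phi(\cO_{p_1})\leq\phi(\cO_{q_i})-2$ (the same even-gap observation you made), and then compares the two presentations $\cL\cong\mathrm{Cone}(\cO_C\to\bigoplus_i\cO_{p_i})[-1]\cong\mathrm{Cone}(\cO_C\to\bigoplus_i\cO_{q_i})[-1]$ via Lemma \ref{lem:HNfactorcontrol}: part 1 applied to the $q$-presentation shows the truncated HN factors of $\cL$ below phase $\min_i\phi(\cO_{q_i})-1$ agree with those of $\cO_C$, while part 2 applied to the $p$-presentation shows $\mathrm{HN}^{\leq\phi(\cO_{p_1})}(\cL)\not\cong\mathrm{HN}^{\leq\phi(\cO_{p_1})}(\cO_C)$, a contradiction since $\phi(\cO_{p_1})<\min_i\phi(\cO_{q_i})-1$. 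To repair your proof you would need either Noetherianity of the heart or an openness-in-families statement in full generality, each of which is at least as hard as the proposition itself; as written, your argument establishes the result only for those $\sigma$ (e.g.\ with tilted, Noetherian hearts) where the families machinery is known to apply.
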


We first establish the following technical lemma.

\begin{Lem}\label{lem:HNfactorcontrol}
Let $\sigma$ be a stability condition on $\Db(X)$. Let $g\colon F\rightarrow E$ be a morphism and  $\tilde F\coloneqq\mathrm{Cone}(F\xrightarrow{g} E)[-1]$ in $\Db(X)$.
\begin{enumerate}
    \item[1.] Then $\mathrm{HN}_\sigma^{a}(F)\cong \mathrm{HN}_\sigma^{a}(\tilde F)$ for every $a\notin [\phi^-(E)-1,\phi^+(E)]$.
    \item[2.] If $E$ is non-zero and the second smallest phase of the HN factors of $E$ is greater than $\phi^-(E)+1$, then $\mathrm{HN}_\sigma^{\leq \phi^-(E)}(F)\not\cong\mathrm{HN}_\sigma^{\leq \phi^-(E)}(\tilde F)$.
\end{enumerate}
\end{Lem}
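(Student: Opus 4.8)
The plan is to analyze how the Harder--Narasimhan filtration of $F$ relates to that of $\tilde F = \mathrm{Cone}(F\xrightarrow{g}E)[-1]$ by exploiting the rotated distinguished triangle
\begin{equation*}
\tilde F \longrightarrow F \xrightarrow{\ g\ } E \longrightarrow \tilde F[1].
\end{equation*}
The key general principle I would use is that in any triangle $A\to B\to C\to A[1]$, the HN phases of $B$ are controlled by those of $A$ and $C$; more precisely, $\phi^+(B)\le \max\{\phi^+(A),\phi^+(C)\}$ and $\phi^-(B)\ge \min\{\phi^-(A),\phi^-(C)\}$, and the outer HN factors of $B$ that lie strictly outside the phase-range of $C$ must come entirely from $A$. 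I would package this via the $\Hom$-vanishing in Definition \ref{def:slicing}(b) applied to semistable pieces.

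**Part 1.** For the first claim, fix $a\notin[\phi^-(E)-1,\phi^+(E)]$, so the truncated factor $\mathrm{HN}^{a}_\sigma(E)=0$ since every HN factor of $E$ has phase in $[\phi^-(E),\phi^+(E)]$, and correspondingly every HN factor of $E[-1]$, which equals $\tilde F$'s ``$E$-contribution,'' lies in $[\phi^-(E)-1,\phi^+(E)-1]$. I would argue that applying the truncation functor $\mathrm{HN}^{a}_\sigma(-)$ to the triangle $\tilde F\to F\to E\to \tilde F[1]$ kills the $E$-term in the relevant phase window: because $a$ avoids $[\phi^-(E),\phi^+(E)]$ and $E[-1]$ lives in $[\phi^-(E)-1,\phi^+(E)-1]$, the semistable pieces of phase exactly $a$ in $F$ and in $\tilde F$ must coincide, since the morphisms $F\to E$ and $E[-1]=\mathrm{Cone}(g)[-1]\cdots$ cannot alter HN factors whose phase sits outside the band occupied by $E$ and $E[-1]$. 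Concretely, the long exact sequence of the HN truncation (using that $\mathrm{HN}^a_\sigma$ is additive on triangles when $a$ is isolated from the phases of the connecting object) gives $\mathrm{HN}^a_\sigma(\tilde F)\cong \mathrm{HN}^a_\sigma(F)$.

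**Part 2 and the main obstacle.** For the second claim I want to show that near the bottom phase $\phi^-(E)$ the filtrations genuinely differ. Write $\psi\coloneqq\phi^-(E)$ and let $E'=\mathrm{HN}^{\psi}_\sigma(E)$ be the lowest HN factor of $E$, a nonzero semistable object of phase $\psi$; by hypothesis every other HN factor of $E$ has phase $>\psi+1$, so $E[-1]$ has lowest factor $E'[-1]$ of phase $\psi-1$ with a gap of more than $1$ above it. The strategy is to apply $\mathrm{HN}^{\le\psi}_\sigma(-)$ to the triangle and track the contribution of $E'[-1]$: since $\psi-1<\psi$ but the next factor of $E[-1]$ has phase $>\psi$, the truncation $\mathrm{HN}^{\le\psi}_\sigma$ sees exactly $E'[-1]$ from the $\tilde F\to F\to E$ data, and this extra summand of phase $\psi-1$ cannot be cancelled—precisely because it is strictly separated from the rest. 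I expect the main difficulty to be making the ``cannot be cancelled'' step rigorous: one must rule out that the connecting map $E\to\tilde F[1]$ reabsorbs the $E'[-1]$ piece into a higher-phase factor, which requires the $\Hom$-vanishing between the phase-$\psi$ semistable part and everything of phase exceeding $\psi+1$, together with the uniqueness of the HN filtration from Remark \ref{rem:slicing}. I would therefore compute $\mathrm{HN}^{\le\psi}_\sigma(F)$ and $\mathrm{HN}^{\le\psi}_\sigma(\tilde F)$ and compare their classes (or their phase-$\psi-1$ semistable subobjects) in $\mathrm K(X)$; the phase gap guarantees these differ by the nonzero class of $E'[-1]$, yielding the required non-isomorphism.
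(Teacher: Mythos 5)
Your overall strategy coincides with the paper's (truncate the HN filtration across the triangle $\tilde F\to F\to E$, use the $\Hom$-vanishing of Definition \ref{def:slicing}, and detect the lowest factor of $E$ numerically), but at both decisive points you assert precisely what has to be proved. In Part 1, the claim that $\mathrm{HN}^a_\sigma$ is ``additive on triangles when $a$ is isolated from the phases of the connecting object'' is not an available exactness property: HN truncation is not an exact functor, and a slicing yields long exact sequences only after tilting to a heart. The paper makes your principle rigorous by an explicit octahedral-axiom argument: the composite $\mathrm{HN}^{>\phi^+(E)}_\sigma(\tilde F)\to\tilde F\to F\xrightarrow{g}E$ vanishes because $\Hom(\mathrm{HN}^{>\phi^+(E)}_\sigma(\tilde F),E)=0$, so one forms $K=\mathrm{Cone}\bigl(\mathrm{HN}^{>\phi^+(E)}_\sigma(\tilde F)\to F\bigr)$, and the octahedron produces a triangle $E[-1]\to \mathrm{HN}^{\le\phi^+(E)}_\sigma(\tilde F)\to K\to E$, from which $\Hom(A,K)=0$ for every semistable $A$ of phase $>\phi^+(E)$, hence $K\cong \mathrm{HN}^{\le\phi^+(E)}_\sigma(F)$; a mirror argument handles $a<\phi^-(E)-1$. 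Some such construction is indispensable, and your sketch does not contain it.

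In Part 2 the gap is more substantive, because the mechanism you envision is wrong in one of the two cases that genuinely occur. Write $\psi=\phi^-(E)$ and pass to the heart $\cA=\cP_\sigma((\psi-1,\psi])$; the gap hypothesis says exactly that $\cH^{\pm1}_\cA(E)=0$ and $\cH^0_\cA(E)=E'$, and the long exact sequence of $\tilde F\to F\to E$ ends with $E'\to\cH^1_\cA(\tilde F)\xrightarrow{h}\cH^1_\cA(F)\to 0$. If $h$ is not an isomorphism (note $h$ is automatically surjective), then $\tilde F$ does gain a phase-$(\psi-1)$ piece, but it is only the quotient $\ker h$ of $E'$ — nonzero quotients of $E'$ in $\cA$ lie in $\cP_\sigma(\psi)$ since $\psi$ is the top phase of the heart — not your ``extra summand $E'[-1]$''. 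If $h$ \emph{is} an isomorphism, $\tilde F$ gains nothing at phase $\psi-1$ at all; instead one gets $0\to\cH^0_\cA(\tilde F)\to\cH^0_\cA(F)\to E'\to 0$, so the discrepancy sits in the window $(\psi-1,\psi]$ of $F$. In each case the truncations differ because a nonzero semistable object ($\ker h$, resp.\ $E'$) has nonzero class in $\Kn(X)$, and summing the two windows does confirm the identity $[\mathrm{HN}^{\le\psi}_\sigma(\tilde F)]-[\mathrm{HN}^{\le\psi}_\sigma(F)]=[E'[-1]]$ that you invoke — but this identity is the \emph{output} of the dichotomy on $h$, not something ``the phase gap guarantees'' a priori, since classes of HN truncations are not determined by the classes of the ambient objects. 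Ruling out the ``reabsorption'' you correctly worry about is exactly the analysis of $h$, and that is the step your proposal leaves open.
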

\begin{proof}
As for the first statement, we consider the following diagram of distinguished triangles:
\begin{center}
	\begin{tikzcd}
		0\arrow{d}\arrow{r}&\mathrm{HN}^{> \phi^+(E)}_\sigma(\tilde F) \arrow{d}{\mathrm{can}} \arrow{r}{\id}
		& \mathrm{HN}^{> \phi^+(E)}_\sigma(\tilde F) \arrow{d}{f}\arrow{r}& 0\arrow{d}\\
		E[-1]\arrow{r} \arrow{d}{\id}& \tilde F \arrow{r}{g'}\arrow{d} & F\arrow{r}{g}\arrow{d} & E\arrow{d}{\id}\\
		E[-1]\arrow{r} & \mathrm{HN}^{\leq \phi^+(E)}_\sigma(\tilde F)\arrow{r} & K\arrow{r} & E
	\end{tikzcd}
\end{center}
The morphism $f$ is the composition of $g'$ and $\mathrm{can}$. The square on the top-right commutes since $g\circ f\in\Hom(\mathrm{HN}^{> \phi^+(E)}_\sigma(\tilde F),E)=0$. The sequence on the bottom is the cone of distinguished triangles on the top and middle, hence it is a distinguished triangle by the octahedral axiom.

For any $\sigma$-semistable object $A$ with phase $>\phi^+(E)$, by applying $\Hom(A,-)$ to the distinguished triangle on the bottom, we have $\Hom(A,K)=0$. Therefore, the object $K$ is in $\cP_\sigma(\leq \phi^+(E))$. 

Note that $K=\mathrm{Cone}(\mathrm{HN}^{> \phi^+(E)}_\sigma(\tilde F) \xrightarrow{f} F)$. It follows $K\cong \mathrm{HN}^{\leq  \phi^+(E)}_\sigma(F) $ and $$\mathrm{HN}^{> \phi^+(E)}_\sigma(\tilde F)\cong \mathrm{HN}^{> \phi^+(E)}_\sigma(F).$$

By a similar argument for the diagram
\begin{center}
	\begin{tikzcd}
		E[-1]\arrow{d}{\id}\arrow{r}& L \arrow{d}{} \arrow{r}
		& \mathrm{HN}^{\geq \phi^-(E)-1}_\sigma(F) \arrow{d}{}\arrow{r}& E\arrow{d}{\id}\\
		E[-1]\arrow{r} \arrow{d}& \tilde F \arrow{r}\arrow{d} & F\arrow{r}{}\arrow{d}{\mathrm{can}} & E\arrow{d}{}\\
		0\arrow{r} & \mathrm{HN}^{< \phi^-(E)-1}_\sigma(F)\arrow{r}{\id} & \mathrm{HN}^{< \phi^-(E)-1}_\sigma(F)\arrow{r} & 0,
	\end{tikzcd}
\end{center}
we get $\mathrm{HN}^{< \phi^-(E)-1}_\sigma(\tilde F)\cong \mathrm{HN}^{< \phi^-(E)-1}_\sigma(F).$ The first  statement holds.\\

As for the second statement, note that $\cA\coloneqq\cP_\sigma((\phi^-(E)-1,\phi^-(E)])$ is a heart on $\Db(X)$. Apply $\cH^\bullet_\cA(-)$ to the distinguished triangle $\tilde F\rightarrow F\xrightarrow{g} E$, we get the long exact sequence 
$$\cH^{-1}_\cA(E)\rightarrow \cH^{0}_\cA(\tilde F)\rightarrow \cH^{0}_\cA( F)\rightarrow \cH^{0}_\cA(E)\rightarrow \cH^{1}_\cA(\tilde F)\xrightarrow{h} \cH^{1}_\cA(F)\rightarrow \cH^{1}_\cA(E).$$
Note that for every object $B$, $\cH^i_\cA(B)=\mathrm{HN}_\sigma^{(\phi^-(E)-i-1,\phi^-(E)-i]}(B)[i]$. By the assumption on $E$, we have $\cH^{\pm 1}_\cA(E)=0$ and $\cH^0_\cA(E)=\mathrm{HN}^-(E):=\mathrm{HN}_\sigma^{\phi^-(E)}(E)$.

If the map $h$ is not an isomorphism, then  the kernel of $h$ in $\cA$ is a  quotient object of $\mathrm{HN}^-(E)$, which must be in $\cP_\sigma(\phi^-(E))$. We get a short exact sequence
$$0\rightarrow \ker h\rightarrow \mathrm{HN}_\sigma^{\phi^-(E)-1}(\tilde F)[1]\xrightarrow{h}\mathrm{HN}_\sigma^{\phi^-(E)-1}(F)[1]\rightarrow 0,$$ which implies $\mathrm{HN}_\sigma^{\phi^-(E)-1}(\tilde F)\not\cong\mathrm{HN}_\sigma^{\phi^-(E)-1}(F)$ as $[\ker h]$ is not $0$ in $\Kn(X)$. 

If the map $h$ is an isomorphism, then we have the short exact sequence $$0\rightarrow \cH^{0}_\cA(\tilde F)\rightarrow \cH^{0}_\cA( F)\rightarrow \cH^{0}_\cA(E)\rightarrow 0 .$$
Since $[\cH^{0}_\cA(E)]\neq 0$ in $\Kn(X)$, we have $$\mathrm{HN}_\sigma^{(\phi^-(E)-1,\phi^-(E)]}(\tilde F)=\cH^0_\cA(\tilde F)\not\cong \cH^0_\cA(F)=\mathrm{HN}_\sigma^{(\phi^-(E)-1,\phi^-(E)]}(\tilde F).$$
In any case, we always have $\mathrm{HN}_\sigma^{[\phi^-(E)-1, \phi^-(E)]}(F)\not\cong\mathrm{HN}_\sigma^{[\phi^-(E)-1, \phi^-(E)]}(\tilde F)$.
\end{proof}

\begin{proof}[{Proof of Proposition \ref{prop:skyscrapersamephase}}]
For a smooth connected curve $C$ on $X$, under the assumption that all skyscraper sheaves of points on the curve $C$ are $\sigma$-stable, but suppose that not all of them are with the same phase. Then there exist a large enough integer $m$ and points $p_1,\dots,p_m$, $q_1,\dots,q_m$ on $C$ such that 
\begin{itemize}
    \item $\cO_C(-p_1-\dots-p_m)\cong \cO_C(-q_1-\dots-q_m)\eqqcolon\cL$;
    \item $\phi(\cO_{p_1})<\phi(\cO_{q_i})$ for every $1\leq i\leq m$.
\end{itemize}
(Note that we used the assumption that all skyscraper sheaves of points are stable to be able to talk about their phases.)
In particular, as the central charge factors via $\operatorname{K}_{\mathrm{num}}(X)$, $\phi(\cO_{p_1})\leq  \phi(\cO_{q_i})- 2$ for every $i$.  We may also assume $\phi(\cO_{p_1})=\min\{\phi(\cO_{p_i})\}$.

Note that $\cL$ has two expressions $\mathrm{Cone}(\cO_C\rightarrow \cO_{p_1}\oplus\dots\oplus \cO_{p_m})[-1]$ and $\mathrm{Cone}(\cO_C\rightarrow \cO_{q_1}\oplus\dots\oplus \cO_{q_m})[-1]$. By Lemma \ref{lem:HNfactorcontrol}.1, the HN factor $\mathrm{HN}^a$ of  $\cL$ is isomorphic to that of $\cO_C$ for every $a< \min\{\phi(\cO_{q_i})\}-1$. By Lemma \ref{lem:HNfactorcontrol}.2, $\mathrm{HN}^{\leq \phi(\cO_{p_1})}(\cL)\not\cong \mathrm{HN}^{\leq \phi(\cO_{p_1})}(\cO_C)$, which leads to a contradiction.

Hence, the phase of all skyscraper sheaves of points on $C$ are the same. As $X$ is connected and projective, this implies that all skyscraper sheaves are of the same phase.
\end{proof}

The following lemma is an immediate generalization of {\cite[Lemma 10.1]{Bridgeland:K3}} in the higher dimensional case.
\begin{Lem}[{\cite[Lemma 10.1]{Bridgeland:K3}}]\label{lem:geohearts}
Let $X$ be an $n$-dimensional smooth projective variety and $\sigma=(\cP,Z)$ be a geometric stability condition on $X$ such that $\cO_p\in\cP(1)$. Let $F$ be an object of $\Db(X)$. Then
\begin{enumerate}
    \item if $F\in\cP((0,1])$, then $\cH^i(F)$ vanishes unless $-n+1\leq i \leq 0$, and moreover, $\cH^{-n+1}(F)$ is torsion-free;
    \item if $F\in\Coh(X)$, then $F\in \cP((-n+1,1])$; if $F$ is a torsion sheaf, then $F\in\cP((-n+2,1])$.
\end{enumerate}
\end{Lem}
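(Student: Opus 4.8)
The plan is to prove part (2), the statement for a coherent sheaf $F$, first, and then deduce part (1) from it by truncation. The basic tool is that skyscrapers detect the standard cohomology sheaves: if $m$ is the top degree with $\cH^m(G)\neq 0$, then truncation gives a canonical isomorphism $\Hom(G,\cO_p[-m])\cong\Hom(\cH^m(G),\cO_p)$, which is nonzero for any $p\in\supp\cH^m(G)$. As $\cO_p\in\cP(1)$ we have $\cO_p[-m]\in\cP(1-m)$, so a nonzero morphism forces $\phi^-(G)\leq 1-m$. From this I would extract one clean monotonicity lemma: \emph{if $\cH^{<k}(G)=0$ then $\phi^+(G)\leq 1-k$}. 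Indeed, the top Harder--Narasimhan factor $B$ of $G$ admits a nonzero map $B\to G$; were $\phi(B)>1-k$, the displayed inequality applied to $B$ would put its cohomology in degrees $\leq k-1$, and then $\Hom(B,G)=0$ since $G$ has no cohomology in degrees $<k$, a contradiction. With $k=0$ this already gives $\phi^+(F)\leq 1$ for every sheaf $F$, the upper bound in part (2); applied to $F\in\cP((0,1])$ (so $\phi^-(F)>0$) it gives $\cH^{>0}(F)=0$.

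For the lower bounds in part (2) I would pass to the dual stability condition. Let $\mathbb{D}=\RlHom(-,\cO_X)$ be the derived dualizing functor; it is an anti-autoequivalence with $\mathbb{D}\cO_p\cong\cO_p[-n]$, and it carries $\sigma$ to a stability condition $\sigma^\vee$ (support property is preserved) which is again geometric, now with skyscraper phase $n-1$ and satisfying $\phi^+_{\sigma^\vee}(\mathbb{D}E)=-\phi^-_\sigma(E)$. For a sheaf $F$ one has $\cH^{<0}(\mathbb{D}F)=0$, so the monotonicity lemma in $\sigma^\vee$ (with $k=0$) gives $\phi^+_{\sigma^\vee}(\mathbb{D}F)\leq n-1$, i.e. $\phi^-_\sigma(F)\geq -n+1$. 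If moreover $F$ is torsion then $\mathcal{H}om(F,\cO_X)=0$, hence $\cH^{<1}(\mathbb{D}F)=0$, and the lemma with $k=1$ sharpens this to $\phi^-_\sigma(F)\geq -n+2$.

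The main obstacle is upgrading these to the \emph{strict} inequalities, which is exactly what the torsion-freeness assertion needs. Suppose, for contradiction, that $\phi^-_\sigma(F)=-n+1$ for a sheaf $F$; then $\mathbb{D}F$ has a $\sigma^\vee$-semistable Harder--Narasimhan factor $B$ of the extremal phase $n-1$ with $\Hom(B,\mathbb{D}F)\neq 0$ and (by the lemma) $\cH^{>0}(B)=0$, forcing $\cH^0(B)\neq 0$. Since $\cH^0(B)$ surjects onto some $\cO_p$ and $\Hom(B,\cO_p)\cong\Hom(\cH^0(B),\cO_p)$, while $\cO_p$ is a simple object of the slice $\cP_{\sigma^\vee}(n-1)$, one can peel off copies of $\cO_p$; at each step the mass drops by $m(\cO_p)>0$, so the process terminates, and $B$ becomes a finite extension of $0$-dimensional sheaves by an object in $\cH^{\le -1}$, whence $\cH^0(B)$ is $0$-dimensional. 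But $\Hom(B,\mathbb{D}F)\cong\Hom(\cH^0(B),\mathcal{H}om(F,\cO_X))$ and $\mathcal{H}om(F,\cO_X)$ is torsion-free, so it admits no nonzero map from a $0$-dimensional sheaf --- contradiction. The torsion case is identical after a shift by $[1]$ (peeling off copies of $\cO_p[-1]$), using that $\mathcal{E}xt^1(F,\cO_X)$ is pure of codimension one and hence has no $0$-dimensional subsheaf. I expect this mass-descent-plus-purity step to be the technical heart of the argument.

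Finally, part (1) follows from part (2) by truncation. For $F\in\cP((0,1])$ the projection $F\to\cH^m(F)[-m]$ onto the top cohomology, together with $\phi^+(\cH^m(F))\leq 1$, gives $\phi^-(F)\leq 1-m$, whence $m\leq 0$. Dually, the inclusion $\cH^\mu(F)[-\mu]\to F$ of the bottom cohomology, together with $\phi^+(F)\leq 1$ and the strict bound $\phi^-(\cH^\mu(F))>-n+1$, gives $\phi^-(\cH^\mu(F))\leq 1+\mu$ and hence $\mu\geq -n+1$. If $\cH^{-n+1}(F)$ had a nonzero torsion subsheaf $T$, the induced map $T[n-1]\to F$ would give $\phi^-(T)\leq -n+2$, contradicting the strict torsion bound $\phi^-(T)>-n+2$; thus $\cH^{-n+1}(F)$ is torsion-free.
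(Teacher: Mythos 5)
Your proof is correct, but it takes a genuinely different route from the paper's. The paper proves part (a) first: assuming $F$ stable and not a skyscraper, stability against the $\cO_p$ plus Serre duality give $\Hom(F,\cO_p[i])=0$ unless $0\leq i\leq n-1$, and then it simply cites \cite[Proposition 5.4]{Bridgeland-Maciocia:K3Fibrations} to conclude that $F$ is a length-$n$ complex of locally free sheaves --- which yields both the cohomological range and the torsion-freeness of $\cH^{-n+1}(F)$ for free, since that sheaf is a kernel of a map of bundles; part (b) then follows by phase--degree bookkeeping (including a splitting argument for the torsion refinement). You reverse the order: you prove the sheaf bounds of part (b) first, and with the \emph{strict} inequalities, via the dual stability condition induced by $\RlHom(-,\cO_X)$ together with your monotonicity lemma, and you obtain strictness by a mass-descent argument peeling skyscrapers off the extremal-phase HN factor, finishing with purity of $\mathcal{H}om(F,\cO_X)$, resp.\ of $\mathcal{E}xt^1(F,\cO_X)$; part (a) then follows by truncation. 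In effect you reprove the relevant content of Bridgeland--Maciocia's proposition inside the stability formalism, so your argument is self-contained where the paper outsources the homological input, at the cost of setting up the dual (pre-)stability condition and the peeling induction. Three points you gloss over are all fine but deserve a line each in a write-up: (i) that the dualized slicing $\cP^\vee(\phi)=\RlHom(\cP(-\phi),\cO_X)$ with conjugated central charge is again a (pre-)stability condition with skyscrapers stable of phase $n-1$ is a routine check; (ii) the nonvanishing of the composites such as $T[n-1]\to F$ and $\cH^\mu(F)[-\mu]\to F$ follows from functoriality of cohomology sheaves (they induce the identity, resp.\ the inclusion, on the relevant $\cH^i$); and (iii) for a \emph{non-pure} torsion sheaf $F$ the purity of $\mathcal{E}xt^1(F,\cO_X)$ is not immediate, but since $\mathcal{E}xt^{\leq 1}$ of the codimension-$\geq 2$ part of $F$ vanishes, it agrees with $\mathcal{E}xt^1$ of the pure codimension-one quotient, which is pure by \cite[Proposition 1.1.10]{HL:Moduli}. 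Finally, note that (like the paper's reduction ``$F$ stable and not a skyscraper'') your strictness step silently requires $n\geq 2$; for $n=1$ the torsion-freeness claim fails at skyscrapers, and consistently your purity input ($0$-dimensional versus pure of codimension one) degenerates there, so no harm is done.
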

\begin{proof}
\emph{(a)} One may assume that $F$ is stable and is not a skyscraper sheaf. For any point $p\in X$, since $\cO_p$ is stable with phase $1$,  $\Hom(\cO_p,F[i])=\Hom(F,\cO_p[i-1])=0$ for all $i\leq 0$. By Serre duality, $\Hom(F,\cO_p[i])=0$ unless $0\leq i\leq n-1$.

It follows from  \cite[Proposition 5.4]{Bridgeland-Maciocia:K3Fibrations} that $F$ is quasi-isomorphic to a length $n$ complex of locally free sheaves. So $F$ satisfies the condition in part \emph{(a)}.\\

\emph{(b)} Let $F$ be a coherent sheaf. For every object $E\in\cP(>1)$, by part \emph{(a)}, $\cH^i(E)=0$ when $i\geq 0$. So $\Hom(E,F)=0$. For every object $G\in \cP(\leq -n+1)$, by part \emph{(a)}, $\cH^i(G)=0$ when $i\leq 0$. So $\Hom(F,G)=0$. It follows that $F\in\cP((-n+1,1])$. 

Now let $F$ be a torsion sheaf. We have a distinguished triangle
$$E\rightarrow F\rightarrow G\xrightarrow{+1}$$
for some $E\in\cP((-n+2,1])$ and $G\in\cP((-n+1,-n+2])$. By part \emph{(a)}, $\cH^i(G)=0$ unless $0\leq i\leq n-1$, and $\cH^{0}(G)$ is torsion-free. Since $F$ is a torsion sheaf, $\Hom(F,G)=0$. This can only happen when $G=0$, so $F\in \cP((-n+2,1])$.
\end{proof}

\subsection{Stability of skyscraper sheaves}

The following basic result is used in the study of homogeneous vector bundles on abelian varieties.  
\begin{Lem}
\label{lem:FiniteSupport}
Let $A$ be an abelian variety and $E\in \Db(A)$. Further assume that for any $\xi\in A^\vee$, we have that $E\otimes P_\xi\simeq E$, where $P_\xi$ is the line bundle on $A$ parameterized by $\xi$. Then the support of $E$ is finite.
\end{Lem}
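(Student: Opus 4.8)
The plan is to transport the homogeneity hypothesis to the dual abelian variety via the Fourier–Mukai equivalence and read off finiteness of support there. Let $\cP$ denote the normalized Poincar\'e bundle on $A\times A^\vee$ and let $\Phi\colon \Db(A)\to\Db(A^\vee)$ be the associated Fourier–Mukai functor, which is an equivalence by Mukai's theorem. Mukai's exchange rule converts tensoring by a degree-zero line bundle on $A$ into translation on $A^\vee$: concretely $\Phi(E\otimes P_\xi)\cong t_\xi^*\,\Phi(E)$, where $t_\xi\colon A^\vee\to A^\vee$ is translation by $\xi$. Writing $G\coloneqq\Phi(E)$, the hypothesis $E\otimes P_\xi\simeq E$ for all $\xi\in A^\vee$ then yields $t_\xi^* G\cong G$ for every $\xi\in A^\vee$; that is, $G$ is invariant under all translations of $A^\vee$.

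Next I would analyze this translation invariance at the level of cohomology sheaves. Since translation is exact, each $\cH^i(G)$ satisfies $t_\xi^*\cH^i(G)\cong\cH^i(G)$ for all $\xi$. For a coherent sheaf on an abelian variety this forces local freeness: the torsion subsheaf is canonical, hence translation-invariant, so its support is a translation-invariant proper closed subset and is therefore empty; likewise the non-locally-free locus of the resulting torsion-free sheaf is translation-invariant of positive codimension, hence empty. Consequently each $\cH^i(G)$ is a homogeneous (translation-invariant) vector bundle on $A^\vee$, or zero.

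Finally I would transport this conclusion back through $\Phi^{-1}$. By Mukai's classification, a homogeneous vector bundle on $A^\vee$ is exactly the Fourier–Mukai transform of a finite-length sheaf on $A$; equivalently, $\Phi^{-1}$ sends a homogeneous vector bundle to an object with finite support. Filtering $G$ by its cohomology sheaves gives distinguished triangles which, after applying the exact functor $\Phi^{-1}$, show $\supp(\Phi^{-1}G)\subseteq\bigcup_i \supp\bigl(\Phi^{-1}\cH^i(G)\bigr)$. Since $E$ is recovered from $G$ by the quasi-inverse equivalence—which only involves $\Phi$ for the dual together with a shift and the involution $(-1)^*$, all of which preserve finiteness of support—and each term on the right is finite, the support of $E$ is finite.

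The main obstacle, and really the only nontrivial input, is the structural fact that translation-invariant coherent sheaves on an abelian variety are precisely the homogeneous vector bundles and that these are exactly the Fourier–Mukai transforms of finite-length sheaves; both are due to Mukai, while the exchange rule and the support inclusion along the cohomology filtration are formal. One should also note that the sign conventions in the exchange rule are irrelevant here: as $\xi$ ranges over all of $A^\vee$, so does $-\xi$, so the conclusion that $G$ is translation-invariant is unaffected.
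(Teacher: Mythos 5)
Your proof is correct and follows essentially the same route as the paper, which simply cites the proof of \cite[Proposition 11.8]{Polishchuk:book}: the Fourier--Mukai exchange of tensoring by $P_\xi$ with translation on $A^\vee$, the observation that translation-invariant cohomology sheaves are homogeneous vector bundles, and Mukai's classification of these as transforms of finite-length sheaves, transported back through the (quasi-)inverse equivalence. Your write-up merely spells out the details (torsion and non-locally-free locus arguments, the support inclusion along the truncation triangles) that the citation leaves implicit.
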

\begin{proof}
See for example the proof of \cite[Proposition 11.8]{Polishchuk:book}.
\end{proof}

\begin{Thm}
Let $X$ be a smooth projective variety such that its Albanese morphism is finite, then skyscraper sheaves are all stable with respect to every numerical stability condition on $\Db(X)$.
\label{thm:alwaysgeostab}
\end{Thm}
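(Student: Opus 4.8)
The plan is to exploit the action of $\Pic^0(X)$ on $\Stab(X)$ together with the finiteness of the Albanese morphism $a\colon X\to\Alb(X)$. Recall that every $L\in\Pic^0(X)$ has the form $a^*P_\xi$ for some $\xi\in\Alb(X)^\vee$, and gives an autoequivalence $T_L=(-\otimes L)$ of $\Db(X)$; since $L$ is numerically trivial, $T_L$ fixes all central charges. By \cite[Corollary 3.5.2]{Polishchuk:families-of-t-structures} the induced action on $\Stab(X)$ is trivial, so for a fixed numerical stability condition $\sigma=(\cP,Z)$ the functor $T_L$ preserves every slice $\cP(\phi)$, and hence preserves semistability and phases.

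First I would record that a skyscraper satisfies $\cO_p\otimes L\cong\cO_p$, because $L$ is trivial in a neighbourhood of $p$. As $T_L$ preserves the slicing, it carries the Harder--Narasimhan filtration of $\cO_p$ to that of $\cO_p\otimes L\cong\cO_p$; by uniqueness of HN filtrations (Remark \ref{rem:slicing}) each HN factor satisfies $A_i\otimes L\cong A_i$ for all $L\in\Pic^0(X)$, i.e. $A_i\otimes a^*P_\xi\cong A_i$ for all $\xi$. By the projection formula $\mathbf{R}a_*A_i\otimes P_\xi\cong\mathbf{R}a_*(A_i\otimes a^*P_\xi)\cong\mathbf{R}a_*A_i$, so Lemma \ref{lem:FiniteSupport} shows $\mathbf{R}a_*A_i$ has finite support on $\Alb(X)$; since $a$ is \emph{finite}, $\supp(A_i)\subseteq a^{-1}(\supp(\mathbf{R}a_*A_i))$ is finite as well. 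This is the only place the hypothesis enters.

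Next I would pin the support down to $p$. Every $A_i$, and hence every term $E_i$ of the filtration, is $0$-dimensionally supported. Since objects with disjoint finite support admit no morphisms in either direction, the finitely supported part of $\Db(X)$ splits as $\bigoplus_{q}\Db_{\{q\}}(X)$ and the slicing respects this splitting; projecting the HN filtration of $\cO_p$ onto the summand at a point $q$ computes the HN filtration of the $q$-component of $\cO_p$. As $\cO_p$ is supported only at $p$, the $q$-components vanish for $q\neq p$, so every $A_i$ is supported precisely at $p$.

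Finally I would conclude stability. Any object supported at $p$ has class in $\Z[\cO_p]$, so $Z(A_i)=n_i\,Z(\cO_p)$ with $n_i\in\Z$; semistability forces $Z(\cO_p)\neq0$ and all phases $\phi_i$ congruent modulo $1$. Writing $A_i=B_i[t_i]$ with $B_i\in\cP(\psi_0)$, $\psi_0\in[0,1)$, and $t_1>\dots>t_m$, the filtration exhibits the cohomology of $\cO_p$ for the heart $\cP((\psi_0-1,\psi_0])$ in degrees $-t_1<\dots<-t_m$. The task is to show $m=1$, and \textbf{this is the main obstacle}: the numerical data only pin the factors to a single phase class, and a nontrivial spread of the $t_i$ must be excluded by feeding in that $\cO_p$ is a length-one (hence simple) sheaf concentrated in a single standard degree --- e.g. via a cohomological-amplitude comparison between $\cP((\psi_0-1,\psi_0])$ and $\Coh(X)$, or equivalently by ruling out the nonzero negative self-extensions such a spread would force on a sheaf. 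Once $\cO_p$ is known to be semistable, stability follows formally: on the finite-length category $\cP(\phi)\cap\Db_{\{p\}}(X)$ the Euler characteristic is a positive, integer-valued length function with value $1$ on $\cO_p$, so $\cO_p$ is simple, i.e. stable.
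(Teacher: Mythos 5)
Your first half is exactly the paper's argument: Polishchuk's \cite[Corollary 3.5.2]{Polishchuk:families-of-t-structures} to make the $\Pic^0(X)$-action trivial, invariance of the HN and Jordan--H\"older factors under $-\otimes L$, the projection formula together with Lemma \ref{lem:FiniteSupport}, and finiteness of the Albanese morphism to conclude that every factor of $\cO_p$ has finite support. Your support-splitting step and the final length-counting (once semistability is known) are also sound. But there is a genuine gap precisely where you flag it: you never prove $m=1$, i.e.\ the semistability of $\cO_p$, and neither of your suggested repairs works as stated. A ``cohomological-amplitude comparison between $\cP((\psi_0-1,\psi_0])$ and $\Coh(X)$'' is the content of Lemma \ref{lem:geohearts}, whose proof already assumes the stability condition is geometric --- exactly what is being established --- so invoking it here is circular. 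And a spread $t_1>\dots>t_m$ of shifts does not visibly force nonzero negative self-extensions on $\cO_p$ itself: the factors $B_i[t_i]$ assemble to $\cO_p$ only through a filtration, and $\Hom(\cO_p,\cO_p[-j])=0$ is perfectly compatible with a nontrivial filtration by shifted pieces, so there is no direct contradiction to extract from the skyscraper.

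The paper closes this gap with Lemma \ref{lem:arend} (Bayer's shortcut), which applies the negative-self-extension vanishing not to $\cO_p$ but to a \emph{stable} object: if $\cO_p$ is not stable, take a stable object $E\neq \cO_p$ supported at $p$ (e.g.\ a Jordan--H\"older factor, whose support is $\{p\}$ by your own splitting argument), let $k\geq l$ be its maximal and minimal nonvanishing cohomology degrees, and form the composition $E\to \cH^k(E)[-k]\to \cO_p[-k]\to \cH^l(E)[-k]\to E[l-k]$, where the two middle arrows can be chosen nonzero (a finite-length sheaf at $p$ surjects onto $\cO_p$ and receives an injection from $\cO_p$). The composite is nonzero because it is nonzero from the top cohomology of $E$ to the bottom cohomology of $E[l-k]$; since a stable object has no maps to its negative shifts and $\End(E)=\C$, this forces $k=l$ and $E\cong \cH^k(E)[-k]\cong \cO_p[-k]$, contradicting $E\neq\cO_p$ up to the stability of $\cO_p$. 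So your reduction is correct and runs along the paper's track, but the decisive step is missing, and the hints offered in its place do not complete it.
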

\begin{proof}
Fix an arbitrary numerical stability condition on $\Db(X)$. Let $p\in X$ be a closed point. Let $E_1, \dots, E_n$ be the Jordan--H\"older factors in the Harder--Narasimhan filtration associated with $\cO_p$. By Polishchuk \cite[Corollary 3.5.2]{Polishchuk:families-of-t-structures}, the group $\Pic^0(X)\subset \Aut(\Db(X))$ acts trivially on the stability manifold. In particular, the Harder--Narasimhan filtration, as well as the Jordan--H\"older factors, are preserved by the action of $\Pic^0(X)$. As a consequence, each $E_i$ satisfies the condition that $E_i\otimes L\simeq E_i$ for any $L\in \Pic^0(X)$.

Let $a:X\rightarrow \Alb(X)$ be the Albanese morphism. For any $\xi\in \Pic^0(\Alb(X))$, denote by $P_\xi$ the corresponding line bundle on $\Alb(X)$. Then for any $i$, we have $E_i\otimes a^*(P_\xi)\simeq E_i$. By the projection formula, 
$$Ra_*(E_i)\otimes P_\xi \simeq Ra_*(E_i),$$
which implies that the support of $Ra_*(E_i)$ is finite by Lemma \ref{lem:FiniteSupport}. Combined with the assumption that $a$ is a finite morphism, we see that the support of $E_i$ is also finite, for any $i$.

The theorem then follows by Lemma \ref{lem:arend}.
\end{proof}

\begin{Lem}\label{lem:arend}
Let $X$ be a smooth variety and $\sigma$ a stability condition on $\Db(X)$. Assume that the Jordan--H\"older factors of $\cO_p$ have finite support. Then $\cO_p$ is stable.
\end{Lem}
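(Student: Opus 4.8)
The plan is to reduce the statement to a purely \emph{local} assertion at $p$, where the numerical lattice has rank one, and then to exploit the primitivity of the class $[\cO_p]$. The main work is in proving semistability; once that is available, stability is almost immediate.

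\emph{Step 1 (classes and supports of the factors).} Each Jordan--H\"older factor $S$ of $\cO_p$ is $\sigma$-stable, hence indecomposable, and has $0$-dimensional support by hypothesis. Since an object with finite support splits as the direct sum of its localizations at the points of its support (there are no morphisms between objects supported at distinct points), $S$ must be supported at a single closed point. Moreover every object with $0$-dimensional support has numerical class a multiple of $[\cO_p]$ (only $\ch_{\dim X}$ survives), so $[S]=\ell\,[\cO_p]$ for some $\ell\in\Z\setminus\{0\}$; hence $Z(S)\in\R^\times\cdot Z(\cO_p)$ and $\phi(S)\in\phi_0+\Z$, where $e^{i\pi\phi_0}=Z(\cO_p)/\abs{Z(\cO_p)}$. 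The same applies to the semistable Harder--Narasimhan factors $A_1,\dots,A_m$ of $\cO_p$, with phases $\phi_1>\dots>\phi_m$ all lying in $\phi_0+\Z$.

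\emph{Step 2 (localization at $p$).} I would first show that every $A_i$ is actually supported at $p$. Fix $q\ne p$ and restrict the HN filtration of $\cO_p$ to a small open neighbourhood $U$ of $q$ with $p\notin U$. Restriction is exact and kills $\cO_p$, so it yields a filtration of the zero object whose graded pieces are the $A_i|_U$; each $A_i|_U$ is a direct summand of $A_i$, hence is semistable of phase $\phi_i$ or zero, and the nonzero ones still have strictly decreasing phases. By uniqueness of HN filtrations (applied to $0$) all $A_i|_U$ vanish, so $q\notin\supp(A_i)$. Thus all factors, and by the same argument the HN factors of \emph{every} object supported at $p$, lie in the triangulated subcategory $\cD_p\subset\Db(X)$ of objects set-theoretically supported at $p$; consequently the slicing of $\sigma$ restricts and induces a stability condition $\sigma_p$ on $\cD_p$. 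The decisive gain is that the central charge of $\sigma_p$ factors through the rank-one lattice generated by the \emph{primitive} class $[\cO_p]$, and that $\cO_p$ is the unique simple object of the standard heart $\Coh_p\subset\cD_p$ of finite-length sheaves at $p$.

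\emph{Step 3 (stability from semistability).} Granting that $\cO_p$ is $\sigma_p$-semistable, say $\cO_p\in\cP_{\sigma_p}(\psi)$, work in the finite-length heart $\cP_{\sigma_p}((\psi-1,\psi])$. The simple ($=\sigma_p$-stable) factors $S_1,\dots,S_r$ of $\cO_p$ there have phase exactly $\psi$, so $Z(S_j)\in\R_{>0}\cdot Z(\cO_p)$; writing $[S_j]=\ell_j\,[\cO_p]$ then forces $\ell_j\in\Z_{\ge 1}$. Comparing classes in the rank-one group, $[\cO_p]=\sum_j[S_j]=\big(\sum_j\ell_j\big)[\cO_p]$ gives $\sum_j\ell_j=1$, whence $r=1$ and $\ell_1=1$; that is, $\cO_p=S_1$ is $\sigma_p$-stable, and a fortiori $\sigma$-stable.

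\emph{Step 4 (the crux: semistability).} It remains to show that the HN filtration of $\cO_p$ in $\cD_p$ is trivial, and this is the step I expect to be the main obstacle. The mechanism I would try is that $\cO_p$ is the \emph{unique} simple object of the standard heart of $\cD_p$, and a simple object lies entirely in the torsion or the torsion-free part of any torsion pair, hence survives as (a shift of) a simple, in particular as a semistable object, under tilting; since the heart of $\sigma_p$ should be obtainable from the standard heart by tilting, $\cO_p$ would remain semistable. The delicate point to pin down is precisely that the heart of $\sigma_p$ is reachable from the standard heart, equivalently that the unique simple of $\cD_p$ is semistable for \emph{every} stability condition on this rank-one, Calabi--Yau-of-dimension-$n$ category. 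A naive Serre-duality estimate only bounds the phase spread $\phi_1-\phi_m$ of $\cO_p$ by $n=\dim X$, so collapsing it to $0$ genuinely uses the rank-one constraint together with the special (Koszul/exterior) self-extension structure of the residue field. Once semistability is established, Step 3 finishes the proof.
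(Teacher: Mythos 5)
Your Steps 1--3 are sound (the single-point support of stable factors, the restriction argument showing all HN factors of $\cO_p$ live in the local subcategory $\cD_p$, and the deduction of stability from semistability via the rank-one class computation), but Step 4 is a genuine gap, and it is exactly the content of the lemma. You reduce everything to the claim that $\cO_p$ is semistable for the induced stability condition on $\cD_p$, and then assert that this should follow because the heart of $\sigma_p$ is ``reachable from the standard heart by tilting.'' That reachability statement is unproven and is at least as hard as the lemma itself: hearts of stability conditions are in general not a single tilt away from the standard heart, an iterated tilt does not preserve simplicity of an object, and on a category like $\cD_p$ (a Calabi--Yau-$n$ local category) one cannot simply classify all bounded t-structures. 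Your own closing sentence concedes that the collapse of the phase spread ``genuinely uses'' structure you have not supplied. So the proposal, as written, does not prove the statement.

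The paper's proof bypasses your reduction entirely and supplies the missing idea in a few lines: suppose $\cO_p$ is not stable, and let $E\neq \cO_p$ be a stable Jordan--H\"older factor supported at $p$. With $k,l$ the maximal and minimal nonvanishing cohomology degrees of $E$, the sheaves $\cH^k(E)$ and $\cH^l(E)$ are finite-length at $p$, so there are nonzero maps $\cH^k(E)\to\cO_p$ (onto a simple quotient) and $\cO_p\to\cH^l(E)$ (into the socle). The composite
$E \to \cH^k(E)[-k] \to \cO_p[-k] \to \cH^l(E)[-k] \to E[l-k]$
is nonzero because it acts nontrivially from the top cohomology of $E$ to the bottom cohomology of $E[l-k]$. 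Since $l\leq k$ and $E$ is stable, $\Hom(E,E[l-k])\neq 0$ forces $l=k$ and the composite to be a scalar multiple of the identity; hence $E$ is a direct summand of $\cO_p[-k]$, so $E\cong\cO_p[-k]$, making $\cO_p$ stable --- a contradiction. Note that this uses only the top/socle maps for finite-length modules together with the Hom-vanishing coming from stability ($\Hom(E,E[m])=0$ for $m<0$, $\Hom(E,E)=\C$); no induced stability condition on $\cD_p$, no tilting, and no Koszul or Calabi--Yau input. If you want to salvage your route, this top-and-socle composition, applied to a putative stable factor inside $\cD_p$, is precisely what would replace your Step 4.
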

\begin{proof}

Suppose $\cO_p$ is not stable, then there exists a stable object $E\neq \cO_p$ supported at $p$.  Let $k, l$ be the maximal and minimal non-vanishing cohomology degrees of $E$. Then there exists composition of morphisms: $$E \xrightarrow{\mathrm{can}} \cH^k(E)[-k] \xrightarrow{\iota_1} \cO_p[-k] \xrightarrow{\iota_2} \cH^l(E)[-k] \xrightarrow{\mathrm{can}} E[l-k].$$
As both $\cH^k(E)$ and $\cH^l(E)$ are supported at $p$, we may let $\iota_i$'s be non-zero. Their composition  $\iota_2\circ\iota_1$ is then non-zero as well.

The whole composition is non-zero because it induces a non-zero map from the term of $E$ with maximal cohomology degree to the term of $E[l-k]$ with minimal cohomology degree. Since $k \geq l$, by the stability of $E$, we may only have $k=l$, and $E=\cH^k(E)[-k]=\cO_p[-k]$ which is stable. This leads to a contradiction.
\end{proof}

\begin{Cor}\label{cor:albfiniteimplygeom}
Let $X$ be a connected smooth projective variety such that its Albanese morphism is finite. Then all numerical stability conditions on $\Db(X)$ are geometric.
\end{Cor}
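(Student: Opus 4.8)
The plan is to combine the two main ingredients established just above: the stability of all skyscraper sheaves (Theorem \ref{thm:alwaysgeostab}) and the automatic equality of their phases on a connected variety (Proposition \ref{prop:skyscrapersamephase}). Concretely, I would fix an arbitrary numerical stability condition $\sigma$ on $\Db(X)$ and verify the two clauses of Definition \ref{def:geostab} in turn.

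First, the hypothesis that the Albanese morphism $a\colon X\to \Alb(X)$ is finite feeds directly into Theorem \ref{thm:alwaysgeostab}, whose conclusion is precisely that every skyscraper sheaf $\cO_p$ is $\sigma$-stable. This settles the first clause of the definition. The conceptual content here---that the $\Pic^0(X)$-invariance of $\cO_p$ forces its Harder--Narasimhan and Jordan--H\"older factors to be $\Pic^0(X)$-invariant, hence supported on the fibres of $a$, hence finite by finiteness of $a$, hence skyscrapers---has already been extracted in that theorem, so at the level of the corollary this step is simply a citation.

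Second, having established that every skyscraper sheaf is $\sigma$-stable, I would invoke Proposition \ref{prop:skyscrapersamephase}, which uses only that $X$ is connected (as it is, by hypothesis) to upgrade ``all $\cO_p$ are stable'' to ``all $\cO_p$ are stable of the same phase.'' These two clauses together are exactly the definition of $\sigma$ being geometric. Since $\sigma$ was arbitrary, every numerical stability condition on $\Db(X)$ is geometric.

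There is no genuine obstacle at the level of the corollary itself: all the work has been done in Theorem \ref{thm:alwaysgeostab} and Proposition \ref{prop:skyscrapersamephase}. If one wants to locate where the real difficulty sits in the overall argument, it is in those two statements---specifically, in the input from Polishchuk on the triviality of the $\Pic^0(X)$-action driving the stability, and in the Harder--Narasimhan truncation bookkeeping of Lemma \ref{lem:HNfactorcontrol} powering the equal-phase statement---but both are already available, so the corollary follows formally.
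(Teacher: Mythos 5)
Your proof is correct and matches the paper exactly: the paper's proof of Corollary \ref{cor:albfiniteimplygeom} is precisely the one-line combination of Theorem \ref{thm:alwaysgeostab} (stability of all skyscraper sheaves, via the finite Albanese hypothesis) with Proposition \ref{prop:skyscrapersamephase} (equality of phases on a connected variety). Your citations of the two inputs and your division of labor between them are the same as the paper's.
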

\begin{proof}
It is a combination of Proposition \ref{prop:skyscrapersamephase} and Theorem \ref{thm:alwaysgeostab}.
\end{proof}

Let $A$ be an abelian variety. For every element $\frac{[L]}l\in \mathrm{NS}(A)\otimes_\Z \Q$, there exist  simple semi-homogeneous vector bundles with $\frac{\mathrm{det}}{\rk}=\frac{[L]}l$  in the sense of  Mukai \cite{Mukai:semi-homogeneous}, see \cite[Section 4]{Orlov:abelian_equivalences}. The following result will be useful in studying the stability manifold of abelian threefold in Section 4.

\begin{Cor}[cf. {\cite[Proposition 3.1.4]{Polishchuk:LIobjects}}]
\label{cor:EpqStable}
Let $A$ be an abelian variety. Then all simple semi-homogeneous vector bundles are stable with respect to any numerical stability condition on $\Db(A)$. Moreover, given a numerical stability condition, all simple semi-homogeneous vector bundles with $\frac{\mathrm{det}}{\rk}=\frac{[L]}l$ are with the same phase.
\end{Cor}

\begin{proof}
Denote by $M$ the moduli space parameterizing all simple semi-homogeneous vector bundles with $\frac{\mathrm{det}}{\rk}=\frac{[L]}l$. In particular, $M\cong A^\vee$ by \cite[Theorem 7.11]{Mukai:semi-homogeneous}. Denote by $\cE_{\frac{[L]}l}$ the universal family on $A\times M$. By \cite[Lemma 4.8]{Orlov:abelian_equivalences}, and \cite[Theorem 5.1 and 5.4]{Bridgeland:EqFMT},  the Fourier--Mukai transformation $\Phi_{\cE_{\frac{[L]}l}}$ is an equivalence between $\Db(A)$ and $\Db(M)$.

Note that the property of being stable with respect to any stability condition is preserved under any equivalence. The skyscraper sheaves on $M$ are mapped to all simple semi-homogeneous vector bundles with $\frac{\mathrm{det}}{\rk}=\frac{[L]}l$. The statement now follows from  Theorem \ref{thm:alwaysgeostab}. 
\end{proof}

Note that the stability of these objects is already proved in \cite[Proposition 3.1.4]{Polishchuk:LIobjects}. 

\section{Surface case}
Let $(X,H)$ be a polarized smooth projective variety. We fix the following surjection from $\operatorname{K}(X)$, whose image, denoted by $\Lambda_H$, is clearly a lattice in $\mathbb{R}^n$: 
$$\lambda_H:\mathrm{K}(X)\twoheadrightarrow \Lambda_H; [E]\mapsto (H^n\rk(E),H^{n-1}\ch_1(E),H^{n-2}\ch_2(E),\dots,\ch_n(E)),$$
where $n$ is the dimension of $X$. We denote the set of all (resp. geometric) stability conditions with respect to $\Lambda_H$ as $\Stab_H(X)$ (resp. $\Stab^{\mathrm{Geo}}_H(X)$). In particular, if $S$ is a surface of \nes rank one, then $\Stab_H(S)$ is $\Stab(S)$, the whole stability manifold of $S$. 
\subsection{Le Potier function}
Recall that the $H$-slope of a coherent sheaf $F$ on $X$ is defined as 
$$\mu_H(F):=\begin{cases}\frac{H^{n-1}\ch_1(F)}{H^n\rk(F)}, & \text{ if $\rk(F)>0$;}\\
+\infty, & \text{ if $\rk(F)=0$.}\end{cases}$$
A coherent sheaf $F$ is called $\mu_H$-(semi)stable if for every proper non-zero subsheaf $E$, one has 
$$\mu_H(E)<(\leq)\mu_H(F/E).$$

\begin{Def}\label{def:lpfunction}
Let $(S,H)$ be a polarized surface. We define the \emph{Le Potier function} $\Phi_{S,H}:\mathbb R\rightarrow \mathbb R$ as:
$$\Phi_{S,H}(x)\coloneqq \overline{\lim_{\mu\to x}}\sup_{F\in\Coh(X)}\left\{\frac{\ch_2(F)}{H^2\rk(F)}\middle|\text{ $F$ is $\mu_H$-semistable with }\mu_H(F)=\mu\right\}.$$
\end{Def}
\begin{Prop}\label{defp:lpf}
The Le Potier function is well-defined satisfying $\Phi_{S,H}(x)\leq \frac{x^2}2.$ It is the 
 smallest upper semi-continuous function satisfying 
$$\frac{\mathrm{ch}_2(F)}{H^2\mathrm{rk}(F)}\leq \Phi_{S,H}\left(\frac{H\mathrm{ch}_1(F)}{H^2\mathrm{rk}(F)}\right)$$
for every torsion-free $\mu_H$-stable (or semistable) sheaf $F$.
\end{Prop}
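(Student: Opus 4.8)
The plan is to dispatch the three assertions separately, isolating the single geometric input --- Bogomolov's inequality --- from the soft analysis of upper semi-continuous envelopes that handles the rest. Throughout, write
\[
g(\mu) \coloneqq \sup\left\{\frac{\ch_2(F)}{H^2\rk(F)} \;\middle|\; F \text{ is $H$-semistable with } \mu_H(F)=\mu\right\}
\]
for the ``raw'' function of which $\Phi_{S,H}$ is, by definition, the upper semi-continuous regularization $\overline{\lim}_{\mu\to x} g(\mu)$. For boundedness and well-definedness, I would first note that a finite-slope $H$-semistable sheaf is automatically torsion-free, since a nonzero torsion subsheaf would have slope $+\infty$ and violate semistability. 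For such an $F$ (so $\rk(F)>0$), Bogomolov's inequality gives $\ch_1(F)^2\ge 2\rk(F)\ch_2(F)$, while the Hodge index theorem, using ampleness of $H$, gives $(H\cdot\ch_1(F))^2\ge H^2\cdot\ch_1(F)^2$. Chaining these and dividing by $(H^2\rk(F))^2>0$ produces
\[
\frac{\ch_2(F)}{H^2\rk(F)}\le \frac12\,\mu_H(F)^2 .
\]
Hence $g(\mu)\le \mu^2/2$, so $g$ is finite; since $H$-semistable sheaves of suitable rank and first Chern class realize a dense set of slopes, the $\overline{\lim}$ at any $x$ is taken over a family accumulating at $x$, and continuity of $\mu\mapsto\mu^2/2$ gives $\Phi_{S,H}(x)\le x^2/2<+\infty$. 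This is both well-definedness and the stated bound.

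For upper semi-continuity and the defining inequality, I would simply invoke that $\Phi_{S,H}=\overline{\lim}_{\mu\to x}g(\mu)$ is exactly the upper semi-continuous envelope $g^\ast$ of $g$, namely the smallest upper semi-continuous function dominating $g$. Upper semi-continuity is then built in, and from $g^\ast\ge g$ we read off, for every torsion-free $H$-semistable $F$,
\[
\frac{\ch_2(F)}{H^2\rk(F)}\le g\!\left(\mu_H(F)\right)\le \Phi_{S,H}\!\left(\mu_H(F)\right),
\]
which is the required inequality.

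For minimality, let $\Psi$ be any upper semi-continuous function satisfying the same inequality for all torsion-free $H$-stable sheaves. A Jordan--H\"older reduction lets me pass from stable to semistable sheaves: the factors of an $H$-semistable sheaf of slope $\mu$ are $H$-stable of the same slope $\mu$, and $\frac{\ch_2}{H^2\rk}$ of the sheaf is the mediant of those of its factors, so the mediant inequality propagates the bound from stable to semistable sheaves. Taking the supremum over all $H$-semistable $F$ of slope $\mu$ then gives $g\le\Psi$ pointwise, whence $\Phi_{S,H}=g^\ast\le\Psi$ by the smallest-majorant property of the envelope. This same reduction explains why ``stable'' and ``semistable'' yield the same class of admissible competitors, so the two formulations of the characterization agree.

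The only non-formal step is the opening one, where Bogomolov's inequality and the Hodge index theorem are combined to produce the quadratic bound; this is what makes the $\overline{\lim}$ finite and simultaneously pins the parabola $x^2/2$ as an upper envelope. Everything after that is the standard calculus of upper semi-continuous regularizations together with the Jordan--H\"older mediant argument, and I do not anticipate any real difficulty there.
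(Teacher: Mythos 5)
Your proof is correct and follows essentially the same route as the paper: Bogomolov's inequality combined with the Hodge index theorem yields the bound $\Phi_{S,H}(x)\leq \frac{x^2}{2}$ (hence finiteness), existence of $H$-stable sheaves of every rational slope (the paper cites \cite[Theorem 5.2.5]{HL:Moduli}) guarantees well-definedness, and minimality is read off from the definition as an upper semi-continuous envelope. Your only addition is to spell out the Jordan--H\"older mediant argument showing that the stable and semistable formulations agree, a point the paper subsumes under ``directly by the definition.''
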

\begin{proof}
By \cite[Theorem 5.2.5]{HL:Moduli}, for every rational number $\mu$, there exists a $\mu_H$-stable sheaf $F$ with $\mu_H(F)=\mu$. The value of the function at every point is therefore in $\R\cup\{+\infty\}$.

The Bogomolov inequality states that 
$$2\rk(F)\mathrm{ch}_2(F)\leq  \mathrm{ch}_1(F)^2.$$
for every $\mu_H$-semistable sheaf $F$. Combined with the Hodge Index Theorem, we have 
$$2H^2\rk(F)\mathrm{ch}_2(F)\leq  (H\mathrm{ch}_1(F))^2$$
for every $\mu_H$-semistable sheaf $F$. When $F$ is torsion-free, we can divide both sides by $(H^2\rk(F))^2$, which implies $\Phi_{S,H}(x)\leq \frac{x^2}2$. In particular, the function is well-defined, i.e. valued in $\mathbb{R}$.

The last statement follows directly from the definition of $\Phi_{S,H}$.
\end{proof}

\begin{Rem}
The Le Potier function is only known for very few polarized surfaces. When $S$ is an abelian surface, it is known that $\Phi_{S,H}(x)=\frac{x^2}2$. When $S$ is the projective plane, the function is known thanks to the work \cite{Drezet-LePotier}. The explicit formula for $\Phi_{\mathbf{P}^2}$ is very complicated, see \cite{P2stab} for more details. The function is also known or partially known for polarized K3 surfaces, del Pezzo surfaces, and a few sporadic surfaces like the intersection of a quadric and a quintic $S_{2,5}$ in $\mathbf{P}^4$; see \cite{Zhiji-bghypersurfaces, quintic} for more details. Very recently, Lahoz and Rojas showed in \cite[Example 2.12(2)]{LahozRojaz} that $\Phi_{S,H}(x)=\frac{x^2}2$ for any smooth projective surface with finite Albanese morphism. 
\end{Rem}

\subsection{Tilting construction} In this section, we recall the tilting construction of  stability conditions on polarized surfaces.  We refer to the lecture notes \cite[Section 6]{Emolo-Benjamin:lecture-notes} for more details, and to \cite{Aaron-Daniele} for the original treatment. The same construction gives weak stability conditions on threefolds or higher dimensional varieties. We will only use the threefold case in the next section. Readers interested in more details are referred to \cite{BMT:3folds-BG, BBMT:Fujita}.

Let $(X,H)$ be a polarized variety. For every $\beta\in \R$, we define a pair of subcategories:
\begin{align*}
    \cT_\beta \coloneqq \{F\in \Coh(X)|\text{ any $\mu_H$-semistable factor of } F \text{ satisfies } \mu_H(F)>\beta\}; \\ \cF_\beta \coloneqq \{F\in \Coh(X)|\text{ any $\mu_H$-semistable factor of } F \text{ satisfies } \mu_H(F)\leq \beta\}.
\end{align*}

This is a torsion pair on $\Coh(X)$. We denote the tilted heart as
\[\Coh^\beta(X)\coloneqq \langle \cT_\beta,\cF_\beta[1]\rangle=\left\{F\in \Db(X)\middle| \begin{aligned}  \cH^i(F)=0 \text{ for } i\neq 0,-1;\\
\cH^0(F)\in \cT_\beta;\\
\cH^{-1}(F)\in \cF_\beta.\end{aligned} \right\}.\]

Recall from \cite[Lemma 5.11]{Emolo-Benjamin:lecture-notes} that giving a stability condition $(\cP,Z)$ on a triangulated category is equivalent to giving $(\cA,Z)$, where $\cA = \cP\left((0,1]\right)$ is the heart of a bounded t-structure, compatible with $Z$. 

Now let $(S,H)$ be a polarized surface. For every $\alpha\in \R$, we define a central charge on $\Db(S)$ as follows:
$$Z_{\alpha,\beta}(F)\coloneqq (-\ch_2(F)+\alpha H^2\rk(F))+i(H\ch_1(F)-\beta H^2\rk(F)).$$

\begin{Thm}[{\cite[Theorem 6.10]{Emolo-Benjamin:lecture-notes}}]\label{thm:tiltfamily}
For every $\alpha>\Phi_{S,H}(\beta)$, the pair $\sigma_{\alpha,\beta}=(\Coh^\beta(S),Z_{\alpha,\beta})$ is a geometric stability condition (with respect to $\Lambda_H$) on $S$. Moreover, the map
$$\Sigma:\{(\alpha,\beta)\in \R^2|\alpha>\Phi_{S,H}(\beta)\}\rightarrow \Stab_H(S): (\alpha,\beta)\mapsto \sigma_{\alpha,\beta}$$
is a continuous embedding.
\end{Thm}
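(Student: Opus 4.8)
The plan is to apply Bridgeland's criterion: a pair $(\cA,Z)$ with $\cA$ a heart of a bounded t-structure defines a stability condition exactly when $Z$ is a stability function on $\cA$ with the Harder--Narasimhan property and the support property holds. So I would verify these three properties for $(\Coh^\beta(S),Z_{\alpha,\beta})$ and then separately check geometricity and the statement about $\Sigma$. For the stability function property I would show $Z_{\alpha,\beta}(E)\in\H\cup\R_{<0}$ for every nonzero $E\in\Coh^\beta(S)$. By additivity it suffices to treat $\cH^0(E)\in\cT_\beta$ and $\cH^{-1}(E)[1]$ with $\cH^{-1}(E)\in\cF_\beta$ separately. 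For $F\in\cT_\beta$ of positive rank, $\Im Z_{\alpha,\beta}(F)=H^2\rk(F)(\mu_H(F)-\beta)>0$; torsion sheaves have $\Im Z_{\alpha,\beta}\geq 0$, with equality exactly for point sheaves, where $\Real Z_{\alpha,\beta}=-\ch_2<0$. For $F\in\cF_\beta$ (automatically torsion-free) one gets $\Im Z_{\alpha,\beta}(F[1])=-H^2\rk(F)(\mu_H(F)-\beta)\geq 0$, and the only boundary case is when all $H$-semistable factors of $F$ have slope exactly $\beta$; then $\Real Z_{\alpha,\beta}(F[1])=\ch_2(F)-\alpha H^2\rk(F)$, which is negative because $\ch_2(F)/(H^2\rk(F))\leq \Phi_{S,H}(\beta)<\alpha$ by Definition \ref{def:lpfunction}. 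This is the \emph{unique} place where the hypothesis $\alpha>\Phi_{S,H}(\beta)$ enters, and it is sharp. Summing the two pieces, if $\Im Z_{\alpha,\beta}(E)=0$ then both pieces have vanishing imaginary part and, when nonzero, strictly negative real part, so $Z_{\alpha,\beta}(E)\in\R_{<0}$; otherwise $\Im Z_{\alpha,\beta}(E)>0$.

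The Harder--Narasimhan property I would deduce from Bridgeland's general existence criterion \cite{Bridgeland:Stab}, using that $\Coh^\beta(S)$ is Noetherian and that the class map lands in the discrete lattice $\Lambda_H$, so that $\Im Z_{\alpha,\beta}\geq 0$ forbids the infinite chains that would obstruct the filtrations. For the support property I would take the Bogomolov discriminant $\overline{\Delta}_H(E)=(H\ch_1(E))^2-2H^2\rk(E)\ch_2(E)$ as candidate quadratic form. For $\alpha>\beta^2/2$ (above the parabola) the kernel of $Z_{\alpha,\beta}$ is the ray spanned by $(H^2\rk,H\ch_1,\ch_2)=(1,\beta,\alpha)$, on which $\overline{\Delta}_H=\beta^2-2\alpha<0$, so $\overline{\Delta}_H$ is negative definite on $\Ker Z_{\alpha,\beta}$; combined with $\overline{\Delta}_H(E)\geq 0$ for tilt-semistable $E$ (inherited from the classical Bogomolov inequality for the cohomology sheaves), this yields the support property directly in that region.

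For geometricity, note $Z_{\alpha,\beta}(\cO_p)=-1$ for every point $p$, so $\cO_p$ has phase $1$; and any subobject of $\cO_p$ in $\Coh^\beta(S)$ is forced, via the long exact sequence in cohomology sheaves together with $\cT_\beta\cap\cF_\beta=0$, to be a genuine subsheaf, hence $0$ or $\cO_p$ by simplicity of $\cO_p$, so $\cO_p$ is stable. As all skyscrapers share the central charge $-1$, they are stable of equal phase and $\sigma_{\alpha,\beta}$ is geometric. Finally, $(\alpha,\beta)\mapsto Z_{\alpha,\beta}$ is an affine injection of $\R^2$ into $\Hom_\Z(\Lambda_H,\C)$ — one recovers $\beta$ and $\alpha$ from the coefficients of the $H^2\rk$-component in $\Im Z_{\alpha,\beta}$ and $\Real Z_{\alpha,\beta}$ — hence a topological embedding onto a real $2$-plane. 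Since $\pi_Z$ is a local biholomorphism (Theorem \ref{thm:spaceasamfd}) and $\pi_Z\circ\Sigma$ equals this embedding, $\Sigma$ is continuous and injective with continuous local inverse, i.e. a continuous embedding; continuity of the slicings in the metric can be checked directly from the continuity of $Z_{\alpha,\beta}$.

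The hardest point is the support property in the region $\Phi_{S,H}(\beta)<\alpha\leq\beta^2/2$ below the parabola, which is nonempty precisely when the Bogomolov bound is not sharp, and where $\overline{\Delta}_H$ is no longer negative definite on $\Ker Z_{\alpha,\beta}$. Here one must exploit that $\Phi_{S,H}$ is the \emph{sharp} bound: its upper semi-continuity and the classification of semistable classes force the numerical classes $\lambda_H(E)$ of $\sigma_{\alpha,\beta}$-semistable objects to stay projectively bounded away from $\Ker Z_{\alpha,\beta}$, from which a valid (in general $(\alpha,\beta)$-dependent) support quadratic form can be manufactured. Converting the definition of the Le Potier function into a uniform inequality $\|\lambda_H(E)\|\leq C\,\abs{Z_{\alpha,\beta}(E)}$ over all semistable $E$ is the technical core of the argument.
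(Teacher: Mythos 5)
Your overall scaffolding matches the standard argument that the paper imports by citation: the stability-function check with the boundary case at slope exactly $\beta$ handled by $\ch_2(F)\leq \Phi_{S,H}(\beta)H^2\rk(F)<\alpha H^2\rk(F)$ (Proposition \ref{defp:lpf}), the HN property via Noetherianity and discreteness of $\Lambda_H$, geometricity via simplicity of $\cO_p$ in $\Coh^\beta(S)$, and the embedding via injectivity of $(\alpha,\beta)\mapsto Z_{\alpha,\beta}$ together with Theorem \ref{thm:spaceasamfd}. But there is a genuine gap exactly where this theorem goes beyond the classical region: the support property for $\Phi_{S,H}(\beta)<\alpha\leq \beta^2/2$. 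You correctly identify this as the crux, but what you offer there is circular rather than a proof: the ``uniform inequality $\|\lambda_H(E)\|\leq C\abs{Z_{\alpha,\beta}(E)}$ over all semistable $E$'' \emph{is} the support property, and ``classes projectively bounded away from $\Ker Z_{\alpha,\beta}$'' is its standard reformulation, so declaring that a valid quadratic form ``can be manufactured'' from upper semi-continuity leaves the one new idea of the theorem unproven. (This also contradicts your earlier assertion that the boundary case of the stability function is the \emph{unique} place where $\alpha>\Phi_{S,H}(\beta)$ enters; it enters again here.)

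The paper closes this in Remark \ref{rem:familygeo}: using upper semi-continuity of $\Phi_{S,H}$, one chooses a quadratic polynomial $q(x)=\delta(x-\beta_0)^2+c$ dominating $\Phi_{S,H}$ on all of $\R$ with vertex value $c$ strictly between $\Phi_{S,H}(\beta_0)$ and $\alpha_0$; equivalently, every $H$-semistable sheaf satisfies a Bogomolov-type inequality $\delta\left(H\ch_1^{\beta_0}(F)\right)^2-H^2\rk(F)\left(\ch_2(F)-c\,H^2\rk(F)\right)\geq 0$. This quadratic form is negative definite on $\Ker Z_{\alpha_0,\beta_0}=\R\cdot(1,\beta_0,\alpha_0)$ precisely because $c<\alpha_0$, and then the general tilting machinery of \cite[Section 2]{Dulip-Toda} runs verbatim with this form in place of $\overline{\Delta}_H$, yielding the support property \emph{and} the continuity of $\Sigma$ at $(\alpha_0,\beta_0)$ (which you also left at the level of ``can be checked directly''). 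One detail worth internalizing, since it shows this step is not automatic: because $\Phi_{S,H}(m)=m^2/2$ at every integer $m$ (witnessed by the line bundles $\cO(mH)$), any parabola dominating $\Phi_{S,H}$ globally must have leading coefficient $\delta\geq \tfrac12$; upper semi-continuity only controls a neighborhood of the vertex, while the tails are handled by $\Phi_{S,H}(x)\leq \tfrac{x^2}2$ and steepness. In particular the steepness $\delta$ and the vertex value $c$ must be chosen independently of each other --- a flat parabola with vertex just below $\alpha_0$ fails at large integer slopes --- which is exactly the kind of quantitative point your sketch would need to confront before the ``manufactured'' form exists.
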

\begin{Rem}\label{rem:familygeo}

The above statement is slightly stronger than \cite[Theorem 6.10]{Emolo-Benjamin:lecture-notes}, in the sense that we include stability conditions $\sigma_{\alpha,\beta}$ for some $\alpha\leq \frac{\beta^2}2$ when $\Phi_{S,H}(x)\neq \frac{x^2}2$. We briefly explain why the construction still works.

For every pair $(\alpha_0,\beta_0)$ satisfying $\alpha_0>\Phi_{S,H}(\beta_0)$, since $\Phi_{S,H}$ is upper semi-continuous and not greater than $\frac{x^2}2$, there exists sufficiently small $\delta> 0$  satisfying
$$\delta^{-1}(x-\beta_0)^2+\alpha_0-\delta\geq \Phi_{S,H}(x)$$ for every $x\in \R$. In other words, every $\mu_H$-semistable coherent sheaf $F$ satisfies the  Bogomolov type inequality:
$$\delta^{-1}\left(H\ch_1(F)-\beta_0 H^2\rk(F)\right)^2-H^2\rk(F)\left(\ch_2(F)-(\alpha_0-\delta)H^2\rk(F)\right)\geq 0.$$
By exactly the same argument as that for \cite[Theorem 6.10]{Emolo-Benjamin:lecture-notes} (or see
\cite[Section 2]{Dulip-Toda} for a more general set-up), the pair $\sigma_{\alpha_0,\beta_0}$ is a stability condition and the embedding $\Sigma$ is continuous at $(\alpha_0,\beta_0)$.

Note that skyscraper sheaves are all simple in $\Coh^{\beta_0}(S)$ by definition. They are all $\sigma_{\alpha_0,\beta_0}$-stable with phase $1$. Hence the stability condition $\sigma_{\alpha_0,\beta_0}$ is geometric.
\end{Rem}

\subsection{Geometric stability conditions on surfaces}
\begin{Prop}[cf. {\cite[Section 10]{Bridgeland:K3}}]
\label{prop:geomstabform}
Let $\sigma=(\cP,Z)$ be a geometric stability condition in $\Stab_H^{\mathrm{Geo}}(S)$. Then $\sigma=\sigma_{\alpha,\beta}g$ for some $\alpha>\Phi_{S,H}(\beta)$ and $g\in \glt$.
\end{Prop}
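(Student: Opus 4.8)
The plan is to follow the strategy of \cite[Section 10]{Bridgeland:K3}: first bring the central charge into the standard form $\Zab$ using the $\glt$-action, and then identify the heart $\cP((0,1])$ with the tilted heart $\Coh^\beta(S)$.

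First I would normalize. Since $\sigma$ is geometric (Definition \ref{def:geostab}), all skyscraper sheaves $\cO_p$ are $\sigma$-stable of one common phase, so after acting by a suitable element of $\glt$ I may assume $Z(\cO_p)=-1$, that is $\cO_p\in\cP(1)$. As $\lambda_H(\cO_p)=(0,0,1)$, the functional $\mathrm{Im}\,Z$ on $\Lambda_H\otimes\R$ annihilates $\ch_2$ and hence depends only on $\rk$ and $H\ch_1$; write $\mathrm{Im}\,Z=b_0H^2\rk+b_1H\ch_1$. Every $1$-dimensional torsion sheaf $T$ lies in $\cP((0,1])$ by Lemma \ref{lem:geohearts}(b), whence $\mathrm{Im}\,Z(T)=b_1H\ch_1(T)\ge 0$ and therefore $b_1\ge 0$. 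The inequality is in fact strict: if $b_1=0$, then $\mathrm{Im}\,Z$ would depend only on $\rk$, forcing all $1$-dimensional sheaves to be $\sigma$-semistable of phase $1$; taking line bundles on high-degree curves, whose classes $(0,H\ch_1,\ch_2)$ accumulate after rescaling on $\Ker Z$, would then contradict the support property, since $Q_{\Lambda_H}$ is negative definite on $\Ker Z$ but non-negative on semistable classes. Hence $b_1>0$. The residual freedom is the stabilizer of $-1\in\C$ inside $\GL^+_2(\R)$, a $2$-parameter group; its scaling part lets me rescale to $\mathrm{Im}\,Z=H\ch_1-\beta H^2\rk$ for some $\beta\in\R$, and its shear part lets me cancel the $H\ch_1$-term of $\mathrm{Re}\,Z$. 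Since $\mathrm{Re}\,Z(\cO_p)=-1$, this puts $Z$ exactly in the form $\Zab$ with $\mathrm{Re}\,Z=-\ch_2+\alpha H^2\rk$ for some $\alpha\in\R$.

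Next I would match the hearts, i.e.\ show $\cP((0,1])=\Coh^\beta(S)$. As $\cP((0,1])$ is extension-closed, it suffices to prove the containment $\Coh^\beta(S)=\langle\cT_\beta,\cF_\beta[1]\rangle\subseteq\cP((0,1])$, that is $\cT_\beta\subseteq\cP((0,1])$ and $\cF_\beta\subseteq\cP((-1,0])$. By the Harder--Narasimhan filtration for $\mu_H$-stability and extension-closedness, both reduce to the case of a single $\mu_H$-semistable sheaf $F$; being a sheaf, $F\in\cP((-1,1])$ by Lemma \ref{lem:geohearts}(b), so I only need to locate $\phi^\pm_\sigma(F)$ relative to $0$. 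The sign of $\mathrm{Im}\,Z(F)=H^2\rk(F)(\mu_H(F)-\beta)$ detects this: when $\mu_H(F)>\beta$ I would rule out a $\sigma$-HN factor of phase $\le 0$, and when $\mu_H(F)\le\beta$ one of phase $>0$, by feeding the cohomological amplitude bounds of Lemma \ref{lem:geohearts}(a) into the long exact cohomology sequence of the relevant HN triangle. Once the containment is established, the standard fact that two bounded $t$-structure hearts with $\cA_1\subseteq\cA_2$ must coincide yields $\cP((0,1])=\Coh^\beta(S)$; together with the equality of central charges this gives $\sigma=\sab$.

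Finally, $\alpha>\Phi_{S,H}(\beta)$ is forced by the support property: unwinding Definition \ref{def:lpfunction} and Proposition \ref{defp:lpf}, the requirement that $\Zab$ carry the slicing of a geometric stability condition with heart $\Coh^\beta(S)$ is equivalent to the sharp Bogomolov bound $\alpha>\Phi_{S,H}(\beta)$, exactly as in Theorem \ref{thm:tiltfamily} and Remark \ref{rem:familygeo}. I expect the main obstacle to be the heart identification in the third step: proving $\cT_\beta\subseteq\cP((0,1])$ and $\cF_\beta\subseteq\cP((-1,0])$ requires carefully chasing the cohomology sheaves of the $\sigma$-HN factors of a $\mu_H$-semistable sheaf, which is precisely where Lemma \ref{lem:geohearts} does the real work; the accumulation argument establishing $b_1>0$ is a secondary technical point.
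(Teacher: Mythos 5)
Your first two steps (normalizing $Z$ via $\glt$ and matching $\cP((0,1])$ with $\Coh^\beta(S)$) follow the same route as the paper, which handles the heart identification by citing \cite[Lemma 6.20]{Emolo-Benjamin:lecture-notes}; your torsion-pair chase through Lemma \ref{lem:geohearts} is essentially a sketch of that lemma's proof and is fine in outline. The genuine gap is in your final step. You assert that $\alpha>\Phi_{S,H}(\beta)$ follows "by unwinding" Definition \ref{def:lpfunction} and the support property, "exactly as in Theorem \ref{thm:tiltfamily}". But Theorem \ref{thm:tiltfamily} only gives the implication in one direction (for $\alpha>\Phi_{S,H}(\beta)$ the pair $\sab$ is a stability condition); it does not say that $(\Coh^\beta(S),\Zab)$ with $\alpha\leq\Phi_{S,H}(\beta)$ fails to be one. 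Since $\Phi_{S,H}$ is defined as a $\limsup$ over slopes $\mu\to\beta$, there need not exist any $H$-semistable sheaf $F$ with $\mu_H(F)$ \emph{exactly} $\beta$ violating the bound: a sheaf $F_n$ with $\mu_H(F_n)>\beta$ lies in $\cT_\beta$ with $\Im\Zab(F_n)>0$, and one with $\mu_H(F_n)<\beta$ gives $F_n[1]$ with $\Im\Zab(F_n[1])>0$, so no object of the heart lands in $\R_{>0}$ and no pointwise contradiction arises at $\sigma$ itself; nor do these sheaves accumulate on $\Ker\Zab$ (their normalized classes tend to $(1,\beta,c)$ with $Z$-value $\alpha-c<0$), so the support property is not directly violated either. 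The paper's proof needs a real extra idea here: by \cite[Proposition 5.27]{Emolo-Benjamin:lecture-notes} skyscraper stability is open, so a whole neighbourhood $U$ of $\sigma$ consists of geometric stability conditions; by Theorem \ref{thm:spaceasamfd} one can deform $Z$ to realize any nearby kernel $(1,\beta_0,\alpha_0)\cdot\R$; the heart identification applied to the deformed condition forces its heart to be $\Coh^{\beta_0}(S)$; and by the $\limsup$ definition one can choose $(\alpha_0,\beta_0)\in W$ so that an actual semistable sheaf $F$ with $H\ch_1(F)=\beta_0 H^2\rk(F)$ and $\ch_2(F)>\alpha_0H^2\rk(F)$ exists, making $Z_{\alpha_0,\beta_0}(F[1])\in\R_{>0}$ a contradiction. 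This deformation argument (which also disposes of the boundary case $\alpha=\Phi_{S,H}(\beta)$, untouched by your proposal) is absent from your write-up and cannot be replaced by definition-unwinding.

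A secondary slip: your argument for $b_1>0$ via classes "accumulating after rescaling on $\Ker Z$" does not work, because when $b_1=0$ the rescaled classes of line bundles $\cO_C(mH)$ converge to $(0,0,1)$, on which $Z$ takes the value $-1\neq 0$, so they do not approach $\Ker Z$ and the support property is not engaged. The correct (and cheaper) contradiction, implicit in the paper's appeal to \cite[Lemma 10.1.c]{Bridgeland:K3}: if $b_1=0$ then every $1$-dimensional torsion sheaf has $\Im Z=0$, hence lies in $\cP(1)$ with $Z\in\R_{<0}$; but $\Re Z(\cO_C(mH))=-\ch_2(\cO_C(mH))+a_1HC\to+\infty$ as $m\to-\infty$. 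This is fixable, but as stated the mechanism you invoke fails.
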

\begin{proof}
Applying an element of $\glt$ one can assume that $Z(\cO_p)=-1$ and $\cO_p\in \cP(1)$ for all $p\in X$. In particular, the central charge is of the form $$-\ch_2+aH\ch_1+bH^2\rk +i (cH\ch_1+dH^2\rk)$$ for some $a,b,c,d\in \R$.

By \cite[Lemma 10.1.c]{Bridgeland:K3}, the torsion sheaf $\cO_C(mH)\in \cP((0,1])$ for every curve $C$ on $S$ and $m\in \Z$. Therefore, the coefficient $c>0$. Applying an element of $\glt$ again, we may assume the central charge is of the form $Z_{\alpha,\beta}$ for some $\alpha,\beta\in \R$. By  \cite[Lemma 6.20]{Emolo-Benjamin:lecture-notes} (note that the argument holds for all $\alpha\in\R$), the heart $\cP((0,1])=\Coh^\beta(S)$. 

Now we only need to show that $\alpha>\Phi_{S,H}(\beta)$. By \cite[Proposition 5.27]{Emolo-Benjamin:lecture-notes}, there is an open neighbourhood $U$ of $\sigma$ in $\Stab_H(S)$ where all skyscraper sheaves are stable. By Theorem  \ref{thm:spaceasamfd}, there is an open neighborhood $W$ of $(\alpha,\beta)$ in $\R^2$ such that for every $(\alpha',\beta')\in W$, there exists a stability condition $\sigma'=(\cP',Z')\in U$ with $$\ker Z'=(1, \beta',\alpha')\cdot \R\subset \Lambda_H\otimes \R.$$

Suppose $\alpha\leq \Phi_{S,H}(\beta)$, by Definition \ref{def:lpfunction}, there exists a $\mu_H$-semistable sheaf $F$ and $(\alpha_0,\beta_0)\in W$ such that $$H\ch_1(F)=\beta_0 H^2\rk(F)\text{ and } \ch_2(F)> \alpha_0 H^2\rk(F).$$

Let $\sigma_0=(\cP_0,Z_0)$ be a stability condition in $U$ with $\ker Z_0=(1, \beta_0,\alpha_0)\cdot \R$. Applying an element of $\glt$ on $\sigma_0$, one can assume that $Z_0=Z_{\alpha_0,\beta_0}$. By  \cite[Lemma 6.20]{Emolo-Benjamin:lecture-notes}, $\cP_0((0,1])=\Coh^{\beta_0}(S)\ni F[1]$. This leads to a contradiction that $Z_{\alpha_0,\beta_0}(F[1])\in \R_{>0}$. Therefore, we must have $\alpha>\Phi_{S,H}(\beta)$.
\end{proof}

\begin{Cor}\label{cor:contractible}
Let $(S,H)$ be a smooth polarized surface such that its Albanese morphism is finite, then $\Stab_H(S)$ is connected and contractible. 
\end{Cor}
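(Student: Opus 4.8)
The plan is to combine the classification of geometric stability conditions on $S$ (Proposition \ref{prop:geomstabform}) with the fact, established in Corollary \ref{cor:albfiniteimplygeom}, that \emph{every} numerical stability condition on a surface with finite Albanese morphism is geometric. Since $S$ has finite Albanese morphism and Picard rank one, we have $\Stab_H(S)=\Stab(S)=\Stab^{\mathrm{Geo}}_H(S)$; that is, the entire stability manifold consists of geometric stability conditions. Thus Proposition \ref{prop:geomstabform} applies to \emph{all} of $\Stab_H(S)$: every $\sigma\in\Stab_H(S)$ can be written as $\sigma=\sigma_{\alpha,\beta}\,g$ for some $(\alpha,\beta)$ with $\alpha>\Phi_{S,H}(\beta)$ and some $g\in\glt$.

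First I would make this description into a homeomorphism. Let $U\coloneqq\{(\alpha,\beta)\in\R^2\mid \alpha>\Phi_{S,H}(\beta)\}$ be the domain of the embedding $\Sigma$ of Theorem \ref{thm:tiltfamily}. The map
\[
U\times\glt\longrightarrow \Stab_H(S),\qquad \bigl((\alpha,\beta),g\bigr)\longmapsto \sigma_{\alpha,\beta}\,g
\]
is surjective by the previous paragraph, and it is continuous since $\Sigma$ is continuous and the $\glt$-action is continuous. To see it is a homeomorphism I would identify the fibres of the $\glt$-action: the only ambiguity is that $\sigma_{\alpha,\beta}g=\sigma_{\alpha',\beta'}g'$ forces $\sigma_{\alpha,\beta}$ and $\sigma_{\alpha',\beta'}$ to lie in the same $\glt$-orbit, and since distinct points of $U$ give central charges with distinct kernels in $\Lambda_H\otimes\R$ (the kernel of $Z_{\alpha,\beta}$ is spanned by $(1,\beta,\alpha)$), normalising as in the proof of Proposition \ref{prop:geomstabform} pins down $(\alpha,\beta)$ uniquely and hence $g$ as well. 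This exhibits $\Stab_H(S)$ as the total space of a trivial $\glt$-bundle over $U$, so
\[
\Stab_H(S)\;\cong\; U\times\glt .
\]

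It then remains to check both factors are connected and contractible. The group $\glt$ is the universal cover of $\GL_2^+(\R)$, which deformation retracts onto $SO(2)$; hence $\glt$ is homeomorphic to $\R^2\times\R$ and is connected and contractible. For the base $U$, the upper region $\{\alpha>\Phi_{S,H}(\beta)\}$ is the strict epigraph of the real function $\Phi_{S,H}$. I would observe that $U$ is vertically convex (with each vertical line meeting it in an open ray $\{\alpha>\Phi_{S,H}(\beta)\}$) and nonempty for every $\beta\in\R$, since $\Phi_{S,H}$ is everywhere finite by Proposition \ref{defp:lpf} (indeed $\Phi_{S,H}(x)\le x^2/2$). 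The straight-line deformation retraction $\bigl((\alpha,\beta),t\bigr)\mapsto \bigl((1-t)\alpha+t\,(\alpha+1),\beta\bigr)$ pushing $\alpha$ upward, followed by a vertical shear to a fixed section, contracts $U$ to a point; connectedness follows a fortiori. As $\Stab_H(S)$ is a product of two connected contractible spaces, it is itself connected and contractible, which is the assertion.

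The step I expect to require the most care is establishing that the parametrisation $U\times\glt\to\Stab_H(S)$ is genuinely a homeomorphism rather than merely a continuous bijection. Surjectivity and contractibility of the factors are clean; the delicate point is that the $\glt$-action on the geometric locus is free with the stated trivial product structure, i.e.\ that the normalisation in Proposition \ref{prop:geomstabform} depends continuously on $\sigma$ and recovers $(\alpha,\beta)$ and $g$ as continuous functions of $\sigma$. Here I would lean on Theorem \ref{thm:spaceasamfd}, which guarantees that the central charge is a local biholomorphism, so that $(\alpha,\beta)$ varies continuously with $\sigma$ through its kernel; the $\glt$-component is then determined continuously by comparing $\sigma$ with $\sigma_{\alpha,\beta}$, giving a continuous inverse.
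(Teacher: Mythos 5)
Your proof is correct and follows essentially the same route as the paper, which likewise combines Corollary \ref{cor:albfiniteimplygeom} with Proposition \ref{prop:geomstabform} to exhibit $\Stab_H(S)$ as a $\glt$-principal bundle over the contractible region $\{(\alpha,\beta)\in\R^2\mid \alpha>\Phi_{S,H}(\beta)\}$; your extra detail on freeness of the action and contractibility of the two factors is just a fleshing-out of the paper's one-line argument. One small fix: the Picard rank one hypothesis you invoke is neither assumed nor needed here (it belongs to Corollary \ref{cor:irregsurf}) --- since $\lambda_H$ factors through $\Kn(X)$, every element of $\Stab_H(S)$ is numerical, so Corollary \ref{cor:albfiniteimplygeom} applies directly without identifying $\Stab_H(S)$ with $\Stab(S)$.
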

\begin{proof}
By Corollary \ref{cor:albfiniteimplygeom}, Proposition \ref{prop:geomstabform}, and \cite[Example 2.12(2)]{LahozRojaz}, $\Stab_H(S)$ is homeomorphic to a $\glt$-principal bundle over $\{(\alpha,\beta)\in\R^2|\alpha>\frac{\beta^2}2\}$.  It follows that $\Stab_H(S)$ is connected and contractible.
\end{proof}
\begin{Cor}\label{cor:irregsurf}
The space of stability conditions of an irregular surface with \nes rank one is connected and contractible.
\end{Cor}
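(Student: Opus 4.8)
The plan is to deduce Corollary~\ref{cor:irregsurf} as a direct specialization of Corollary~\ref{cor:contractible}. The only thing I need to verify is that the hypothesis of Corollary~\ref{cor:contractible} is satisfied: namely, that an irregular surface $S$ of Picard rank one has finite Albanese morphism. Once this is established, the connectedness and contractibility of $\Stab(S)$ follow immediately, because for a surface of Picard rank one the lattice $\Lambda_H$ captures the full numerical Grothendieck group, so $\Stab_H(S) = \Stab(S)$ is the entire stability manifold, as already noted at the start of the section.

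First I would recall the standard dichotomy for the Albanese morphism $a\colon S\to \Alb(S)$ of an irregular surface. Since $q(S)=\dim H^1(S,\cO_S)\neq 0$, the Albanese variety is a nonzero abelian variety and $a$ is nonconstant. The image $a(S)$ is either a curve or a surface. The key step is to rule out the possibility that $a(S)$ is a curve and, when $a(S)$ is a surface, to rule out positive-dimensional fibers, so that $a$ is finite onto its image. The main tool here is the Picard rank hypothesis: if $a$ were not finite, then either $a$ contracts some curve, or $a$ factors through a fibration $S\to C$ over a smooth curve $C$ of genus $\geq 1$. In the fibration case, pulling back a point of $C$ gives a nontrivial divisor class on $S$ that is numerically independent from an ample class (the fiber class has zero self-intersection but is not numerically trivial), contradicting $\rho(S)=1$. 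In the contraction case, a contracted curve likewise produces a class of negative or zero self-intersection independent of the polarization, again contradicting $\rho(S)=1$.

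The heart of the argument is therefore the numerical obstruction coming from Picard rank one. With $\rho(S)=1$, the Néron--Severi group is generated by $H$ up to torsion-free quotient, and every nonzero effective divisor is numerically proportional to $H$, hence has strictly positive self-intersection. A positive-dimensional fiber of $a$, or a curve contracted by $a$, would be an effective curve $D$ with $H\cdot D>0$ but $D^2\leq 0$ (zero for a fiber, negative for a contracted curve), which is incompatible with $D$ being a positive multiple of an ample class. This forces $a$ to have finite fibers, and since a proper morphism with finite fibers is finite, $a$ is finite.

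The step I expect to require the most care is verifying that no such numerically anomalous curve can exist, i.e.\ translating "$a$ not finite" cleanly into "there is an effective curve with nonpositive self-intersection." This is where one must be slightly careful about the two geometric scenarios (image a curve versus image a surface with contracted curves) and about torsion in $\NS(S)$; but both scenarios reduce to the same self-intersection inequality, which the Picard rank one hypothesis forbids. Having ruled out both, finiteness of $a$ follows, and then Corollary~\ref{cor:contractible} applies verbatim to give the result.

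\begin{proof}
Let $S$ be an irregular surface of Picard rank one and $a\colon S\to \Alb(S)$ its Albanese morphism. Since $q(S)\neq 0$, the morphism $a$ is nonconstant. If $a$ were not finite, being proper it would contract some irreducible curve $D\subset S$, giving an effective curve with $D^2\leq 0$. As $\rho(S)=1$, the class of $D$ is a positive rational multiple of the ample generator $H$, so $D^2>0$, a contradiction. Hence $a$ is finite, and the result follows from Corollary~\ref{cor:contractible}, noting that $\Stab_H(S)=\Stab(S)$ for a surface of Picard rank one.
\end{proof}
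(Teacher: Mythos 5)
Your proof is correct and follows essentially the same route as the paper's: both reduce to Corollary~\ref{cor:contractible} by observing that the Picard rank one hypothesis forbids the (nonconstant) Albanese morphism from contracting any curve, hence forces it to be finite, and both note that $\Stab(S)=\Stab_H(S)$ in this case. Your version merely spells out the self-intersection argument ($D^2\leq 0$ for a contracted curve versus $D^2>0$ forced by $\rho(S)=1$) that the paper leaves implicit in the phrase ``since otherwise its Picard rank is at least $2$.''
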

\begin{proof}
The Albanese morphism of an irregular surface $S$ is non-constant. It does not contract any curve, since otherwise the \nes rank of $S$ is at least $2$. Therefore the Albanese morphism is finite. Let $H$ be an ample divisor, then $\Stab(S)=\Stab_H(S)$, which is connected and contractible by Corollary \ref{cor:contractible}.
\end{proof}

\section{Abelian threefold case}
\label{sec:AV3folds}
Let $(A,H)$ be a polarized abelian threefold. In this section, we show that the principal component of the stability manifold of $A$ constructed in \cite[Theorem 9.1]{BMS:stabCY3s} is the whole space $\Stab_H(A)$.

\subsection{Review: stability conditions on abelian threefolds}
We briefly recall the construction of stability conditions on abelian threefolds as that in \cite{Dulip-Antony:II,Dulip-Antony:I} and \cite{BMS:stabCY3s}.

For every $\beta\in \R$, recall from the previous section that we have the heart \[\Coh^\beta(A)\coloneqq \langle \cT_\beta,\cF_\beta[1]\rangle.\]

We will always consider the twisted Chern character $\ch^\beta(F)\coloneqq e^{-\beta H}\ch(F)$. More explicitly, we have
\begin{align*}
  &   \ch_1^\beta = \ch_1-\beta H\rk; \;\;\;\;\;\;\;\;\; \ch_2^\beta = \ch_2-\beta H\ch_1+\frac{\beta^2}2H^2\rk;\\
  & \ch_3^\beta = \ch_3-\beta H\ch_2+\frac{\beta^2}2H^2\ch_1-\frac{\beta^3}6H^3\rk.
\end{align*}
Note that for every object $F$ in $\Coh^\beta(A)$, we have $H^2\ch_1^\beta(F)\geq 0$.

For every $\alpha>0$, we consider the slope function $\mu_{\alpha,\beta}$ on $\Coh^\beta(X)$ defined as 
$$\mu_{\alpha,\beta}(F):=\begin{cases}\frac{H\ch^\beta_2(F)-\frac{\alpha^2}2\rk(F)}{H^2\ch^\beta_1(F)}, & \text{ if $H^2\ch^\beta_1(F)>0$;}\\
+\infty, & \text{ if $H^2\ch^\beta_1(F)=0$.}\end{cases}$$
A non-zero object $F\in \Coh^\beta(A)$ is called $\mu_{\alpha,\beta}$-(semi)stable if for every proper non-zero subobject $E\hookrightarrow F$ in $\Coh^\beta(A)$, one has 
$$\mu_{\alpha,\beta}(E)<(\leq)\mu_{\alpha,\beta}(F/E).$$

Consider the pair of subcategories:
\begin{align*}
    \cT_{\alpha,\beta} \coloneqq \{F\in \Coh^\beta(A)|\text{ any quotient object } F\twoheadrightarrow G \text{ in $\Coh^{\beta}(A)$ satisfies } \mu_{\alpha,\beta}(G)>0\}; \\ \cF_{\alpha,\beta} \coloneqq \{F\in \Coh^\beta(A)|\text{ any non-zero subobject } E\hookrightarrow F\text{ in $\Coh^{\beta}(A)$ satisfies } \mu_{\alpha,\beta}(E)\leq 0\}.
\end{align*}
This is a torsion pair on $\Coh^{\beta}(A)$. We denote the tilted heart as
$$\Coh^{\alpha,\beta}(A)\coloneqq \langle \cT_{\alpha,\beta} ,\cF_{\alpha,\beta} [1]\rangle.$$
For every $b\in\R$ and $a>\frac{\alpha^2}6+\frac{1}2\abs{b}\alpha$, we define the central charge as
$$Z^{a,b}_{\alpha,\beta}\coloneqq -\ch^\beta_3+bH\ch^\beta_2+aH^2\ch^\beta_1+i\left(H\ch^\beta_2-\frac{1}2\alpha^2H^3\rk\right).$$

We will  write $\mu_{\alpha,\beta}^{a,b}\coloneqq -\frac{\Re(Z^{a,b}_{\alpha,\beta})}{\Im(Z^{a,b}_{\alpha,\beta})}$ for the slope function. Now we may summarize the construction of stability conditions on $\Db(A)$ as follows.
\begin{Thm}[{\cite[Section 8 and Theorem 9.1]{BMS:stabCY3s}}]
\label{thm:bms}
Let $(A,H)$ be a polarized abelian threefold. Then for every $\alpha,\beta,a,b\in \R$ satisfying $\alpha>0$ and $a>\frac{1}6\alpha^2+\frac{1}2\abs{b}\alpha$, the pair $\sbb=(\Coh^{\beta}(A),Z^{a,b}_{\alpha,\beta})$ is a stability condition on $\Db(A)$ in $\Stab_H(A)$. Moreover, the map
\begin{align*}
    \Sigma: \{(a,b,\alpha,\beta)\in\R^4|\alpha>0,a>\frac{\alpha^2}6+\frac{1}2\abs{b}\alpha\} & \rightarrow \Stab_H(A) \\
    (a,b,\alpha,\beta) & \mapsto \sbb
\end{align*}
is a continuous embedding. 

Denote $\tilde{\mathfrak P}\coloneqq(\mathrm{im}(\Sigma))\cdot\glt$, then $\mathrm{im}(\Sigma)$ is a slice of the $\glt$-action on $\tilde{\mathfrak{P}}$. The space $\tilde{\mathfrak P}$ is a connected component in $\Stab_H(A)$.
\end{Thm}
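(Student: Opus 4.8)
The plan is to verify the three defining properties of a stability condition and then analyse the resulting family. By the heart-versus-slicing dictionary \cite[Lemma 5.11]{Emolo-Benjamin:lecture-notes} it suffices to produce a bounded heart carrying $\zbb$ as a stability function with the Harder--Narasimhan and support properties; the relevant heart is the double tilt $\Coh^{\alpha,\beta}(A)$, whose construction is designed precisely so that $\Im\zbb = H\ch_2^\beta-\frac12\alpha^2H^3\rk$ is nonnegative on it (this is exactly the numerator of the tilt slope $\mu_{\alpha,\beta}$ at which one tilts a second time). First I would confirm $\Im\zbb\geq 0$ on $\Coh^{\alpha,\beta}(A)$ and identify the boundary locus $\{\Im\zbb=0\}$ with (shifts of) the $\mu_{\alpha,\beta}$-tilt-semistable objects of slope $0$. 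The Harder--Narasimhan property is then formal, following from local-finiteness of $\Im\zbb$ on $\Lambda_H$ and the Noetherian-type finiteness of the tilted heart, exactly as in the surface case recalled in Theorem \ref{thm:tiltfamily}.

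The main obstacle is the support property, which for this central charge is equivalent to a generalised Bogomolov--Gieseker inequality: every $\mu_{\alpha,\beta}$-tilt-semistable $E$ with $\mu_{\alpha,\beta}(E)=0$ must satisfy the cubic bound $\ch_3^\beta(E)\leq \frac{\alpha^2}{6}H^2\ch_1^\beta(E)$. This is precisely what is needed for positivity on the boundary: writing such an $E$ as $E[1]$ inside $\Coh^{\alpha,\beta}(A)$, the bound together with the parameter constraint $a>\frac{\alpha^2}6+\frac12\abs{b}\alpha$ forces $\Re\zbb(E[1])<0$, and the same inequality packages into the quadratic form $Q_{\Lambda_H}$ witnessing the support property. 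To prove the inequality I would exploit the special geometry of $A$. The multiplication-by-$n$ isogenies $[n]\colon A\to A$ are \'etale of degree $n^6$, act on $H^{2k}(A)$ by $n^{2k}$ (so $[n]^*H\equiv n^2H$ numerically), and preserve $\mu_{\alpha,\beta}$-tilt-stability after rescaling $(\alpha,\beta)$. Combined with the triviality of the canonical bundle of $A$ --- which makes the Todd class trivial and Serre duality symmetric of degree three --- one can pull back a putative counterexample, apply Riemann--Roch with no Todd correction, and let $n\to\infty$; the surviving constraint degenerates to the classical Bogomolov discriminant inequality, which already holds at the level of the first tilt. This asymptotic argument is the genuinely hard step, and the only place where \emph{abelian} (rather than merely Calabi--Yau) is used.

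Granting the inequality, the remaining statements are soft. Continuity of $\Sigma$ follows because $\zbb$ depends real-analytically on $(a,b,\alpha,\beta)$ and $\pi_Z$ is a local biholomorphism (Theorem \ref{thm:spaceasamfd}); injectivity is immediate since the coefficients of $\Im\zbb$ recover $\beta$ then $\alpha$, and those of $\Re\zbb$ recover $b$ then $a$, so $\zbb$ alone determines the parameters, giving the embedding. For the slice assertion, I would use that the $\glt$-action is free and alters $\zbb$ only by post-composition with $\GL_2^+(\R)$; the normalisation built into $\zbb$ (unit coefficient on $H\ch_2^\beta$ in the imaginary part and on $-\ch_3^\beta$ in the real part) singles out one representative per orbit, so $\mathrm{im}(\Sigma)$ meets each orbit of $\tilde{\mathfrak P}$ exactly once and transversally. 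Finally $\tilde{\mathfrak P}=\glt\cdot\mathrm{im}(\Sigma)$ is connected, as the continuous image of the connected set $\glt\times\{\alpha>0,\,a>\frac{\alpha^2}6+\frac12\abs{b}\alpha\}$, and it is open by the dimension count $4+4=8=\dim_\R\Stab_H(A)$ via the local homeomorphism $\pi_Z$. To upgrade this to a connected component I would prove closedness: along a convergent sequence in $\tilde{\mathfrak P}$ the uniform support property (a single $Q_{\Lambda_H}$ works throughout the family) keeps the limit a stability condition, and Bridgeland's deformation argument shows its central charge is again of the admissible form up to $\glt$; hence the limit stays in $\tilde{\mathfrak P}$, and an open, closed, connected subset is a connected component.
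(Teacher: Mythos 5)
This theorem is not proved in the paper at all: it is imported wholesale from \cite[Section 8 and Theorem 9.1]{BMS:stabCY3s} as background for Section 4, so the only meaningful comparison is with the Bayer--Macr\`i--Stellari argument itself. Measured against that, your sketch reconstructs the skeleton faithfully: the heart really is the second tilt $\Coh^{\alpha,\beta}(A)$ (the pairing with $\Coh^{\beta}(A)$ in the statement as printed is a slip in the paper, which you silently and correctly fixed --- $\Im \zbb$ is not nonnegative on the single tilt); the crux really is the generalised Bogomolov--Gieseker bound $\ch_3^\beta(E)\leq \frac{\alpha^2}{6}H^2\ch_1^\beta(E)$ for tilt-semistable $E$ with $\mu_{\alpha,\beta}(E)=0$; and BMS do prove it on abelian threefolds via the isogenies $[n]$, \'etale of degree $n^6$, with $[n]^*$ acting by $n^{2k}$ on $H^{2k}(A)$ and preserving tilt-stability after rescaling. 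The open-and-closed strategy for the connected-component claim is likewise the BMS route.

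Three steps, however, are thinner than they can bear. First, continuity of $\Sigma$ does not follow from real-analyticity of $(a,b,\alpha,\beta)\mapsto \zbb$ plus $\pi_Z$ being a local biholomorphism: a set-theoretic section of a local homeomorphism can be discontinuous, so this reasoning is circular; one needs a direct comparison of nearby hearts or the deformation theorem with a locally uniform support constant. Second, the claim that ``a single $Q_{\Lambda_H}$ works throughout the family'' is false: no fixed quadratic form serves all parameters, and BMS work with a continuously varying family (in this paper's notation, $Q_K^\beta=K\Delta_H+\nabla^\beta_H$, which appears in the proof of Proposition \ref{prop:ab3phasebound}); the closedness argument must be run relative to this family, using that the support property with respect to such a form cuts out an open and closed subset of $\pi_Z^{-1}$ of the locus where the form is negative definite on $\ker Z$. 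Third, the asymptotic step is not a degeneration to the classical discriminant inequality: after reducing to $\alpha\to 0$ and $H\ch_2^\beta(E)=0$, the BMS mechanism is a section count --- $\chi(\mathcal O_A,[n]^*E)$ scales like $n^6\ch_3^\beta(E)$ (the Todd class being trivial), while $\hom(\mathcal O_A,[n]^*E)$ and, via Serre duality, $\ext^3$ grow strictly slower, with classical Bogomolov entering only to bound those Hom spaces; the contradiction for $\ch_3^\beta(E)>0$ then forces the inequality. (Similarly, the HN property for the double tilt is not ``formal'': it rests on Noetherianity of $\Coh^{\alpha,\beta}(A)$, which BMS must prove.) You flagged the inequality as the genuinely hard step, correctly --- but as written your proposal is an accurate roadmap of the cited proof rather than a proof of its analytic core.
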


Our goal is to show that  $\tilde{\mathfrak P}$ is actually the unique connected component of $\Stab_H(A)$.

\subsection{Semi-homogeneous vector bundles}
Let $(A,H)$ be a polarized abelian threefold. We will  make use of simple semi-homogeneous vector bundles with $\frac{\mathrm{det}}{\rk}=sH$ for $s\in\Q$. To simplify the notation, we will denote such a simple semi-homogeneous vector bundle as $E_s$.

Write $s=\frac{p}q$ for some coprime integers $p, q$ with $q\in\Z_{>0}$. The Chern characters of $E_s$ are
$$\ch(E_{s}) = \rk(E_{s}) e^{sH}=\rk(E_{s})(1,sH,\frac{s^2}2H^2,\frac{s^3}6H^3).$$

For every object $F\in \Db(A)$, it follows from the Hirzebruch--Riemann--Roch formula that
$$\chi(E_s,F)=\rk(E_s)\ch^s_3(F).$$

Every $E_s$ can be constructed as the push-forward of line bundles via an
isogeny $Y \to X$, see \cite[Theorem 5.8]{Mukai:semi-homogeneous}.

\begin{Lem}\label{lem:trivalepqhom}
Let $(A,H)$ be a polarized abelian threefold. If $s<t$, then $\Hom(E_s,E_t[i])\neq 0$ if and only if $i=0$.
\end{Lem}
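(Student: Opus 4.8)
The plan is to reduce the computation of $\Hom(E_s,E_t[i])=\Ext^i(E_s,E_t)$ to the sheaf cohomology of a single semi-homogeneous bundle, and then to control that cohomology by the sign of its slope. Since $E_s$ is locally free, there is a natural identification $\Ext^i(E_s,E_t)\cong H^i(A,\cG)$, where $\cG\coloneqq E_s^\vee\otimes E_t$. This already forces $\Hom(E_s,E_t[i])=0$ for $i<0$ and $i>3$, so only $i\in\{0,1,2,3\}$ need to be examined.

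The first step is to identify $\cG$. By Mukai's theory \cite{Mukai:semi-homogeneous}, the class of semi-homogeneous bundles is preserved under duals and tensor products: from $t_x^*E_s\cong E_s\otimes P_\xi$ and $t_x^*E_t\cong E_t\otimes P_\eta$ one obtains $t_x^*\cG\cong \cG\otimes P_{\eta-\xi}$ for every $x\in A$, so $\cG$ is again a semi-homogeneous vector bundle. A direct Chern character computation (using $\ch(E_s)=\rk(E_s)e^{sH}$ and the analogous formula for $E_t$) gives $\det(\cG)/\rk(\cG)=(t-s)H$, and the hypothesis $s<t$ makes this slope class ample. Since a semi-homogeneous bundle is $\mu_H$-semistable, the simple semi-homogeneous summands of $\cG$ all have slope class $(t-s)H$.

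The main point, and the step I expect to be the genuine obstacle, is the vanishing $H^i(A,\cG)=0$ for all $i>0$. I would argue this summand by summand, invoking the structural description recalled above (\cite[Theorem 5.8]{Mukai:semi-homogeneous}): each simple summand is of the form $\pi_*L$ for an isogeny $\pi\colon Y\to A$ and a line bundle $L$ on $Y$ whose first Chern class is a positive multiple of the ample class $\pi^*H$ (the positivity reflecting the sign of $t-s$). Because $\pi$ is \'etale over $\C$, the functor $\pi_*$ is exact and $H^i(A,\pi_*L)\cong H^i(Y,L)$, and the latter vanishes for $i>0$ by the Mumford index theorem (equivalently Kodaira vanishing), as $c_1(L)$ is ample. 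Summing over the summands yields $H^{>0}(A,\cG)=0$, hence $\Ext^i(E_s,E_t)=0$ for $i=1,2,3$.

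It remains to see that $i=0$ genuinely occurs. Here I would invoke the Euler characteristic formula already recorded, namely
\[\chi(E_s,E_t)=\rk(E_s)\,\ch^s_3(E_t)=\rk(E_s)\rk(E_t)\frac{(t-s)^3}{6}H^3,\]
which is strictly positive since $s<t$ and $H$ is ample. Combined with the vanishing of all higher cohomology, this gives $\dim\Hom(E_s,E_t)=\chi(E_s,E_t)>0$, so $\Hom(E_s,E_t)\neq 0$. Altogether $\Hom(E_s,E_t[i])\neq 0$ precisely for $i=0$, as claimed. As a consistency check one may note that $\Ext^3(E_s,E_t)\cong\Hom(E_t,E_s)^\vee$ by Serre duality and $K_A\cong\cO_A$, and the right-hand side already vanishes by slope-semistability since $\mu_H(E_t)=t>s=\mu_H(E_s)$.
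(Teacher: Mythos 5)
Your proof is correct, and it takes a genuinely different route from the paper's. The paper stays inside its Bridgeland machinery: by Corollary \ref{cor:EpqStable}, $E_s$ and $E_t$ are stable for the double-tilted stability conditions $\sigma^{a,b}_{\alpha,\beta}$ with $\beta=\frac{s+t}{2}$, $\alpha<\frac{t-s}{2}$, $b=0$, $a=\frac{\alpha^2}{6}+\epsilon$; then $E_t$ and $E_s[2]$ lie in a common heart and, for $\epsilon$ small, the slope comparison $\frac{s}{3}<\frac{t}{3}$ forces $\Hom(E_t,E_s[i])=0$ for $i\leq 2$, after which Serre duality and the same Riemann--Roch count you use give the rest. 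You instead argue classically, reducing to $H^i(A,E_s^\vee\otimes E_t)$ and killing the higher cohomology via Mukai's structure theorem (each simple piece is $\pi_*L$ with $c_1(L)=\pi^*\big((t-s)H\big)$ in $\NS(Y)\otimes\Q$, hence $L$ ample on the isogenous $Y$) together with the index/Kodaira vanishing theorem. One imprecision to repair: a semi-homogeneous bundle need not split as a direct sum of simple ones (it can be an iterated self-extension, as with Atiyah-type bundles), so instead of arguing ``summand by summand'' you should invoke Mukai's filtration statement --- every semi-homogeneous bundle admits a filtration whose graded pieces are simple semi-homogeneous of the same slope --- and propagate the vanishing through the long exact sequences in cohomology; nothing else in your argument changes. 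As for what each approach buys: yours is independent of Corollary \ref{cor:EpqStable}, hence of the geometricity results of Section 2, it works verbatim on a polarized abelian variety of any dimension $n$ (with $(t-s)^nH^n/n!$ replacing the cubic term), and it even yields the exact dimension $\hom(E_s,E_t)=\rk(E_s)\rk(E_t)\frac{(t-s)^3}{6}H^3$; the paper's proof, by contrast, is a three-line corollary of machinery already built, and its technique of comparing phases of the $E_s$'s inside tilted hearts is precisely what gets recycled later, in Proposition \ref{prop:ab3phasebound} and Theorem \ref{thm:ab3space}.
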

\begin{proof}
By Corollary \ref{cor:EpqStable}, both $E_s$ and $E_t$ are $\sigma_{\alpha,\beta}^{a,b}$-stable  for $\beta=\frac{s+t}2$, $\alpha<\frac{t-s}2$,  $b=0$, and $a=\frac{\alpha^2}6+\epsilon$. In particular, $E_t,E_s[2]\in \Coh^{\beta,\alpha}(A)$, and 
$$\mu_{\alpha,\beta}^{\frac{\alpha^2}6+\epsilon,0}(E_s[2])=\frac{s-\beta}3<\frac{t-\beta}3=\mu_{\alpha,\beta}^{\frac{\alpha^2}6+\epsilon,0}(E_t)$$
when $\epsilon$ tends to $0$. Therefore, $\Hom(E_t,E_s[i])=0$ when $i\leq 2$. By Serre duality, $\Hom(E_s,E_t[i])=0$ unless $i=0$. 

By the Hirzebruch--Riemann--Roch formula, we have  $$\chi(E_s,E_t)=\rk(E_s)\rk(E_t)\frac{(t-s)^3}6>0.$$
Therefore, $\Hom(E_t,E_s)\neq 0$.
\end{proof}

Denote by $\cC$ the set of all (shifted) skyscraper sheaves $\cO_p[t]$ and all simple semi-homogeneous vector bundle $E_s[t]$'s. By Corollary \ref{cor:albfiniteimplygeom} and \ref{cor:EpqStable}, all elements in $\cC$ are stable with respect to every stability condition. The following proposition is the key input to our main result.

\begin{Prop}
\label{prop:ab3phasebound}
Let $\sigma=\sbb$ be a stability condition on $\Db(A)$ and $F\in \cP_\sigma(\theta)$. Then
\begin{align*}
    \inf_{E\in\cC}\left\{\theta-\phi_\sigma(E)|\Hom(E,F)\neq 0\right\}\leq 1; \\
     \inf_{E\in\cC}\left\{\phi_\sigma(E)-\theta|\Hom(F,E)\neq 0\right\}\leq 1.
\end{align*}
\end{Prop}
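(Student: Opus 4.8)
The plan is to handle the two inequalities by combining one explicit construction with a duality symmetry. Throughout I use that $\sigma=\sbb$ is geometric (Corollary \ref{cor:albfiniteimplygeom}), that after acting by $\glt$ we may assume $\cO_p\in\cP_\sigma(1)$ for all $p$, and that every $E_s$ and every $\cO_p$ is $\sigma$-stable (Corollary \ref{cor:EpqStable}); I also use Serre duality on the abelian threefold $A$. The first step is to record the cohomological amplitude of $F$: writing $\theta\in(j,j+1]$, the object $F[-j]$ lies in $\cP_\sigma((0,1])$, so Lemma \ref{lem:geohearts}(a) forces $\cH^i(F)=0$ unless $-j-2\le i\le -j$, with $\cH^{-j-2}(F)$ torsion-free.

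For the first inequality I would argue directly. Let $l$ be the smallest degree with $\cG\coloneqq\cH^l(F)\neq 0$; then $l\le -j$, and the canonical truncation gives a map $\iota\colon\cG[-l]\to F$ that is an isomorphism on $\cH^l$. Since $\cG$ is a nonzero coherent sheaf and $E_s^\vee$ becomes arbitrarily ample as $s\to-\infty$, asymptotic Riemann--Roch together with Serre vanishing yields $\Hom(E_s,\cG)=H^0(A,E_s^\vee\otimes\cG)\neq 0$ for $s\ll 0$. Composing a nonzero such map with $\iota$ produces a morphism $E_s[-l]\to F$ that is nonzero, as it is already nonzero on $\cH^l$. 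Because $E_s$ is a sheaf with central charge in the first quadrant for $s\ll 0$, we have $\phi_\sigma(E_s)\in(0,1)$, hence $\phi_\sigma(E_s[-l])=\phi_\sigma(E_s)-l>-l\ge j\ge\theta-1$; on the other hand the existence of a nonzero map to $F$ forces $\phi_\sigma(E_s[-l])\le\theta$ by the slicing axiom. Thus $E_s[-l]\in\cC$ realizes $\theta-\phi_\sigma(E_s[-l])<1$, proving the first inequality.

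For the second inequality I would exploit the derived dualizing functor $(\blank)^\vee=\RlHom(\blank,\cO_A)$, an anti-autoequivalence of $\Db(A)$ sending $E_s\mapsto E_{-s}$ and $\cO_p\mapsto\cO_p[-3]$, so that $\cC^\vee=\cC$, and carrying $\sigma$ to a stability condition $\sigma^\vee$ that lies, up to $\glt$, in the same family $\tilde{\mathfrak P}$ via the duality symmetry of the \cite{BMS:stabCY3s} construction. Under this functor $\Hom(F,E)\cong\Hom(E^\vee,F^\vee)$ and phases are reversed, so the first inequality applied to the $\sigma^\vee$-semistable object $F^\vee$ (of phase $-\theta$) translates verbatim into
\[\inf_{E\in\cC}\left\{\phi_\sigma(E)-\theta\mid\Hom(F,E)\neq 0\right\}\le 1.\]

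The main obstacle is precisely this second inequality, and more specifically the verification that $\sigma^\vee$ remains admissible — geometric, with all $E_s$ and $\cO_p$ stable and in the normalized form needed to invoke Lemma \ref{lem:geohearts} — so that the first-inequality argument applies to $(\sigma^\vee,F^\vee)$; this is where the dualizing symmetry of $\tilde{\mathfrak P}$ is essential. Should a self-contained argument be preferred, I would instead prove the second inequality by choosing a shifted bundle $E_s[t]\in\cC$ of phase slightly above $\theta$ with $\chi(F,E_s[t])\neq 0$: Serre duality on $A$ confines every nonzero $\Ext^i(F,E_s[t])$ to $i\in\{0,1,2\}$, and the identity $\chi(E_s,F)=\rk(E_s)\ch^s_3(F)$, combined with the freedom in $s$ (whose bundles have phases dense in $\R$), lets one fix the sign of the Euler pairing so that the non-vanishing occurs in degree $i\le 1$, yielding a morphism to a $\cC$-object of phase at most $\theta+1$. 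Excluding the possibility that all the $\Ext$ sits in degree $2$ is the delicate point, and it is exactly this degree control that the Euler-characteristic sign is designed to supply.
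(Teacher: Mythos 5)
Your proof of the first inequality breaks on the phase of $E_s$ for $s\ll 0$. The claim ``$E_s$ is a sheaf with central charge in the first quadrant for $s\ll 0$, hence $\phi_\sigma(E_s)\in(0,1)$'' is false: the central charge only determines the phase modulo $2$, and the lift is fixed by the slicing, not by the quadrant. For the double-tilted heart $\cP_\sigma((0,1])=\Coh^{\alpha,\beta}(A)$ one has (as used in the paper's proof of Theorem \ref{thm:ab3space}, via Corollary \ref{cor:EpqStable}, Lemma \ref{lem:trivalepqhom} and Lemma \ref{lem:geohearts}) $E_t\in\cP_\sigma((0,1])$ for $t>\beta+\alpha$, $E_t\in\cP_\sigma((-1,0])$ for $\beta-\alpha<t\leq\beta+\alpha$, and $E_t\in\cP_\sigma((-2,-1])$ for $t\leq\beta-\alpha$; indeed $Z(E_s)=m e^{i\pi\phi}$ with $\phi\in(-2,-\tfrac32)$ still has first-quadrant value. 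Consequently $\phi_\sigma(E_s[-l])\approx -2-l$, and your chain $E_s[-l]\to\cG[-l]\to F$ yields $\theta-\phi_\sigma(E_s[-l])\leq 1$ only in the extreme case $l=-j-2$; if, say, $\theta\in(0,1)$ and $F$ is a $\mu$-stable sheaf sitting in the heart (so $l=0$), your map $E_s\to F$ gives $\theta-\phi_\sigma(E_s)\approx\theta+2>2$, which proves nothing. (Separately, your appeal to Serre vanishing for $\Hom(E_s,\cG)=H^0(E_s^\vee\otimes\cG)\neq 0$ is not justified as stated: $E_s$ is $\pi_*L$ for an isogeny $\pi:Y\to A$ that varies with $s$, so one is computing $H^i(Y,L^\vee\otimes\pi^*\cG)$ on a changing cover, where fixed-ample Serre vanishing does not directly apply.)

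For the second inequality you yourself flag the two possible routes as incomplete, and the admitted gap is exactly where the real work lies. The paper does not use duality or generic density of phases; instead, setting $C=-\cot(\pi\theta)$, it considers the cubic $f(x)=\tfrac{x^3}6+(C-b)\tfrac{x^2}2-ax-\tfrac12\alpha^2C$ with roots $s_1<-\alpha<s_2<\alpha<s_3$, characterized by $\mu(e^{(\beta+s_i)H})=C$: the correspondingly shifted bundles $E_{\beta+s_i}$ then have the \emph{same} $\sigma$-phase as $F$, so stability plus Serre duality confine all $\Hom$'s between $F$ and $E_{\beta+s_i}$ to two adjacent degrees, and the sign of $\chi(F,E_{\beta+s_i})=-\rk(E_{\beta+s_i})\ch_3^{\beta+s_i}(F)$ forces non-vanishing in the degree giving a phase gap $\leq 1$. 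This is precisely the ``degree control'' your fallback sketch leaves open. Moreover, obtaining \emph{both} inequalities requires knowing that among $\{(-1)^{i+1}\ch_3^{\beta+s_i}(F)\}$ both signs occur, which is the content of Lemma \ref{lem:ch3sF} (the three linear forms $L_i$ are linearly dependent with coefficients $(s_3-s_2)^3,(s_2-s_1)^3,(s_3-s_1)^3$), together with separate deformation arguments when $\beta+s_i$ is irrational, when all three values vanish, and when $H^{3-\bullet}\ch_\bullet(F)\in\R\cdot(1,t,\tfrac{t^2}2,\tfrac{t^3}6)$. None of this machinery is replaceable by the asymptotic maps you construct, so as written both halves of your argument have genuine gaps.
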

\begin{proof}
We may assume that $F$ is $\sigma$-stable and is not in $\cC$. Since the set $\cC$ is closed under homological shift, we may also assume $\theta\in(0,1]$. 

We first deal with the case when $\theta\in(0,1)$ and $H^{3-\bullet}\ch_\bullet(F)\not\in\R\cdot(1,t,\frac{t^2}2,\frac{t^3}6)$. Denote by $C\coloneqq\mu(F)=-\cot(\pi\theta)\in\R$ the slope of $F$. It follows that 
\begin{equation}\label{eqch3mu}
    \ch^\beta_3(F)-bH\ch^\beta_2(F)-aH^2\ch^\beta_1(F)-C\left(H\ch^\beta_2(F)-\frac{1}2\alpha^2H^3\rk(F)\right)=0.
\end{equation}
Consider the equation
\begin{equation}
    f(x)\coloneqq\frac{x^3}6-(C+b)\frac{x^2}2-ax+\frac{1}2\alpha^2C=0.
\end{equation}
By the assumption that $a>\frac{1}6\alpha^2+\frac{1}2\abs{b}\alpha$ and $\alpha>0$, we have $$f(\alpha)=\alpha\left(\frac{1}6\alpha^2-\frac{1}2b\alpha-a\right)<0<\alpha\left(a-\frac{1}6\alpha^2-\frac{1}2b\alpha\right)=f(-\alpha).$$
Note that $\lim_{x\rightarrow \pm\infty}f(x)=\pm\infty$, we have $$f(s_j)=0 \text{ for some }s_1<-\alpha<s_2<\alpha<s_3.$$

It is worth noticing that  $f(x)=0$ if and only if $\mu(e^{(\beta+x)H})=C$.

Now assume that  $\ch_3^{\beta+s_1}(F)>0$,  we will show that under this assumption, we have $$\inf_{E\in\cC}\left\{\phi_\sigma(E)-\theta|\Hom(F,E)\neq 0\right\}\leq1.$$ 

\begin{Rem}\textit{
When $\beta+s_1$ is rational, we may simply notice that} $$\chi(F,E_{\beta+s_1})=-\rk(E_{\beta+s_1})\ch_3^{\beta+s_1}(F)<0.$$
\textit{It then follows that  $\Hom(F,E_{\beta+s_1}[3])\neq 0$, which implies the conclusion.}

\textit{As $\beta+s_1$ can be an irrational number, we need to deform the stability condition  so that there exists $E_{\beta+s'_1}$ in $\cC$. Yet this deformation is not completely straightforward, as at certain point we need the assumption that $H^{3-\bullet}\ch_\bullet(F)\not\in\R\cdot(1,t,\frac{t^2}2,\frac{t^3}6)$. The only purpose of next few paragraphs is  to deal with this issue. For the convenience of the readers, it is harmless to skip these paragraphs and simply assume that $E_{\beta+s_i}\in\cC$ exists.}
\end{Rem}

By \cite[Proposition 5.27]{Emolo-Benjamin:lecture-notes}, there is an open neighbourhood $W$ of $(\alpha,\beta,a,b)$ such that $F$ is $\sigma_{\alpha',\beta'}^{a',b'}$-stable for every $(\alpha',\beta',a',b')\in W$. For every $\epsilon>0$, there exists $\delta_0>0$ such that for every $\abs{\delta},\abs{\delta'},\abs{\delta''}<\delta_0$ we have
\begin{itemize}
    \item $(\alpha+\delta'',\beta,a+\delta,b+\delta')\in W$;
    \item $\ch_3^{\beta+s_1+\delta}(F)>0$;
    \item  $\mathrm{dist}(\sbb,\sigma_{\alpha,\beta}^{a+\delta,b+\delta'})<\frac{\epsilon}2$.
\end{itemize}
It follows from  $f(s_1)=0$ that $\mu(e^{(\beta+s_1)H})=C$. Note that the function $B(x)$ satisfying $$\mu_{\alpha,\beta}^{a,B(x)}(e^{(\beta+x)H})=\mu_{\alpha,\beta}^{a,B(x)}(F)$$
is well-defined and continuous when $x\neq \pm \sqrt{\frac{2H\ch^\beta_2(F)}{H^3\rk(F)}}$. When $s_1\neq \pm\sqrt{\frac{2H\ch^\beta_2(F)}{H^3\rk(F)}}$, $B(s_1)=b$. There exists $\abs{\delta}<\delta_0$ such that $E_{\beta+s_1+\delta}\in\cC$ and $\abs{B(s_1+\delta)-b}<\delta_0$. 

If $s_1= \pm\sqrt{\frac{2H\ch^\beta_2(F)}{H^3\rk(F)}}$ but $s_1\neq \frac{H^2\ch_1^\beta(F)}{H^3\rk(F)}$, then the  function $A(x)$ satisfying  $$\mu_{\alpha,\beta}^{A(x),b}(e^{(\beta+x)H})=\mu_{\alpha,\beta}^{A(x),b}(F)$$
is well-defined and continuous at a small neighbourhood of $x=s_1$. There exists $\abs{\delta}<\delta_0$ such that $E_{\beta+s_1+\delta}\in\cC$ and $\abs{A(s_1+\delta)-a}<\delta_0$. 

If $s_1= \pm\sqrt{\frac{2H\ch^\beta_2(F)}{H^3\rk(F)}}= \frac{H^2\ch_1^\beta(F)}{H^3\rk(F)}$, since $H^{3-\bullet}\ch_\bullet(F)\not\in\R\cdot(1,t,\frac{t^2}2,\frac{t^3}6)$, we have $s_1^3\neq \frac{6\ch_3^\beta(F)}{H^3\rk(F)}$. It follows that the positive function  $P(x)$ satisfying  $$\mu_{P(x),\beta}^{a,b}(e^{(\beta+x)H})=\mu_{P(x),\beta}^{a,b}(F)$$
is well-defined  and continuous at a small neighbourhood of $x=s_1$ (as the denominator of $\frac{1}2(P(x))^2$ is $\frac{s^3_1}6H^3\rk(F)-\ch_3^\beta(F)$ at $x=s_1$). There exists $\abs{\delta}<\delta_0$ such that $E_{\beta+s_1+\delta}\in\cC$ and $\abs{P(s_1+\delta)-\alpha}<\delta_0$.\\

In any case, we may assume there exists $E_{\beta+s'_1}\in \cC$ and $(\alpha',\beta,a',b')$ such that
\begin{itemize}
    \item $s'_1<-\alpha'$, $E_{\beta+s'_1}[2]\in\cP_\sigma((0,1])$ and $\ch_3^{\beta+s'_1}(F)>0$;
    \item $F$ is $\sigma_{\alpha',\beta}^{a,b}$-stable and $\mu_{\alpha',\beta}^{a',b'}(F)=\mu_{\alpha',\beta}^{a',b'}(E_{\beta+s'_1})$;
    \item $\abs{\phi_\sigma(E_{\beta+s'_1}[2])-\phi_\sigma(F)}<\epsilon$.
\end{itemize}

Note that $\phi_{\alpha',\beta}^{a',b'}(F)=\phi_{\alpha',\beta}^{a',b'}(E_{\beta+s'_1}[2])$ and $F\neq E_{\beta+s'_1}[2]$, we have
$$\Hom(E_{\beta+s'_1},F[t])=(\Hom(F,E_{\beta+s'_1}[3-t]))^*=0$$
for all $t\neq 0,-1$. Therefore,
\begin{align*}
    & \Hom(F,E_{\beta+s'_1}[4])-\Hom(F,E_{\beta+s'_1}[3]) \\ = & \chi(F,E_{\beta+s'_1})=-\rk(E_{\beta+s'_1})\ch^{\beta+s'_1}(F)<0.
\end{align*}
It follows that $\Hom(F,E_{\beta+s'_1}[3])\neq 0$. In particular, 
$$\inf_{E\in\cC}\left\{\phi_\sigma(E)-\theta|\Hom(F,E)\neq 0\right\}\leq \phi_\sigma(E_{\beta+s'_1}[3])-\theta< 1+\epsilon.$$
As $\epsilon$ tends to $0$,  we have $\inf_{E\in\cC}\left\{\phi_\sigma(E)-\theta|\Hom(F,E)\neq 0\right\}\leq 1$.\\

By the same argument, if any value of $\{(-1)^{i+1}\ch_3^{\beta+s_i}(F)\}_{i=1,2,3}$ is positive, then $$\inf_{E\in\cC}\left\{\phi_\sigma(E)-\theta|\Hom(F,E)\neq 0\right\}\leq 1.$$

If any value of $\{(-1)^{i+1}\ch_3^{\beta+s_i}(F)\}_{i=1,2,3}$ is negative, then $$\inf_{E\in\cC}\left\{\theta-\phi_\sigma(E)|\Hom(E,F)\neq 0\right\}\leq 1.$$ 

To conclude the argument for the case that $H^{3-\bullet}\ch_\bullet(F)\not\in\R\cdot(1,t,\frac{t^2}2,\frac{t^3}6)$ and $\theta\neq 1$, we need the following property for the values of $\ch_3^{\beta+s_i}(F)$.
\begin{Lem}
\label{lem:ch3sF}
The values $\{(-1)^j\ch^{\beta+s_i}_3(F)\}_{i=1,2,3}$ cannot be all positive (or negative) at the same. Moreover, if all of them are non-negative (or non-positive), then they must be all $0$ and  
$$(H^3\rk(F),H^2\ch_1^\beta(F),H\ch_2^\beta(F),\ch_3^\beta(F))=\lambda\cdot(1, b-C,-a,-\frac{C}2\alpha^2)$$ for some $\lambda\in \R^\times$.
\end{Lem}
The proof for the lemma is  elementary, we postpone it after the proof of this proposition. \\

By Lemma \ref{lem:ch3sF}, the only outstanding case is when $\ch^{\beta+s_i}_3(F)$ are all zero. In this case, for every $\epsilon>0$, we may deform $a$ to an $a'\neq a$ such that
\begin{itemize}
    \item $\mathrm{dist}(\sigma,\sigma_{\alpha,\beta}^{a',b})<\epsilon$;
    \item $F$ is $\sigma_{\alpha,\beta}^{a',b}$-stable and is not in $\cP_{\sigma_{\alpha,\beta}^{a',b}}(1)$.
\end{itemize}
The characters of $F$ cannot satisfy the condition $$(H^3\rk(F),H\ch_2^\beta(F))=\lambda\cdot(1, -a')$$  as that in Lemma \ref{lem:ch3sF} for any $\lambda\in\R^\times$. Therefore, 
\begin{align*}
& \inf_{E\in\cC}\left\{\phi_{\sigma}(E)-\phi_{\sigma}(F)|\Hom(F,E)\neq 0\right\}\\
< & 
\inf_{E\in\cC}\left\{\phi_{\sigma_{\alpha,\beta}^{a',b}}(E)-\phi_{\sigma_{\alpha,\beta}^{a',b}}(F)+2\epsilon\;\middle| \;\Hom(F,E)\neq 0\right\}\leq 1+2\epsilon.
\end{align*}

As $\epsilon$ tends to $0$, we get the inequality  for $\sigma$. The statement holds for all $F\in\cP_\sigma(\theta)$ with $\theta\not\in\Z$ and $H^{3-\bullet}\ch_\bullet(F)\neq\R\cdot(1,t,\frac{t^2}2,\frac{t^3}6)$.\\

In the case that $\theta=1$, if $F$ is a skyscraper sheaf, then the statement hold automatically as $F\in\cC$. Otherwise, $\ch(F)\neq (0,0,0,a)$, we can deform $\sigma$ in any small open neighbourhood so that $F$ is still stable but not with phase $1$. The inequalities in the statement hold.\\

In the case that $H^{3-\bullet}\ch_\bullet(F)=\R\cdot(1,t,\frac{t^2}2,\frac{t^3}6)$ for some $t\in \R$, we have $Q_K^\beta(H^{3-\bullet}\ch_\bullet(F))=0$ for all quadratic form $Q_K^\beta=K\Delta_H+\nabla^\beta_H$. By  \cite[Proposition A.8]{BMS:stabCY3s}, they are stable with respect to all stability conditions as that in Theorem \ref{thm:bms}. In particular, $F[-i]=F'\in \Coh(A)$ where $i=0$ when $t>\beta+\alpha$; $i=1$ when $\beta-\alpha<t\leq\beta+\alpha$; and  $i=2$ when $t\leq\beta-\alpha$. By the same argument as that in Lemma \ref{lem:trivalepqhom}, $\Hom(E_s,F')\neq 0$ when $s<t$; and $\Hom(F',E_s)\neq 0$ when $s>t$. It is clear that both infimums for $F$ in the statement are $0$.
\end{proof}

\begin{proof}[{Proof for Lemma \ref{lem:ch3sF}}]
As $s_i$'s are the solutions to $f(x)=0$, we have
\begin{align}
    s_1+s_2+s_3&=3b-3C ;\\ 
     s_1s_2+s_2s_3+s_3s_1&=-6a ;\\
     s_1s_2s_3&=3\alpha^2C.
\end{align}

Substitute \eqref{eqch3mu} into $\ch^{\beta+s_i}_3(F)$'s, we may replace the term $\ch^\beta_3(F)$ by other terms:
\begin{equation}
    \ch^{\beta+s_i}_3(F)=(b-C-s_i)H\ch^\beta_2(F)+(a+\frac{s_i^2}2)H^2\ch^\beta_1(F) +(\frac{1}2C\alpha^2-\frac{s_i^3}6)H^3\rk(F).
\end{equation}
To simplify the notations, we denote
\begin{equation}\label{eqlines}
    L_i(x,y,z)\coloneqq(b-C-s_i)x+(a+\frac{s_i^2}2)y +(\frac{1}2C\alpha^2-\frac{s_i^3}6)z.
\end{equation}
Substitute $(x,y,z)=(\frac{s^2_i}2,s_i,1)$ into $L_j$'s, we have
\begin{align*}
    L_j(\frac{s^2_i}2,s_i,1) & = (b-C)\frac{s_i^2}2-\frac{s_js^2_i}2+as_i+\frac{s^2_js_i}2+\frac{\alpha^2C}2-\frac{s^3_j}6 \\ 
    & =\frac{s^3_i}6-\frac{s_js^2_i}2+\frac{s^2_js_i}2-\frac{s^3_j}6\\
    & = \frac{1}6 (s_i-s_j)^3.
\end{align*}
Substitute $(x,y,z)=(-a,b-C,1)$ into   $L_i$'s, we have
\begin{align*}
    L_i(-a,b-C,1) & = as_i+\frac{s_i^2}2(b-C)+\frac{1}2C\alpha^2-\frac{s^3_i}6 \\
    & = -\frac{1}6\left((s_1s_2+s_2s_3+s_3s_1)s_i-s_i^2(s_1+s_2+s_3)-s_1s_2s_3+s_i^3\right)\\
    & =0.
\end{align*}
Therefore, $L_i$'s are linear dependant, and we have
$$(s_3-s_1)^3L_2=(s_3-s_2)^3L_1+(s_2-s_1)^3L_3.$$
Since $s_1<s_2<s_3$, the values $\{(-1)^iL_i(x,y,z)\}$ are all non-negative (or non-positive) if and only if $(x,y,z)\in\R\cdot(-a,b-C,1)$.
\end{proof}
\subsection{Stability manifold of abelian threefolds}
Given stability conditions $\sigma_1$ and $\sigma_2$ on $\Db(X)$, we may consider the following generalised metric function on them as that defined in \cite[Section 6]{Bridgeland:Stab}:
$$d(\sigma_1,\sigma_2)\coloneqq\sup_{0\neq E\in \Db(X)}\{\abs{\phi_{\sigma_1}^-(E)-\phi_{\sigma_2}^-(E)},\abs{\phi_{\sigma_1}^+(E)-\phi_{\sigma_2}^+(E)}\}\in [0,+\infty].$$

\begin{Lem}
\label{lem:distsym}
Let $\sigma$ and $\tau$ be two stability conditions, $\delta\in[0,1]$, and $\cD$ be a set of objects such that
\begin{enumerate}
    \item every object $E$ in $\cD$ is both $\sigma$-stable and $\tau$-stable with $\phi_\sigma(E)=\phi_\tau(E)$;
 \item for every object $F\in\cP_\sigma(\theta)$, we have 
\begin{align*}
    \inf_{E\in\cD}\left\{\theta-\phi_\sigma(E)|\Hom(E,F)\neq 0\right\}\leq \delta; \\
     \inf_{E\in\cD}\left\{\phi_\sigma(E)-\theta|\Hom(F,E)\neq 0\right\}\leq \delta.
\end{align*}
\end{enumerate}
Then $d(\sigma,\tau)\leq \delta$.
\end{Lem}
\begin{proof}
For every $\tau$-stable object $F$, we have the distinguished triangle
\begin{equation}\label{eq1}
    E\rightarrow F\rightarrow G\xrightarrow{+},  \end{equation}
    
 where $E=\mathrm{HN}^{\phi^+_\sigma(F)}_{\sigma}(F)$  and $G= \mathrm{HN}^{<\phi^+_\sigma(F)}_{\sigma}(F)$. 
 
 Suppose $\phi^+_\sigma(F)> \phi_\tau(F)+\delta$, then it follows that
 $$\phi_\sigma(E)=\phi_\sigma^+(F)>\max\{\phi_\tau(F)+\delta,\phi^+_\sigma(G)\}.$$
 
 Since $E$ is $\sigma$-semistable, by the assumption, there exists $A\in\cD$ such that
 $$\Hom(A,E)\neq 0 \text{ and } \phi_\tau(A)=\phi_\sigma(A)>\max\{\phi_\tau(F),\phi^+_\sigma(G)-\delta\}.$$
 Apply $\Hom(A,-)$ to \eqref{eq1}, we have an exact sequence 
 $$\dots\rightarrow \Hom(A,G[-1])\rightarrow\Hom(A,E)\rightarrow\Hom(A,F)\rightarrow\dots .$$
 Since both $A$ and $F$ are $\tau$-stable and $\phi_\tau(A)>\phi_\tau(F)$, $\Hom(A,F)=0$. Since $A$ is $\sigma$-stable and $\phi^+_\sigma(G[-1])=\phi^+_\sigma(G)-1\leq \phi^+_\sigma(G)-\delta<\phi_\sigma(A)$, we have $\Hom(A,G[-1])=0$.  This  contradicts to the fact that $\Hom(A,E)\neq 0$ and the exactness of the sequence. Therefore, we must have $\phi^+_\sigma(F)\leq\phi_\tau(F)+\delta$ for every $\tau$-stable object $F$. 
 
 For the same reason, $\phi^-_\sigma(F)\geq\phi_\tau(F)-\delta$ for every $\tau$-stable object $F$. By \cite[Lemma 6.1]{Bridgeland:Stab}, we have $d(\sigma,\tau)\leq \delta$.
\end{proof}

\begin{Lem}
\label{lem:samecentralcharge}
Let $\sigma$ and $\tau$ be two stability conditions with the same central charge. If  $d(\sigma,\tau)\leq 1$, then $\sigma=\tau$.
\end{Lem}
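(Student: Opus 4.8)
The plan is to reduce everything to Lemma \ref{lem:distsym} applied with the extremal value $\delta=0$. Concretely, I would first prove that for every $\sigma$-stable object $E$ one has
$$\phi^-_\tau(E)\leq \phi_\sigma(E)\leq \phi^+_\tau(E).$$
This is precisely the hypothesis of Lemma \ref{lem:distsym} with $\delta=0$, so it yields $d(\sigma,\tau)\leq 0$, i.e. $d(\sigma,\tau)=0$. Once $d(\sigma,\tau)=0$, every nonzero object has the same upper and lower phases with respect to both slicings, whence $\cP_\sigma(\phi)=\{E:\phi^+_\sigma(E)=\phi^-_\sigma(E)=\phi\}=\cP_\tau(\phi)$ for all $\phi$; since the two central charges agree by hypothesis, this forces $\sigma=\tau$. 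So the entire content is concentrated in the displayed two-sided phase estimate.

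The key step is the phase estimate itself, and here the assumption $d(\sigma,\tau)\le 1$ combined with the equality of central charges does all the work. I would fix a $\sigma$-stable object $E$ of phase $\phi$; since $E$ is in particular $\sigma$-semistable, $Z(E)=m\,e^{i\pi\phi}$ for some $m>0$ by definition of a pre-stability condition. Because $d(\sigma,\tau)\le 1$, the $\tau$-HN factors $A_1,\dots,A_k$ of $E$, of strictly decreasing phases $\psi_1>\dots>\psi_k$, satisfy $\psi_1=\phi^+_\tau(E)\le\phi+1$ and $\psi_k=\phi^-_\tau(E)\ge\phi-1$, hence all $\psi_i\in[\phi-1,\phi+1]$. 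Writing $Z(A_i)=m_ie^{i\pi\psi_i}$ with $m_i>0$ and using $\sum_iZ(A_i)=Z(E)$, after replacing $Z$ by $e^{-i\pi\phi}Z$ (pure bookkeeping) I may take $\phi=0$, so that $\sum_i m_i\sin(\pi\psi_i)=\mathrm{Im}\,Z(E)=0$.

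Now I would argue by contradiction. Suppose $\phi^+_\tau(E)=\psi_1<0$. Then every $\psi_i$ lies in $[-1,0)$, so $\sin(\pi\psi_i)\le 0$ for all $i$, with equality only when $\psi_i=-1$. The relation $\sum_i m_i\sin(\pi\psi_i)=0$ then forces every $\psi_i=-1$; as the $\psi_i$ are distinct, this is possible only when $k=1$ and $\psi_1=-1$, in which case $Z(E)=m_1e^{-i\pi}=-m_1<0$, contradicting $Z(E)=m>0$. Hence $\phi^+_\tau(E)\ge\phi$, and the mirror-image argument (assuming all $\psi_i\in(0,1]$) gives $\phi^-_\tau(E)\le\phi$. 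This establishes the phase estimate and completes the reduction of the first paragraph.

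The step I expect to be the most delicate is exactly this boundary analysis in the sharp case $d(\sigma,\tau)=1$ rather than a strict inequality: the phases $\psi_i$ are then permitted to hit the endpoints $\phi\pm1$, so one cannot simply confine all the $Z(A_i)$ to an open half-plane and read off a contradiction from positivity of the masses. The saving feature is the sign flip $e^{\pm i\pi}=-1$, which makes a factor sitting exactly at phase $\phi\pm1$ contribute a central charge pointing opposite to $Z(E)$; once the imaginary parts are forced to cancel, this is incompatible with $Z(E)=m\,e^{i\pi\phi}$. This is the only point where the equality of central charges is genuinely needed, and it is what allows the constant to be the sharp value $1$.
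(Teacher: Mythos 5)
Your proof is correct, and it takes a genuinely different route from the paper's in its second half. The first half overlaps: the paper's opening step also extracts a phase bound from the central charge, observing that if all $\tau$-HN phases of a $\sigma$-stable $F$ lay in $[\phi_\sigma(F)-1,\phi_\sigma(F))$, then $Z(F)$ would lie in the half-open cone of phases $[\phi_\sigma(F)-1,\phi_\sigma(F))$, contradicting $Z(F)=m'e^{i\pi\phi_\sigma(F)}$ --- the same cancellation you phrase as $\sum_i m_i\sin(\pi\psi_i)=0$, and your endpoint analysis at $\phi\mp 1$ (where the sine vanishes and $e^{\mp i\pi}=-1$ flips the charge against $Z(E)$) is the same phenomenon that the half-open cone handles implicitly. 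The divergence is what happens next: the paper only proves the one-sided bound $\phi^+_\tau(F)\geq \phi_\sigma(F)$ this way (plus its mirror for $\tau$-stable objects) and then upgrades to the equalities $\phi^\pm_\tau(F)=\phi_\sigma(F)$ by a homological argument --- the truncation triangle $E\to F\to G$ with $E=\mathrm{HN}^{\phi^+_\tau(F)}_\tau(F)$ and two Hom-vanishings, essentially replaying the mechanism of Lemma \ref{lem:distsym} inline --- before concluding via \cite[Lemma 6.1]{Bridgeland:Stab}. You instead push the central-charge argument to its sharp two-sided form, using that $d(\sigma,\tau)\leq 1$ confines all $\tau$-HN phases of a $\sigma$-stable $E$ to the closed interval $[\phi-1,\phi+1]$, and then delegate every homological step to Lemma \ref{lem:distsym} at the admissible value $\delta=0$ (the lemma's statement allows $\delta\in[0,1]$, and your hypothesis is exactly its hypothesis at $\delta=0$). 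This buys economy: the paper proves Lemma \ref{lem:distsym} immediately beforehand yet does not use it in this proof, whereas you avoid duplicating its Hom-vanishing bookkeeping, and your organization isolates precisely where equality of the central charges is needed. The paper's version, in exchange, is self-contained and never needs the two-sided endpoint analysis. Your closing reduction is also sound: $d(\sigma,\tau)=0$ forces $\phi^\pm_\sigma=\phi^\pm_\tau$ on all nonzero objects, hence $\cP_\sigma(\phi)=\cP_\tau(\phi)$ for all $\phi$, and with equal central charges this gives $\sigma=\tau$.
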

\begin{proof}
For every $\sigma$-stable object $F$, we first show that $\phi^+_\tau(F)\geq \phi_\sigma(F)$. 

Suppose $\phi^+_\tau(F)<\phi_\sigma(F)$, then since $d(\sigma,\tau)\leq 1$, the object $F\in \cP_\tau\big([\phi_\sigma(F)-1,\phi_\sigma(F))\big)$. Therefore,
$$Z_\tau(F)= me^{\pi i\theta}\text{ for some }m>0\text{ and } \theta\in [\phi_\sigma(F)-1,\phi_\sigma(F)),$$
which cannot equal  $Z_\sigma(F)= m'e^{\pi i\phi_\sigma(F)}$ for any $m'>0$. It follows that $\phi^+_\tau(F)\geq \phi_\sigma(F)$. Due to the same argument, for every $\tau$-stable object $E$, $\phi_\sigma^+(E)\geq \phi_\tau(E)$. \\

For every $\sigma$-stable object $F$, we then show that $\phi^+_\tau(F)= \phi_\sigma(F)$. Note that we have the distinguished triangle
\begin{equation}\label{eq2}
    E\rightarrow F\rightarrow G\xrightarrow{+},  \end{equation}
    
 where $E=\mathrm{HN}^{\phi^+_\tau(F)}_{\tau}(F)$  and $G= \mathrm{HN}^{<\phi^+_\tau(F)}_{\tau}(F)$. Suppose $\phi^+_\tau(F)>\phi_\sigma(F)$, then 
 $$\phi^+_\sigma(E)\geq \phi_\tau(E)=\phi^+_\tau(F)>\phi_\sigma(F).$$
 In particular, we have $\Hom(\mathrm{HN}^{\phi^+_\sigma(E)}_{\sigma}(E),F)=0$. 
 
 Since $d(\sigma,\tau)\leq 1$, we have $$\phi^+_\sigma(G[-1])\leq \phi^+_\tau(G)<\phi^+_\tau(F)=\phi_\tau(E)\leq \phi^+_\sigma(E).$$ Therefore, we have $\Hom(\mathrm{HN}^{\phi^+_\sigma(E)}_{\sigma}(E),G[-1])=0$. Apply $\Hom(\mathrm{HN}^{\phi^+_\sigma(E)}_{\sigma}(E),-)$ to \eqref{eq2}, we get the contradiction. Hence, we must have $\phi^+_\tau(F)=\phi_\sigma(F)$.
 
 By the same argument, $\phi^-_\tau(F)=\phi_\sigma(F)$ for every $\sigma$-stable object $F$. By \cite[Lemma 6.1]{Bridgeland:Stab}, $d(\sigma,\tau)=0$. As $\sigma$ and $\tau$ also have the same central charge, they are the same stability condition.
\end{proof}

Now we are ready to prove our main result for the stability conditions on abelian threefolds.

\begin{Thm}\label{thm:ab3space}
Let $(A,H)$ be a polarized abelian threefold and $\sigma=(\cP,Z)$ be a stability condition in $\Stab_H(A)$. Then $\sigma=\sigma^{a,b}_{\alpha,\beta}g$ for some $\alpha>0$, $a>\frac{1}6\alpha^2+\frac{1}2\abs{b}\alpha$, and $g\in \glt$.
\end{Thm}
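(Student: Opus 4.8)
The plan is to realize $\sigma$, up to the $\glt$-action, as one of the model stability conditions $\sbb$ of Theorem \ref{thm:bms}, by first matching central charges and then matching slicings.

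First I would normalize. By Corollary \ref{cor:albfiniteimplygeom} the condition $\sigma$ is geometric, so all skyscraper sheaves are $\sigma$-stable of one common phase; applying a suitable element of $\glt$ I may assume $\cO_p\in\cP(1)$ and $Z(\cO_p)=-1$ for all $p$. As $\cO_p$ has class $(0,0,0,1)$ in $\Lambda_H$, this pins the $\ch_3$-coefficient of $Z$ to $-1$, exactly as in $\zbb$. To determine the remaining coefficients I would use that, by Corollary \ref{cor:EpqStable}, every simple semi-homogeneous bundle $E_s$ ($s\in\Q$) is $\sigma$-stable: the central charges $Z(E_s)=\rk(E_s)\,Z(e^{sH})$ trace the curve $s\mapsto Z(e^{sH})$, and together with the support property this forces $\Im Z$ to be a positive multiple of $H\ch_2^\beta-\tfrac12\alpha^2H^3\rk$ for some $\beta\in\R$ and $\alpha>0$ (the required positivity and discriminant inequalities coming from the support-property quadratic form), and then $\Re Z$ to acquire the shape $-\ch_3^\beta+bH\ch_2^\beta+aH^2\ch_1^\beta$ after absorbing the residual two-parameter gauge, with $a>\tfrac16\alpha^2+\tfrac12\abs{b}\alpha$ again forced by the support property. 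Set $\tau\coloneqq\sbb$ for these parameters, so that (after this normalization) $Z_\sigma=Z_\tau$ and $\tau$ is itself geometric with $\cO_p\in\cP_\tau(1)$.

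It now suffices to prove $\sigma=\tau$. By Lemma \ref{lem:samecentralcharge} this follows once $d(\sigma,\tau)\leq1$, and by Lemma \ref{lem:distsym} applied to the pair $(\tau,\sigma)$ with $\delta=1$ the latter follows once I show, for every $\tau$-stable $F$, that $\phi^+_\sigma(F)\geq\phi_\tau(F)-1$ and $\phi^-_\sigma(F)\leq\phi_\tau(F)+1$. This is where Proposition \ref{prop:ab3phasebound} enters: since $\tau=\sbb$, it applies to $\tau$ and produces (up to an arbitrarily small phase error, which I let tend to zero at the end) objects $E,E'\in\cC$ with $\Hom(E,F)\neq0$, $\phi_\tau(F)-\phi_\tau(E)\leq1$ and $\Hom(F,E')\neq0$, $\phi_\tau(E')-\phi_\tau(F)\leq1$. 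As $E,E'\in\cC$ are also $\sigma$-stable by Corollaries \ref{cor:albfiniteimplygeom} and \ref{cor:EpqStable}, these nonzero morphisms give $\phi^+_\sigma(F)\geq\phi_\sigma(E)$ and $\phi^-_\sigma(F)\leq\phi_\sigma(E')$; the two desired inequalities then follow provided $\phi_\sigma=\phi_\tau$ on all of $\cC$.

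The crux, and the step I expect to be the main obstacle, is therefore to prove $\phi_\sigma(E)=\phi_\tau(E)$ for every $E\in\cC$. For skyscrapers this is the normalization. For the bundles $E_s$, equality of central charges only yields $\phi_\sigma(E_s)\equiv\phi_\tau(E_s)\pmod 2$, and the genuine difficulty is to remove this mod-$2$ ambiguity. I would argue as follows. Since $E_s$ is a coherent sheaf and $\sigma$ is geometric with $\cO_p$ of phase $1$, Lemma \ref{lem:geohearts}(b) (in its $n=3$ form) confines $\phi_\sigma(E_s)\in(-2,1]$. In the model $\tau$ one computes directly that $s\mapsto\phi_\tau(E_s)$ is non-decreasing and sweeps the open interval $(-2,1)$; on the sub-range of $s$ where $\phi_\tau(E_s)\in(-1,0)$, the window $(-2,1]$ leaves no room for either $\phi_\tau(E_s)-2$ or $\phi_\tau(E_s)+2$, so there $\phi_\sigma(E_s)=\phi_\tau(E_s)$ is forced. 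Finally, Lemma \ref{lem:trivalepqhom} shows that both $s\mapsto\phi_\sigma(E_s)$ and $s\mapsto\phi_\tau(E_s)$ are non-decreasing; their difference is an even integer valued in $\{-2,0,2\}$ and is pinned to $0$ on a sub-range, so any jump to $\pm2$ elsewhere would force $\phi_\sigma$ to decrease along increasing $s$, contradicting monotonicity. Hence $\phi_\sigma\equiv\phi_\tau$ on $\cC$. Combining this with the previous paragraph gives $d(\sigma,\tau)\leq1$, and Lemma \ref{lem:samecentralcharge} yields $\sigma=\tau$, i.e. $\sigma=\sbb g$ with $g\in\glt$ the element undoing the normalization.
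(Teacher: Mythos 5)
Your second half is exactly the paper's route and is sound: you match phases on $\cC$, feed Proposition \ref{prop:ab3phasebound} into Lemma \ref{lem:distsym} with the same role assignment (apply it with $\sbb$-stable test objects, conclude $d(\sigma,\sbb)\leq 1$), and finish with Lemma \ref{lem:samecentralcharge}. Your removal of the mod-$2$ ambiguity -- confining $\phi_\sigma(E_s)$ to $(-2,1]$ by Lemma \ref{lem:geohearts}(b), pinning equality on the range where $\phi_{\sbb}(E_s)\in(-1,0)$, and propagating it by the monotonicity coming from Lemma \ref{lem:trivalepqhom} -- is a correct and welcome expansion of a step the paper compresses into one sentence.

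The genuine gap is in the central-charge normalization, which you attribute to the support property; the support property does not force any of the three constraints, and this is where the paper spends roughly half of its proof. Concretely: (i) a central charge with $\Im Z=H\ch_2^\beta+dH^3\rk$ and $d>0$ admits perfectly good quadratic forms negative definite on $\Ker Z$ and nonnegative on the classes $e^{sH}$, so no ``discriminant inequality'' rules it out; the paper excludes it by showing the jump point $\beta'$ of the phases of the $E_t$ must equal $\beta$ -- using injections $E_{\frac{p}q}\hookrightarrow E_{\frac{p+1}q}$ (both push-forwards of line bundles under one isogeny) whose torsion cokernel $F$ lands in $\cP((0,1])$ resp.\ $\cP((-1,0])$ by Lemma \ref{lem:geohearts}(b) but has $\Im Z(F)$ of the wrong sign -- and then by a deformation of $\beta$ producing two stability conditions at distance $\geq 1$ inside an arbitrarily small neighbourhood, a contradiction. (ii) The positivity of the $H\ch_2$-coefficient is likewise proved by a deformation plus the limits $\lim_{t\to\pm\infty}\phi_\sigma(E_t)$, which would violate the Hom-monotonicity of Lemma \ref{lem:trivalepqhom}. (iii) For $a>\frac{\alpha^2}6+\frac12\abs{b}\alpha$: if $a$ is at or below the bound, one of $Z(E_{\beta+\alpha}[1])$, $Z(E_{\beta-\alpha}[2])$ is a nonnegative real number, which is entirely compatible with the support property (a semistable object may have integer phase); the contradiction only materializes once one knows $E_t\in\cP((0,1])$ for $t>\beta+\alpha$, $E_t\in\cP((-1,0])$ for $\beta-\alpha<t\leq\beta+\alpha$, and $E_t\in\cP((-2,-1])$ for $t\leq\beta-\alpha$ -- placements derived from geometricity via Lemma \ref{lem:geohearts} and Corollary \ref{cor:EpqStable}, not from any quadratic form. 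So this step needs the full deformation-and-heart analysis of the paper; as a consolation, carrying it out establishes the slice placements of all $E_t$ along the way, which then hands you $\phi_\sigma=\phi_{\sbb}$ on $\cC$ without your separate mod-$2$ argument.
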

\begin{proof}
Applying an element of $\glt$ one can assume that $Z(\cO_p)=-1$ and $\cO_p\in \cP(1)$ for all $p\in A$. In particular, the central charge is of the form $$Z=-\ch_3+l_1 H\ch_2+l_2H^2\ch_1+l_3H^3\rk +i (m_1H\ch_2+m_2H^2\ch_1+m_3H^3\rk)$$ for some $l_i,m_i\in \R$.\\

We first show that $m_1>0$. Suppose $m_1\leq 0$, then we may deform the stability condition so that $\cO_p\in\cP(1)$ and $m_1<0$. Note that 
$$\lim_{t\rightarrow\pm\infty}\Im(Z(e^{tH}))=-\infty,$$ it follows from Corollary \ref{cor:EpqStable}, Lemma  \ref{lem:geohearts} that all $E_t\in\cC$ are in the heart $\cP((-1,0])$. Note that $$\lim_{t\rightarrow +\infty}-\frac{\Re (Z(e^{tH}))}{\Im (Z(e^{tH}))}=-\infty \text{ and }\lim_{t\rightarrow -\infty}-\frac{\Re (Z(e^{tH}))}{\Im (Z(e^{tH}))}=+\infty, $$
we have $\lim_{t\rightarrow +\infty}(\phi_{\sigma}(E_t))=-1$ and $\lim_{t\rightarrow -\infty}(\phi_{\sigma}(E_t))=0$. This contradicts with Corollary \ref{cor:EpqStable} and Lemma \ref{lem:trivalepqhom}. Therefore, we must have $m_1>0$.\\

Applying an element of $\glt$ we may further assume that the central charge is of the form $$Z=-\ch^\beta_3+b H\ch^\beta_2+aH^2\ch^\beta_1+cH^3\rk +i (H\ch^\beta_2+dH^3\rk)$$ for some $a,b,c,d,\beta\in \R$.

We then show that $d$ must be negative. Suppose $d\geq 0$, we may deform the stability condition so that $d>0$. Note that $\Im(Z(e^{tH}))>0$ for all $t\in\Q$, if follows from Corollary \ref{cor:EpqStable}, Lemma \ref{lem:geohearts} and \ref{lem:trivalepqhom} that there exists $\beta'\in \R\cup\{\pm\infty\}$ such that $E_t\in \cP((-2,-1])$ for all $t<\beta'$ and $E_t\in \cP((0,1])$ for all $t>\beta'$. 

We show that $\beta'$ must be $\beta$ in this case. Suppose $\beta'<\beta$, then there exists $\frac{p}q$ such that $\beta'<\frac{p}q<\frac{p+1}q<\beta$, and both $p$ and $p+1$ are coprime with $q$. Let $E_{\frac{p}q}, E_{\frac{p+1}q}\in\cC$. Since both of them are the push-forward of line bundle from the same isogeny map, there exists an injective map $f:E_{\frac{p}q}\rightarrow E_{\frac{p+1}q}$. Since the sheaf $F\coloneqq\mathrm{coker}(f)$ is the extension of $E_{\frac{p+1}q}$ and $E_{\frac{p}q}[1]$, it must be in $\cP((0,2])$. By Lemma \ref{lem:geohearts} part \emph{(b)}, the coherent sheaf $F$ is in $\cP((0,1])$. We get the contradiction by computing $$\Im(Z(F))=\Im(Z(E_{\frac{p+1}q}))-\Im(Z(E_{\frac{p}q}))=\frac{1}{2q}\left(\frac{2p+1}q-2\beta\right)H^3\rk(E_\frac{p}{q})<0.$$

Suppose $\beta'>\beta$, then there exists $\frac{p}q$ such that $\beta<\frac{p}q<\frac{p+1}q<\beta'$, and both $p$ and $p+1$ are coprime with $q$.  Since the sheaf $F$ is the extension of $E_{\frac{p+1}q}$ and $E_{\frac{p}q}[1]$, it must be in $\cP((-2,0])$. As $F$ is a torsion sheaf, it is in $\cP((-1,0])$ by Lemma \ref{lem:geohearts} part \emph{(b)}.  We get the contradiction by computing $$\Im(Z(F))=\Im(Z(E_{\frac{p+1}q}))-\Im(Z(E_{\frac{p}q}))=\frac{1}{2q}\left(\frac{2p+1}q-2\beta\right)H^3\rk(E_\frac{p}{q})>0.$$

Therefore, $\beta'=\beta$. In particular, for any $s<\beta<t$, we have $\phi_\sigma(E_s)\leq -1<0<\phi_\sigma(E_t)$. We may deform the stability condition in a sufficiently small neighbourhood to another stability condition $\sigma'$ such that $\cO_p\in\cP(1)$ and $\Im Z'=H\ch^{\beta_0}_2+d'H^3\rk$ for some $\beta_0> \beta$ and $d'>0$. By the same argument, we have $\phi_{\sigma'}(E_t)\leq -1<0<\phi_{\sigma}(E_t)$ for any rational number $\beta <t<\beta_0$. Hence, by \cite[Lemma 6.1]{Bridgeland:Stab}, we have $\mathrm{dist}(\sigma,\sigma')\geq 1$, and this leads to the contradiction.\\

Applying an element of $\glt$ we may further assume that the central charge is of the form $$Z=-\ch^\beta_3+b H\ch^\beta_2+aH^2\ch^\beta_1  +i (H\ch^\beta_2-\frac{\alpha^2}2H^3\rk)$$ for some $a,b,\beta\in \R$ and $\alpha>0$.

We now show that $a> \frac{\alpha^2}6+\frac{1}2\abs{b}\alpha$. Suppose $a\leq \frac{\alpha^2}6+\frac{1}2\abs{b}\alpha$, by deforming the stability condition, we may assume  $\alpha,\beta\in \Q$, and $a<\frac{\alpha^2}6+\frac{1}2\abs{b}\alpha$. 
Note that $\Im(Z(E_t))\geq 0$ when and only when $\abs{t-\beta}\geq \alpha$. By Corollary \ref{cor:EpqStable}, Lemma  \ref{lem:geohearts} and \ref{lem:trivalepqhom}, we have $E_t\in \cP((0,1])$ when $t>\beta+\alpha$; $E_t\in \cP((-1,0])$ when $\beta-\alpha<t\leq \beta+\alpha$; and $E_t\in \cP((-2,-1])$ when $t\leq \beta-\alpha$. Note that $\Im(Z(E_{\beta\pm\alpha}))=0$ and 
\begin{align*}
    \Re(Z(E_{\beta+\alpha}[1]))& =  -\rk(E_{\beta+\alpha})H^3\alpha (a+\frac{1}2b\alpha-\frac{\alpha^2}6);\\
    \Re(Z(E_{\beta-\alpha}[2]))& =  -\rk(E_{\beta-\alpha})H^3\alpha (a-\frac{1}2b\alpha-\frac{\alpha^2}6).
\end{align*}
At least one of them is positive, which leads to the contradiction.\\

Finally, applying an element of $\glt$, we may assume that $\cO_p\in\cP(1)$, and $\sigma$ has the same central charge as that of $\sbb$ for some $\alpha>0$ and $a>\frac{\alpha^2}6+\frac{1}2\abs{b}\alpha$. Moreover, by Corollary \ref{cor:EpqStable}, Lemma \ref{lem:trivalepqhom} and \ref{lem:geohearts}, for every object $E\in\cC$, we have $\phi_\sigma(E)=\phi_{\sbb}(E)$. 

By Proposition \ref{prop:ab3phasebound} and Lemma \ref{lem:distsym}, we have $d(\sigma,\sbb)\leq 1$. By Lemma \ref{lem:samecentralcharge}, we have $\sigma=\sbb$.
\end{proof}
\begin{Cor}\label{cor:ab3space}
Let $(A,H)$ be a polarized abelian threefold, then $\Stab_H(A)=\tilde {\mathfrak P}$.
\end{Cor}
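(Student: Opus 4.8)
The plan is to read off the equality directly from Theorem \ref{thm:ab3space} together with the definition of $\tilde{\mathfrak P}$ in Theorem \ref{thm:bms}; there is essentially no new content to establish, since all the difficulty has already been packaged into Theorem \ref{thm:ab3space}. I would prove the two inclusions separately.

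First I would dispatch the easy inclusion $\tilde{\mathfrak P}\subseteq \Stab_H(A)$. This is immediate from the definition $\tilde{\mathfrak P}=\glt\cdot(\mathrm{im}(\Sigma))$: by Theorem \ref{thm:bms} the image of $\Sigma$ consists of stability conditions lying in $\Stab_H(A)$, and the $\glt$-action preserves $\Stab_H(A)$, so every element of the orbit $\tilde{\mathfrak P}$ is again a point of $\Stab_H(A)$. In fact Theorem \ref{thm:bms} already records that $\tilde{\mathfrak P}$ is a connected component of $\Stab_H(A)$, which gives this inclusion for free.

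For the reverse inclusion I would take an arbitrary $\sigma\in\Stab_H(A)$ and invoke Theorem \ref{thm:ab3space} to write $\sigma=\sbb\, g$ for some $\alpha>0$, $b,\beta\in\R$, $a>\frac{1}{6}\alpha^2+\frac{1}{2}\abs{b}\alpha$, and $g\in\glt$. Since $\sbb=\Sigma(a,b,\alpha,\beta)$ lies in $\mathrm{im}(\Sigma)$, acting by $g$ places $\sigma$ in $\mathrm{im}(\Sigma)\cdot\glt=\tilde{\mathfrak P}$. As $\sigma$ was arbitrary, this yields $\Stab_H(A)\subseteq\tilde{\mathfrak P}$, and combining with the first inclusion gives the desired equality $\Stab_H(A)=\tilde{\mathfrak P}$.

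The only point requiring any care is checking that the parameter constraints produced by Theorem \ref{thm:ab3space} match exactly the domain of $\Sigma$ in Theorem \ref{thm:bms}, so that the resulting $\sbb$ genuinely lies in $\mathrm{im}(\Sigma)$ rather than in some enlarged family. But both constraint sets are precisely $\{\alpha>0,\ a>\frac{\alpha^2}{6}+\frac{1}{2}\abs{b}\alpha\}$ by construction, so the match is automatic and there is no real obstacle here — the entire weight of the argument rests on Theorem \ref{thm:ab3space}.
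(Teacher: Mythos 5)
Your proposal is correct and matches the paper exactly: the paper states Corollary \ref{cor:ab3space} without further argument, treating it as an immediate consequence of Theorem \ref{thm:ab3space} combined with the definition $\tilde{\mathfrak P}=\glt\cdot(\mathrm{im}(\Sigma))$ from Theorem \ref{thm:bms}, which is precisely the two-inclusion unpacking you give. Your check that the parameter constraints $\alpha>0$, $a>\frac{\alpha^2}{6}+\frac{1}{2}\abs{b}\alpha$ in Theorem \ref{thm:ab3space} coincide with the domain of $\Sigma$ is the right (and only) detail to verify, and it holds.
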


\subsection{Further questions}

\begin{Ques}
Let $(X,H)$ be a smooth projective variety whose Albanese morphism is finite, then is $\Stab_H(X)$ also contractible?
\label{ques:contracthigh}
\end{Ques}
This is already non-trivial for the threefold case. Unlike Theorem \ref{thm:ab3space} of the abelian threefold case, there are examples of different geometric stability conditions on the projective space with the same central charge. However, we do not expect such examples exist when the Albanese morphism of the variety is finite. In particular, we expect \cite[Conjecture 1.7]{BMS:stabCY3s} to hold in an even stronger sense that $\tilde{\mathfrak P}_n$ is the whole space $\Stab_H(X)$ when the Albanese morphism of $X$ is finite.

\begin{Ques}
\label{ques:geoiffalb}
Let $X$ be a smooth projective variety whose Albanese morphism is not finite. Then does there always exist non-geometric stability conditions?
\end{Ques}
In other words, we expect the finite Albanese morphism condition is necessary and sufficient for all stability conditions being geometric. When $X$ is of dimension one, the answer is affirmative by \cite[Theorem 2.7]{Macri:curves}. When $X$ is of dimension greater than or equal to three, the question is far beyond reach, as even the existence of stability conditions are only  known in few cases, see \cite{BMS:stabCY3s, Zhiji-stabneftan, quintic, yucheng:stabprodvar} for more details.

This makes the surface case the most interesting one, which is nevertheless highly non-trivial.  By gluing stability conditions with respect to the Orlov semiorthogonal decomposition for blow-ups, there are always non-geometric stability conditions on  non-minimal surfaces. One may therefore always assume that the surface is minimal. Among all such surfaces, the most interesting case is when the Albanese morphism of $X$ is trivial, in other words, $H^1(X,\cO_X)=0$. We have the following conjecture for the Le Poitier function of them.\\

\noindent\textbf{Conjecture.} Let $(S,H)$ be a smooth polarized surface with zero irregularity, then the Le Potier function $\Phi_{S,H}$ is not continuous at $0$.\\

Admitting this conjecture, we expect that there always exist stability conditions as that discussed in \cite[Theorem 12.1]{Bridgeland:K3} when $S$ has zero irregularity. We will investigate this direction in a  future project.

\bibliography{all}                      
\bibliographystyle{halpha}  
\end{document}